\theoremstyle{plain}
\newtheorem{lemma}{Lemma}[section]
\newtheorem{prop}[lemma]{Proposition}
\newtheorem{thm}[lemma]{Theorem}
\newtheorem{cor}[lemma]{Corollary}
\theoremstyle{definition}
\newtheorem*{defn}{Definition}
\newtheorem*{rem}{Remark}
\def\ts{\textstyle}
\def\vtsp{\hspace{.1em}}
\def\cal{\mathcal}
\def\A{{\cal A}}
\def\F{{\cal F}}
\def\I{{\cal I}}
\def\J{{\cal J}}
\def\K{{\cal K}}
\def\Koo{\K_{1|1}}
\def\L{{\cal L}}
\def\S{{\cal S}}
\def\frak{\mathfrak}
\def\da{{\frak a}}
\def\db{{\frak b}}
\def\dc{{\frak c}}
\def\dg{{\frak g}}
\def\dh{{\frak h}}
\def\dl{{\frak l}}
\def\dgo{{\frak o}}
\def\osp{\dgo\ds\dgp}
\def\dgp{{\frak p}}
\def\ds{{\frak s}}
\def\dsl{\ds\dl}
\def\dU{{\frak U}}
\def\Bbb{\mathbb}
\def\bC{\Bbb C}
\def\bN{\Bbb N}
\def\bR{\Bbb R}
\def\Roo{\bR^{1|1}}
\def\bZ{\Bbb Z}
\def\la{\langle}
\def\o{\overline}
\def\ot{\otimes}
\def\ra{\rangle}
\def\t{\tilde}
\def\acong{\buildrel\da\over\cong}
\def\bcong{\buildrel\db\over\cong}
\def\ad{\mathop{\rm ad}\nolimits}
\def\Ann{\mathop{\rm Ann}\nolimits}
\def\dim{\mathop{\rm dim}\nolimits}
\def\End{\mathop{\rm End}\nolimits}
\def\ker{\mathop{\rm ker}\nolimits}
\def\mult{\mathop{\rm mult}\nolimits}
\def\oh{{\ts\frac{1}{2}}}
\def\proj{\mathop{\rm proj}\nolimits}
\def\px{\partial_x}
\def\res{\mathop{\rm res}\nolimits}
\def\spanrm{\mathop{\rm span}\nolimits} 
\def\sym{\mathop{\rm sym}\nolimits}
\def\thup{{\mathrm{th}}}
\def\VR{\mathrm{Vec} \vtsp \bR}
\def\VRm{\mathrm{Vec} \vtsp \bR^m}
\def\dog{differential operator}
\def\iff{if and only if}
\def\irr{irreducible}
\def\lsa{Lie superalgebra}
\def\lwv{lowest weight vector}
\def\r{representation}  
\def\tdm{tensor density module}
\def\tfm{tensor field module}
\def\uea{universal enveloping algebra}
\def\vf{vector field}
\title[Vector field annihilators]{An approach to annihilators in the context of vector field Lie algebras}
\author{Charles H.\ Conley}
\address{Department of Mathematics \\University of North Texas \\Denton TX 76203, USA} 
\email{conley@unt.edu}
\author{William Goode}
\address{Department of Mathematics \\Vanderbilt University \\Nashville TN 37240, USA} 
\email{william.m.goode@vanderbilt.edu}
\thanks{The first author was partially supported by Simons Foundation Collaboration Grant 519533.}
\begin{document}

\begin{abstract}
We present a general method for describing the annihilators of modules of Lie algebras under certain conditions, which hold for some tensor modules of vector field Lie algebras.  As an example, we apply the method to obtain an efficient proof of previously known results on the annihilators of the bounded irreducible modules of $\VR$.
\end{abstract}

\maketitle

\thispagestyle{empty}

\subsubsection*{Dedication}
As a student and a ``grandstudent'' of V.~S.~Varadarajan, we dedicate this article to his memory.  To those of us who were his students, Raja was both teacher and friend.  His generosity and his passion for doing mathematics elegantly have been a lasting inspiration, which we have sought to communicate forward to his academic descendants.

\tableofcontents

\section{Introduction} \label{Intro}
Suppose that $\pi: \dg \to \End(V)$ is a \r\ of a Lie algebra $\dg$ on a vector space $V$.  Write $\pi|_{\dU(\dg)}: \dU(\dg) \to \End(V)$ for its extension to a \r\ of the \uea\ $\dU(\dg)$.

\begin{defn}
The \textit{annihilator} of $\pi$ is the kernel of $\pi|_{\dU(\dg)}$, a two-sided ideal in $\dU(\dg)$.  We denote it by $\Ann_\dg (V)$:
\begin{equation*}
   \Ann_\dg (V) := \ker \big( \pi|_{\dU(\dg)} \big).
\end{equation*}
\end{defn}

In this article we present a method for describing $\Ann_\dg (V)$ under certain conditions on $\dg$ and $\pi$.  It is completely elementary, relying solely on linear algebra.  The basic version applies only to annihilators generated in a single degree, but it can be modified to apply to annihilators generated in multiple degrees.  The conditions are restrictive, but they are satisfied by the \irr\ admissible modules of the Lie algebra of \vf s on the line, $\VR$.  Moreover, it may be that they are satisfied by a reasonably general class of modules of \vf\ Lie algebras, one which includes the tensor field modules of $\VRm$.

The \irr\ admissible modules of $\VR$ (as well as those of the Virasoro Lie algebra) were classified by Mathieu \cite{Ma92}.  They are the \tdm s, which are deformations of the standard module of $\VR$ on the functions $\bC[x]$.  The single degree version of our method applies to $\bC[x]$, and also to the direct sum of all of the \tdm s.  It shows that the annihilators of both of these modules are generated in degree~$2$.  Note that the annihilator of the direct sum is simply the intersection of the annihilators of all of the \tdm s, and so by Mathieu's result it may be thought of as the ``admissible Jacobson radical'' of $\dU(\VR)$.

These two annihilators were originally considered by Martin and the first author, using less efficient techniques \cite{CM07}.  They observed that with the description of the admissible Jacobson radical in hand, it is easy to describe the annihilators of each of the individual \tdm s: those other than that of $\bC[x]$ itself turn out to be generated in degrees $2$ and~$3$.  We will see that the multidegree version of our method gives a way to deduce these annihilators directly.

The second author wrote his dissertation on the analogous project for $\Koo$, the \lsa\ of contact \vf s on the superline $\Roo$ \cite{Go23}.  There the techniques of \cite{CM07} were used.  In a forthcoming article we will use our method to present more efficient proofs of the results.

The indecomposable extensions composed of two \tdm s of $\VR$ were classified by Feigin and Fuchs \cite{FF80}, and projectively split uniserial extensions of arbitrary length were analyzed by O'Dell (another grandstudent of Raja's) \cite{O'D18}.  The annihilators of the simplest class of projectively split extensions were studied by Kenefake (also a grandstudent of Raja's) \cite{Ke19}.  He was not able to describe them fully, but he did describe their intersections with the subalgebra of $\dU(\VR)$ spanned by \lwv s.  We expect that complete descriptions of these annihilators, and perhaps also of the annihilators of more complicated extensions of length~$2$, may be obtained using the multidegree method.  We plan to work with Kenefake to address these questions in a future article.

In order to place our results in some context, recall that by a well-known theorem of Duflo, the annihilators of the Verma modules of a finite dimensional semisimple Lie algebra are generated by their intersection with the center of its \uea\ \cite{Du71}.  The \uea s of vector field algebras such as $\VRm$ and $\K_{(2m+1) | n}$ have no center, so they cannot admit exact analogs of Duflo's result.  However, each of them contains a distinguished finite dimensional semisimple subalgebra, its \textit{projective subalgebra}, and the \tfm s of the parent vector field algebra restrict to relative co-Verma modules of the projective subalgebra, induced from a specific maximal parabolic subalgebra.

For $\VR$ and $\Koo$, the projective subalgebras are $\dsl_2$ and $\osp_{1|2}$, respectively: the rank~$1$ cases.  Here all \tfm s are \tdm s, and they restrict to co-Verma modules of the projective subalgebra.  Therefore, their annihilators over the projective subalgebra are generated by shifts of the Casimir element (except in the case of the self-opposite-dual Verma module of $\osp_{1|2}$; its annihilator is generated by the ``ghost'', the square root of a shift of the Casimir element \cite{Pi90}).  These shifted Casimir elements play roles in the study of the full annihilators over the vector field algebra, although they do not generate them.

This article is organized as follows.  In Section~\ref{Notation} we establish general notation, and in Section~\ref{Single} the single degree method is given.  In Sections~\ref{VecR} and~\ref{Tensors} we review $\VR$, its universal enveloping algebra, and its modules, and in Sections~\ref{U2} and~\ref{F_0 F_Lambda} the single degree method is applied to the two modules of $\VR$ discussed above.  In Section~\ref{Multiple} we give the multidegree method, and in Sections~\ref{U3} and~\ref{F_lambda} it is applied to the individual \tdm s of $\VR$.  Section~\ref{Remarks} contains remarks on further applications, and Section~\ref{Related} discusses relations to work of Sierra and Walton \cite{SW14, SW16}, Billig and Futorny \cite{BF16}, and Petukhov and Sierra \cite{PS20, SP23}.

\section{Notation} \label{Notation}
Throughout the article we work over $\bC$.  Given any vector space $W$, let $\S(W)$ be its symmetric algebra, and let $\S^k(W)$ be the $k^\thup$ homogeneous component of $\S(W)$.  If $W'$ is a subspace of $W$, we write $\proj_{W'}$ for the algebra epimorphism induced by the canonical projection from $W$ to $W/W'$:
\begin{equation*}
   \proj_{W'}: \S(W) \twoheadrightarrow \S(W/W').
\end{equation*}

Let $\dg$ be a Lie algebra, with \uea\ $\dU(\dg)$.  Write $\dU_k(\dg)$ for the subspace of $\dU(\dg)$ of elements of degree~$\le k$, and
\begin{equation*}
   \proj_k: \dU_k(\dg) \twoheadrightarrow \S^k(\dg)
\end{equation*}
for the canonical projection with kernel $\dU_{k-1}(\dg)$.  The graded commutativity of $\dU(\dg)$ is the following homomorphic property: for $\Theta_1 \in \dU_{k_1}(\dg)$ and $\Theta_2 \in \dU_{k_2}(\dg)$,
\begin{equation*}
   \proj_{k_1}(\Theta_1) \proj_{k_2}(\Theta_2) = \proj_{k_1 + k_2}(\Theta_1 \Theta_2).
\end{equation*}

The \textit{symmetrizer map} in degree~$k$ is
\begin{equation*}
   \sym_k: \S^k(\dg) \to \dU_k(\dg), \qquad
   \sym_k(X_1 \cdots X_k) := {\ts\frac{1}{k!}}
   \sum_{\sigma \in S_n} X_{\sigma(1)} \cdots X_{\sigma(n)}.
\end{equation*}
It is a $\dg$-map and a right-inverse of $\proj_k$.  It follows that the full symmetrizer map $\sym$ is a degree-preserving $\dg$-isomorphism:
\begin{equation*}
   \sym := \bigoplus_k \sym_k: \S(\dg) \to \dU(\dg).
\end{equation*}

\begin{defn}
Given any subspace $I$ of $\dU(\dg)$, let $\la I \ra$ be the ideal it generates:
\begin{equation*}
   \la I \ra := \dU(\dg) I \vtsp \dU(\dg).
\end{equation*}
\end{defn}

\begin{defn}
Let $\I$ be an ideal in $\dU(\dg)$.  We say that $\I$ is generated in degree~$d$ if
\begin{itemize}
\item
$\I \cap \dU_{d-1}(\dg) = 0$;
\smallbreak \item
$\I \cap \dU_d(\dg)$ generates $\I$.
\end{itemize}

More generally, we say that $\I$ is generated in degrees $d_1 < \cdots < d_r$ if
\begin{itemize}
\item
$\I \cap \dU_{d_1-1}(\dg) = 0$;
\smallbreak \item
for $1 < s \le r$, $\big\la \I \cap \dU_{d_{s-1}}(\dg) \big\ra$ contains $\I \cap \dU_{d_s -1}(\dg)$, but not $\I \cap \dU_{d_s}(\dg)$;
\smallbreak \item
$\I \cap \dU_{d_r}(\dg)$ generates $\I$.
\end{itemize}
\end{defn}

\begin{defn}
The \textit{augmentation ideal} $\dU^+(\dg)$ is the annihilator of the trivial module:
\begin{equation*}
   \dU^+(\dg) := \Ann_\dg(\bC) = \la \dg \ra
   = \sym \big( \dg \oplus \S^2(\dg) \oplus \S^3(\dg) \oplus \cdots \big).
\end{equation*}
\end{defn}

\begin{rem}
Suppose that $I$ is an adjoint-invariant subspace of $\dU(\dg)$.  Then the left, right, and two-sided ideals which it generates are all the same.  However, it may nevertheless happen that the minimal degree occurring in $\la I \ra$ is lower than that occurring in $I$.  For example, if $\dg$ is simple, the image of $\sym_2$ generates $\dU^+(\dg)$.
\end{rem}

Given any subspace $U$ of $\dU(\dg)$, we sometimes write $U_k$ for $U \cap \dU_k(\dg)$ and $\o U_k$ for $\proj_k(U_k)$.  We shall remind the reader of this convention when using it.

\section{Annihilators generated in a single degree} \label{Single}

We now give the basic version of our method for describing annihilators.  As noted in the introduction, it applies only under a rather restrictive set of conditions.

\begin{prop} \label{machine}
Let $\pi$ be a \r\ of $\dg$ on a space $V$ such that there exist
\begin{itemize}
\item
a positive integer~$d$;
\smallbreak \item
a subspace $J$ of $\dg$;
\smallbreak \item
a subspace $\J$ of $\dU(\dg)$;
\smallbreak \item
a subspace $I$ of $\Ann_\dg(V) \cap \dU_d(\dg)$;
\end{itemize}
satisfying the following conditions:
\begin{enumerate}
\item[(a)]
$\pi: \dU(\dg) \to \End(V)$ is injective on $\J$;
\smallbreak \item[(b)]
$\dU_{d-1}(\dg) \subset \J$;
\smallbreak \item[(c)]
for $k \ge d$, $\proj_k$ maps $\J \cap \dU_k(\dg)$ onto $\S^{k-d+1}(J) \S^{d-1}(\dg)$;
\smallbreak \item[(d)]
$\proj_J \circ \proj_d$ maps $I$ onto $\S^d(\dg/J)$.
\end{enumerate}
Then
\begin{enumerate}
\item[(i)]
$\Ann_\dg(V)$ is generated in degree~$d$.
\smallbreak \item[(ii)]
$\Ann_\dg(V) \cap \dU_d(\dg) = I$.
\smallbreak \item[(iii)]
$\Ann_\dg(V) = \la I \ra$.
\smallbreak \item[(iv)]
$\J$ is a cross-section: $\dU(\dg) = \Ann_\dg(V) \oplus \J$.
\end{enumerate}
\end{prop}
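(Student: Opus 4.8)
The plan is to run the entire argument by induction on the filtration degree, transporting it to the symmetric algebra $\S(\dg)$ via the graded commutativity of $\dU(\dg)$, so that all four conclusions collapse onto the single degree-$d$ hypothesis~(d). Write $A := \Ann_\dg(V)$, and (following the convention at the end of Section~\ref{Notation}) set $A_k := A \cap \dU_k(\dg)$, $\J_k := \J \cap \dU_k(\dg)$, $\o A_k := \proj_k(A_k) \subseteq \S^k(\dg)$, and $\o\J_k := \proj_k(\J_k)$. Put $M_k := \S^{k-d+1}(J)\,\S^{d-1}(\dg) \subseteq \S^k(\dg)$ for $k \ge d$; then (c) says exactly $\o\J_k = M_k$, while $M_d = J\,\S^{d-1}(\dg)$ is the kernel of $\proj_J \colon \S^d(\dg) \twoheadrightarrow \S^d(\dg/J)$, so (d) is equivalent to the identity $\proj_d(I) + M_d = \S^d(\dg)$. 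Two further immediate reductions: since $A = \ker(\pi|_{\dU(\dg)})$, hypothesis (a) says precisely $A \cap \J = 0$; and since $I \subseteq A$ with $A$ a two-sided ideal, $\dU_{k-d}(\dg)\,I \subseteq \la I \ra \subseteq A$, so graded commutativity gives $\proj_k\big(\dU_{k-d}(\dg)\,I\big) = \S^{k-d}(\dg)\,\proj_d(I) \subseteq \o A_k$ for every $k \ge d$.

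The heart of the matter is the spanning statement $\o A_k + M_k = \S^k(\dg)$ for all $k \ge d$, and I expect this bootstrapping --- promoting the degree-$d$ input to all degrees --- to be the main obstacle; once it is in place, the rest is formal. I would prove the sharper-looking assertion $\S^{k-d}(\dg)\,\proj_d(I) + M_k = \S^k(\dg)$ by induction on $k$: the base case $k = d$ is the identity displayed above, and for the inductive step one multiplies the degree-$(k-1)$ assertion by $\S^1(\dg) = \dg$, using $\dg\cdot\S^{k-1-d}(\dg)\,\proj_d(I) = \S^{k-d}(\dg)\,\proj_d(I)$ together with
\begin{equation*}
   \dg\cdot M_{k-1} = \S^{k-d}(J)\,\dg\,\S^{d-1}(\dg) = \S^{k-d}(J)\,\S^d(\dg)
   = \S^{k-d}(J)\big(\proj_d(I) + M_d\big) \subseteq \S^{k-d}(\dg)\,\proj_d(I) + M_k,
\end{equation*}
where the last inclusion holds because $\S^{k-d}(J)\,M_d = \S^{k-d}(J)\,J\,\S^{d-1}(\dg) = M_k$. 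With this in hand, a second induction on degree gives $\dU(\dg) = A + \J$: for $k < d$ one has $\dU_k(\dg) \subseteq \J$ by (b), and for $k \ge d$ the spanning statement, combined with $\o A_k = \proj_k(A_k)$ and $\o\J_k = \proj_k(\J_k)$, lets one subtract from any $\Theta \in \dU_k(\dg)$ an element of $A_k$ and an element of $\J_k$ so as to land in $\dU_{k-1}(\dg)$. Keeping track of degrees, the same induction yields the filtered refinement $\dU_k(\dg) = A_k \oplus \J_k$ for all $k$; together with $A \cap \J = 0$ this proves~(iv).

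With the filtered decomposition available, I would next extract exactness in the symmetric algebra. If $v \in \o A_k \cap M_k$ with $k \ge d$, write $v = \proj_k(a) = \proj_k(j)$ with $a \in A_k$ and $j \in \J_k$ (the latter possible since $\o\J_k = M_k$ by (c)); then $a - j \in \dU_{k-1}(\dg) = A_{k-1} \oplus \J_{k-1}$, and splitting $a - j$ accordingly and using $A \cap \J = 0$ forces $a \in \dU_{k-1}(\dg)$, hence $v = 0$. So $\S^k(\dg) = \o A_k \oplus M_k$ for $k \ge d$; comparing this with $\S^{k-d}(\dg)\,\proj_d(I) \subseteq \o A_k$ and the spanning identity $\S^{k-d}(\dg)\,\proj_d(I) + M_k = \S^k(\dg)$ then forces $\o A_k = \S^{k-d}(\dg)\,\proj_d(I)$ for every $k \ge d$. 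Specializing to $k = d$ gives $\o A_d = \proj_d(I)$, and a short filtration argument --- any $\Theta \in A_d$ has $\proj_d(\Theta) = \proj_d(\Theta_0)$ for some $\Theta_0 \in I$, whence $\Theta - \Theta_0 \in A \cap \dU_{d-1}(\dg) \subseteq A \cap \J = 0$ by (b) --- upgrades this to $A \cap \dU_d(\dg) = I$, which is~(ii).

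For~(iii), the inclusion $\la I \ra \subseteq A$ is immediate from $I \subseteq A$, and the reverse inclusion $A \subseteq \la I \ra$ follows by one final induction on degree: since $\o A_k = \S^{k-d}(\dg)\,\proj_d(I) = \proj_k\big(\dU_{k-d}(\dg)\,I\big)$, every $\Theta \in A_k$ differs from an element of $\dU_{k-d}(\dg)\,I \subseteq \la I \ra$ by an element of $A \cap \dU_{k-1}(\dg) = A_{k-1}$, to which the inductive hypothesis applies. Finally~(i) is the conjunction of $A \cap \dU_{d-1}(\dg) = 0$ (immediate from (b) and $A \cap \J = 0$) with~(ii) and~(iii). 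The only substantive inputs beyond bookkeeping are the graded commutativity of $\dU(\dg)$ and the fact that $A$ is an ideal; hypothesis (d) enters only through the base case of the spanning step, (c) only through the identification $\o\J_k = M_k$, and (a), (b) only through $A \cap \J = 0$ and $\dU_{d-1}(\dg) \subseteq \J$.
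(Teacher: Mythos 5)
Your proposal is correct and follows essentially the same route as the paper: both reduce everything to the graded spanning identity $\S^{k-d}(\dg)\,\proj_d(I) + \S^{k-d+1}(J)\,\S^{d-1}(\dg) = \S^k(\dg)$, proved by the same induction on $k$ with hypothesis~(d) as the base case and re-used in the inductive step. The only difference is that you write out explicitly the concluding deductions (the filtered direct sum, $\o A_k = \S^{k-d}(\dg)\proj_d(I)$, and statements (i)--(iv)) that the paper dispatches with ``the remainder of the proposition follows easily,'' and these are carried out correctly.
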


\begin{proof}
During this proof, we use the following abbreviations:
\begin{equation*}
   \dU_k := \dU_k(\dg), \quad
   \I := \Ann_\dg(V), \quad
   \I_k := \I \cap \dU_k, \quad
   \J_k := \J \cap \dU_k.
\end{equation*}
Applying conditions (a) and~(b), we find that
\begin{equation*}
   \I \cap \J = 0, \qquad
   \I_{d-1} = 0.
\end{equation*}

We first prove that $\dU_d = \I_d \oplus \J_d$ and $I = \I_d$.  The sum is direct, and $I \subseteq \I_d$ by assumption.  Hence by~(b) it suffices to prove that
\begin{equation} \label{P1dEq1}
   \S^d(\dg) = \proj_d(I) + \proj_d(\J_d).
\end{equation}
By (c) and~(d), $\proj_J: \S^d(\dg) \to \S^d(\dg/J)$ has kernel $\proj_d(\J_d)$ and is surjective on $\proj_d(I)$.  This proves the claim.

The next step is to prove that for $k > d$,
\begin{equation} \label{P1dEq2}
   \dU_k = I \vtsp \dU_{k-d} \oplus \J_k.
\end{equation}
Because $I \vtsp \dU_{k-d} \subset \I_k$ and $\I_k \cap \J_k = 0$, we only need $\dU_k = I \vtsp \dU_{k-d} + \J_k$.  Assuming this for $k-1$, applying $\proj_k$ and condition~(c) bring us down to proving
\begin{equation*}
   \S^k(\dg) = \proj_d(I) \S^{k-d}(\dg) + \S^{k-d+1}(J) \S^{d-1}(\dg).
\end{equation*}

The following additional abbreviations clarify the computations:
\begin{equation*}
\dg^\ell := \S^\ell(\dg), \qquad
J^\ell := \S^\ell(J), \qquad
\o I := \proj_d(I).
\end{equation*}
The right side of the equation, $\o I \vtsp \dg^{k-d} + J^{k-d+1} \dg^{d-1}$, may be rewritten as
\begin{equation*}
   \o I (\dg^{k-d} + J \dg^{k-d-1}) + J^{k-d+1} \dg^{d-1}
   = \o I \vtsp \dg^{k-d} + J (\o I \vtsp \dg^{k-d-1} + J^{k-d} \dg^{d-1}).
\end{equation*}
By induction, the term in parentheses is $\dg^{k-1}$, so the expression becomes
\begin{equation*}
   \o I \vtsp \dg^{k-d} + J \dg^{k-1} = (\o I + J \dg^{d-1}) \dg^{k-d}.
\end{equation*}
Here the term in parentheses is $\dg^d$, by~\eqref{P1dEq1} and condition~(c).  This completes the proof of~\eqref{P1dEq2}.  The remainder of the proposition follows easily.
\end{proof}

The following auxiliary lemma will be convenient:

\begin{lemma} \label{consequences}
Assume that conditions (a)-(c) of Proposition~\ref{machine} hold.  Then
\begin{enumerate}
\item[(i)]
$\J \cap \dU_d(\dg) = J \dU_{d-1}(\dg) + \dU_{d-1}(\dg)$.
\smallbreak \item[(ii)]
$\proj_d(I) \cap J \S^{d-1}(\dg) = 0$.
\smallbreak \item[(iii)]
$\proj_J \circ \proj_d$ injects $I$ to $\S^d(\dg/J)$.
\end{enumerate}
\end{lemma}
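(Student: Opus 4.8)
The plan is to squeeze all three parts out of condition~(c) specialized to $k = d$ --- where it reads $\proj_d\big(\J \cap \dU_d(\dg)\big) = J\,\S^{d-1}(\dg)$ --- together with the observation from~(b) that the kernel of $\proj_d$ restricted to $\dU_d(\dg)$, namely $\dU_{d-1}(\dg)$, already sits inside $\J$. The only other ingredient is~(a), which says $\Ann_\dg(V) \cap \J = 0$. So everything is linear algebra in the single graded degree~$d$.

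First I would record the master identity: a subspace $A \subseteq \dU_d(\dg)$ containing $\dU_{d-1}(\dg)$ is determined by its image, $A = \proj_d^{-1}\big(\proj_d(A)\big)$. By~(c) at $k = d$ and~(b), $\J \cap \dU_d(\dg)$ is such a subspace, with image $J\,\S^{d-1}(\dg)$. For part~(i) I would check that the candidate right-hand side $J\,\dU_{d-1}(\dg) + \dU_{d-1}(\dg)$ is again such a subspace with the same image, hence equal to $\J \cap \dU_d(\dg)$. It visibly contains $\dU_{d-1}(\dg)$; graded commutativity gives $\proj_d(X\Theta) = X\,\proj_{d-1}(\Theta)$ for $X \in J$ and $\Theta \in \dU_{d-1}(\dg)$, which puts its image inside $J\,\S^{d-1}(\dg)$, and surjectivity onto $J\,\S^{d-1}(\dg)$ comes from lifting a factor in $\S^{d-1}(\dg)$ back to $\dU_{d-1}(\dg)$ via $\sym_{d-1}$.

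For part~(ii), take $w \in \proj_d(I) \cap J\,\S^{d-1}(\dg)$ and write $w = \proj_d(u)$ with $u \in I \subseteq \Ann_\dg(V)$. Since $w \in J\,\S^{d-1}(\dg) = \proj_d\big(\J \cap \dU_d(\dg)\big)$ and the degree-$d$ kernel of $\proj_d$ lies in $\J$, the master identity forces $u \in \J \cap \dU_d(\dg)$; but $\Ann_\dg(V) \cap \J = 0$ by~(a), so $u = 0$ and $w = 0$. Part~(iii) then follows in a line: the kernel of $\proj_J \colon \S^d(\dg) \to \S^d(\dg/J)$ is $J\,\S^{d-1}(\dg)$, so the kernel of $\proj_J \circ \proj_d$ on $I$ is $\{\, u \in I : \proj_d(u) \in \proj_d(I) \cap J\,\S^{d-1}(\dg) \,\}$, which is $\{\, u \in I : \proj_d(u) = 0 \,\}$ by~(ii), and this vanishes because $I \cap \dU_{d-1}(\dg) \subseteq \Ann_\dg(V) \cap \J = 0$ makes $\proj_d$ injective on $I$ already.

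I do not expect a real obstacle; the lemma is essentially a repackaging of~(c) at $k = d$. The one place that rewards care is the bookkeeping in part~(i): verifying that $J\,\dU_{d-1}(\dg)$ maps \emph{onto}, not merely into, $J\,\S^{d-1}(\dg)$ under $\proj_d$, which is exactly where the right inverse $\sym_{d-1}$ of $\proj_{d-1}$ and the correct degrees in the graded-commutativity identity get used.
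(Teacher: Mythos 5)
Your proposal is correct and follows essentially the same route as the paper: part (i) is read off from conditions (b) and (c) at $k=d$, part (ii) is proved by lifting an element of $\proj_d(I)\cap J\,\S^{d-1}(\dg)$ to both $I$ and $\J\cap\dU_d(\dg)$ and using that the difference lies in $\dU_{d-1}(\dg)\subseteq\J$ together with $\Ann_\dg(V)\cap\J=0$ from (a), and part (iii) reduces to (ii) via $\ker\proj_J\cap\S^d(\dg)=J\,\S^{d-1}(\dg)$. Your explicit "preimage" bookkeeping in (i) is just a fleshed-out version of what the paper leaves to the reader.
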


\begin{proof}
Conditions~(b) and~(c) imply~(i).  For~(ii), suppose that $\o\Theta$ is in $\proj_d(I) \cap J \dg^{d-1}$.  By~(c), $J \dg^{d-1}$ is $\proj_d(\J_d)$, so there are $\Theta_I$ in $I$ and $\Theta_J$ in $\J_d$ both mapping to $\o\Theta$ under $\proj_d$.  The difference $\Theta_I - \Theta_J$ must be in $\dU_{d-1}(\dg)$, which is in $\J_d$.  Since~(a) implies $I \cap \J_d = 0$, we find that $\Theta_I$ and hence $\o\Theta$ are~$0$.

For~(iii), if $\proj_J \circ \proj_d(\Theta) = 0$ for some $\Theta \in I$, then $\proj_d(\Theta)$ is in $J \dg^{d-1}$.  Therefore it is~$0$ by~(ii), so $\Theta$ is in $\dU_{d-1}(\dg)$.  As before, this implies $\Theta = 0$.
\end{proof}

\section{$\VR$ and the tensor density modules} \label{VecR}

In this section we review $\VR$ and its \irr\ admissible \r s; see for example \cite{CM07, Ma92} and their references.  We write $\bN$ for $\bZ_{\ge 0}$.

\begin{defn}
The Lie algebra $\VR$ of polynomial \vf s on the line and its monomial \vf s $e_n$ are
\begin{equation*}
   \VR := \bC[x] \px = \spanrm_\bC \big\{ e_n: n \in -1 + \bN \big\},
   \qquad e_n := x^{n+1} \px.
\end{equation*}
The projective, affine, and constant subalgebras are, respectively,
\begin{equation} \label{abc}
   \da := \spanrm \big\{ e_{-1}, e_0, e_1 \big\}, \qquad
   \db := \spanrm \big\{ e_{-1}, e_0 \big\}, \qquad
   \dc := \bC e_{-1}.
\end{equation}
\end{defn}

Here $\da$ is isomorphic to $\dsl_2$, $\db$ is a Borel subalgebra, and $\dc$ is its nilradical.  The corresponding Cartan subalgebra is spanned by $e_0$, the Euler operator of $\VR$.

\begin{defn}
In any module of $\VR$, the eigenspaces, eigenvectors, and eigenvalues of $e_0$ are, respectively, the \textit{weight spaces}, \textit{weight vectors}, and \textit{weights}.  Weight vectors annihilated by $e_{-1}$ are \textit{\lwv s}.  Given a module $V$, its $\mu$-weight space, the subspace of $\mu$-\lwv s, and the subspace spanned by all \lwv s are denoted, respectively, by
\begin{equation*}
   V_\mu, \qquad V_\mu^{e_{-1}}, \qquad V^{e_{-1}}.
\end{equation*}
A module is said to be \textit{admissible} if it is spanned by its weight spaces, and all of them are finite dimensional.
\end{defn}

The defining module of $\VR$ is $\bC[x]$ with the natural action: a \vf\ $f(x) \px$ maps a polynomial $g(x)$ to $f g'$.  This action extends to the Laurent polynomials $\bC[x, x^{-1}]$, and further to the formal space $x^a \bC[x, x^{-1}]$, for any $a \in \bC/\bZ$.  Deforming it gives the \tdm s:

\begin{defn}
For $\lambda \in \bC$ and $a \in \bC / \bZ$, the \textit{\tdm} $\F_{a, \lambda}$ of $\VR$ and its action $\pi_\lambda$ are given by
\begin{equation*}
   \F_{a, \lambda} := dx^\lambda x^a \bC[x, x^{-1}], \qquad
   \pi_\lambda(f \px) (dx^\lambda g) := dx^\lambda (f g' + \lambda f' g).
\end{equation*}
\end{defn}

Here $dx^\lambda$ is a place-holder symbol of geometric origin.  Note that $dx^\lambda x^\mu$ is a weight vector of weight $\lambda + \mu$.  It is a \lwv\ \iff\ $\mu = 0$.

For $a$ nonintegral, $\F_{a, \lambda}$ is \irr.  For $a \equiv 0$, the polynomials span a submodule, denoted by $\F_\lambda$.  The corresponding quotient, which we will denote by $\F^-_\lambda$, may be thought of as being spanned by the negative monomials:
\begin{equation*}
   \F_\lambda := dx^\lambda \bC[x], \qquad
   \F^-_\lambda := \F_{\lambda, \lambda} / \F_\lambda = dx^\lambda \bC[x^{-1}] x^{-1}.
\end{equation*}
These modules are both \irr, excepting $\F_0$ and $\F^-_1$, which each have exactly one submodule: $\bC$ and $dx \vtsp \bC[x^{-1}] x^{-2}$, respectively.  The de~Rham differential $dx \px$ gives isomorphisms from the quotient $\F_0/\bC$ to $\F_1$ and from $\F^-_0$ to $dx \vtsp \bC[x^{-1}] x^{-2}$.

The quotient of $\F^-_1$ by $dx \vtsp \bC[x^{-1}] x^{-2}$ is trivial and is given by the residue map, the unique $\VR$-covariant surjection from any \tdm\ to the trivial module.  It extends to all of $\F_1$:

\begin{defn}
The \textit{residue map} $\res: \F_1 \to \bC$ is $\res \big(dx \vtsp g(x) \big) := \frac{1}{2 \pi i} \oint_0 dx \vtsp g(x)$.
\end{defn}

There is a natural multiplication map on the \tdm s:
\begin{equation*}
   \mult: \F_{a, \lambda} \ot \F_{a', \lambda'} \to \F_{a+a', \lambda+\lambda'}, \qquad
   dx^\lambda x^\mu \ot dx^{\lambda'} x^{\mu'} \mapsto dx^{\lambda+\lambda'} x^{\mu+\mu'}.
\end{equation*}
It is $\VR$-covariant, and is known as the \textit{zeroth transvectant}.  In conjunction with the residue map, it defines a family of nondegenerate invariant bilinear forms:
\begin{equation*}
   \res \circ \mult: \F_{a, \lambda} \ot \F_{-a, 1-\lambda} \to \bC.
\end{equation*}
Thus we have the following identifications of restricted duals:
\begin{equation} \label{dual}
   \F_{a, \lambda}^* \cong \F_{-a, 1-\lambda}, \qquad
   \F_\lambda^* \cong \F^-_{1-\lambda}.
\end{equation}

Mathieu obtained the following result en route to his proof of the Kac conjecture:

\begin{thm}[\cite{Ma92}]
The trivial module and the \tdm s exhaust the \irr\ admissible modules of $\VR$.
\end{thm}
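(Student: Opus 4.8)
The plan is to classify an irreducible admissible module $V$ by first pinning down its support $\{\mu : V_\mu \ne 0\}$. Since $e_0$ fixes weights while the remaining $e_n$ shift them by nonzero integers, irreducibility forces the support into a single coset $a + \bZ$, $a \in \bC/\bZ$, and leaves three mutually exclusive cases: $V$ has a highest weight vector (one killed by $\VR_+ := \spanrm\{e_n : n \ge 1\}$); $V$ has a lowest weight vector (one killed by $e_{-1}$); or $V$ has neither, in which case its support is all of $a + \bZ$ and every weight space is nonzero. I expect the first two cases to be routine and the third to carry the difficulty.

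For a lowest weight vector $v$ of weight $\lambda$: it generates $V$, so $V$ is a quotient of the Verma-type module $M(\lambda) := \dU(\VR) \ot_{\dU(\db)} \bC_\lambda$. The weight-$\lambda$ space of $M(\lambda)$ is one dimensional and is met by no proper submodule, so $M(\lambda)$ has a unique maximal submodule, hence a unique irreducible quotient $L(\lambda)$, and $V \cong L(\lambda)$. If $\lambda \ne 0$, then $dx^\lambda \in \F_\lambda$ has weight $\lambda$ and is killed by $e_{-1}$, so there is a nonzero $\VR$-map $M(\lambda) \to \F_\lambda$; as $\F_\lambda$ is irreducible (by the facts recalled above) this map is onto, so $\F_\lambda \cong L(\lambda) \cong V$. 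If $\lambda = 0$, the trivial module is a nonzero quotient of $M(0)$, so $V \cong \bC$. The highest weight case is dual: there $V = \dU(\dc) w$ for a highest weight vector $w$ of weight $\mu$, so $V$ already has one-dimensional weight spaces, and running the same argument with the co-Verma $\dU(\VR) \ot_{\dU(\dr)} \bC_\mu$ over the parabolic $\dr := \bC e_0 \oplus \VR_+$ --- matched against the highest weight vector of $\F^-_{\mu+1}$, or against $\bC$ when $\mu = 0$ --- yields $V \cong \F^-_{\mu+1}$ or $V \cong \bC$.

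It remains to handle a $V$ with neither a highest nor a lowest weight vector. The key step --- and, I expect, the main obstacle --- is to strengthen ``every weight space is finite dimensional'' to ``every weight space has dimension at most $N$'' for one fixed integer $N$: the multiplicities are \emph{uniformly} bounded. One gets monotonicity for free, since with no lowest weight vector $e_{-1}$ is injective on each weight space, so $\dim V_{\mu-1} \ge \dim V_\mu$; but excluding unbounded growth is the technical core of Mathieu's work. The route I would follow is his: exhibit $V$ inside a ``coherent family'' of uniformly bounded modules --- a consistent family indexed by $\bC/\bZ$ on which the traces of the degree-zero elements of $\dU(\VR)$ vary polynomially with the weight --- and then exploit the rigidity of such families over $\VR$. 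I do not see how to sidestep the substance of this estimate.

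Once the multiplicities are uniformly bounded, $V$ is cuspidal, and the remaining classification is comparatively soft. On a cofinite set of weights the multiplicity attains its maximum $N$; a standard argument --- tracking where the composite operators $e_1 e_{-1}$ and $e_{-1} e_1$ act non-invertibly on weight spaces, and splitting off a proper submodule should $N \ge 2$ --- forces $N = 1$, so $V$ is multiplicity-free on $a + \bZ$. With one-dimensional weight spaces each $e_n$ then acts from $V_\mu$ to $V_{\mu+n}$ by a scalar $c_n(\mu)$, and imposing $[e_m,e_n] = (n-m)e_{m+n}$ reduces to a functional equation whose only solutions, after rescaling the weight vectors, are $c_n(\mu) = \mu + \lambda(n+1)$ for some $\lambda \in \bC$ --- precisely the action $\pi_\lambda$ on $\F_{a,\lambda}$. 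The same computation also disposes of the highest weight case, where $N = 1$ was automatic. Collecting the cases, the irreducible admissible modules of $\VR$ are exactly the trivial module and the tensor density modules.
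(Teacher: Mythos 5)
The paper does not prove this statement at all --- it is quoted from Mathieu \cite{Ma92} as background --- so the only question is whether your argument stands on its own, and it does not: it is an outline whose decisive step is left unproved. The reduction you give is fine as far as it goes: the support trichotomy, the Verma/co-Verma argument identifying the lowest- and highest-weight irreducibles with $\F_\lambda$, $\F^-_{\mu+1}$, or $\bC$ at the degenerate weights, and the observation that what remains is the ``cuspidal'' case. But the entire content of the theorem lives in the step you flag and then skip: proving that an irreducible admissible module with neither a highest nor a lowest weight vector has \emph{uniformly} bounded weight multiplicities, and then that the bound is~$1$. Saying you would ``exhibit $V$ inside a coherent family and exploit the rigidity of such families'' names Mathieu's proof rather than giving one, and your own admission that you do not see how to sidestep the substance of this estimate concedes the gap. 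Moreover, even granting uniform boundedness, the passage from ``multiplicity $\le N$'' to ``multiplicity $1$'' over $\VR$ is not the routine splitting-off argument you sketch; it is itself a substantial part of the known proofs (Mathieu, and the related Martin--Piard analysis for Virasoro), so that step also needs real work.

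Two smaller points. In the multiplicity-one endgame the scalar should be $c_n(\mu) = \mu + \lambda n$ when $\mu$ denotes the weight (your $\mu + \lambda(n+1)$ is the formula in terms of the exponent of $x$, not the weight), though after rescaling this does not affect the conclusion. And the claim that in the third case the support is all of $a + \bZ$ deserves a line: absence of lowest weight vectors makes $e_{-1}$ injective on weight spaces, which propagates nonvanishing downward, but the upward direction requires a separate argument. These are cosmetic next to the main issue: as written, the proposal reduces Mathieu's theorem to its own hardest step and stops there.
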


\section{The structure of $\dU(\VR)$} \label{Tensors}

Here we review $\dU(\VR)$ and its action on the $\F_\lambda$.

\begin{lemma} \label{VRF}
The adjoint \r\ of $\VR$ on itself is isomorphic to $\F_{-1}$, via the map $f(x) \px \mapsto dx^{-1} f(x)$.
\end{lemma}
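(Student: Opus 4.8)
The plan is to write the claimed map down explicitly and verify directly that it intertwines the two actions on the monomial basis $\{e_n\}$; since $\VR = \spanrm_\bC\{e_n\}$ and both actions are linear, this suffices. First I would record the two sides in coordinates. A direct computation with $e_n = x^{n+1}\px$ gives the standard bracket $[e_m, e_n] = (n-m)\vtsp e_{m+n}$ for $m, n \in -1 + \bN$. Unwinding the definition of the \tdm\ at $\lambda = -1$, the action on $\F_{-1} = dx^{-1}\bC[x]$ is $\pi_{-1}(f\px)(dx^{-1} g) = dx^{-1}(f g' - f' g)$.

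Next I would set $\phi: \VR \to \F_{-1}$, $\phi(f\px) := dx^{-1} f$, which on basis vectors reads $\phi(e_n) = dx^{-1} x^{n+1}$. As $n$ ranges over $-1 + \bN$, the exponents $n+1$ range over $\bN$, so $\phi$ carries the basis $\{e_n\}$ bijectively onto the basis $\{dx^{-1} x^k : k \in \bN\}$ of $\F_{-1}$; hence $\phi$ is a linear isomorphism. It then remains to check $\VR$-equivariance, i.e.\ $\pi_{-1}(e_m)\vtsp \phi(e_n) = \phi\big([e_m, e_n]\big)$ for all $m, n$. Taking $f = x^{m+1}$ and $g = x^{n+1}$ in the formula above gives $f g' - f' g = (n-m)\vtsp x^{m+n+1}$, whence $\pi_{-1}(e_m)(dx^{-1} x^{n+1}) = (n-m)\vtsp dx^{-1} x^{m+n+1} = \phi\big((n-m)\vtsp e_{m+n}\big) = \phi\big([e_m, e_n]\big)$, as needed.

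I expect no real obstacle here: the content of the lemma is just the observation that the $-f' g$ in the vector field bracket $[f\px, g\px] = (f g' - f' g)\px$ is precisely the $\lambda f' g$ term of the \tdm\ action at $\lambda = -1$, while the weight bookkeeping (both $e_n$ and $dx^{-1} x^{n+1}$ have $e_0$-weight $n$) forces $\phi$ to be the correct map up to a scalar, which the computation above fixes. One could instead appeal to the classification of admissible modules, but the direct verification is shorter and self-contained.
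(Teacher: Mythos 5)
Your proof is correct, and it is exactly the routine verification intended here: the paper states this lemma without proof, treating it as a standard fact reviewed from the literature. Your computation — matching the bracket $[e_m,e_n]=(n-m)e_{m+n}$ against $\pi_{-1}(f\px)(dx^{-1}g)=dx^{-1}(fg'-f'g)$, together with the observation that the map sends the basis $\{e_n\}$ bijectively to $\{dx^{-1}x^{n+1}\}$ with matching $e_0$-weights — is precisely the check the authors are implicitly relying on, so there is nothing to add.
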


Thus the symmetrizer map shows that the adjoint action on $\dU(\VR)$ is isomorphic to the action on the symmetric algebra $\S(\F_{-1})$.

\begin{lemma} \label{Tensor F}
For any scalars $\lambda_1, \ldots, \lambda_k$, there is a $\db$-module isomorphism
\begin{equation*}
   \bigotimes_{j=1}^k \F_{\lambda_j} \bcong
   \bigoplus_{n=0}^\infty t_k(n) \F_{n + \lambda_1 + \cdots + \lambda_k},
   \qquad t_k(n) := \binom{n + k - 2}{k - 2}.
\end{equation*}
\end{lemma}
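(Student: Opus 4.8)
The plan is to compute the $\db$-module structure of the tensor product $\bigotimes_{j=1}^k \F_{\lambda_j}$ by tracking weights, using the fact that a $\db$-module spanned by its (finite-dimensional) weight spaces decomposes into lowest-weight modules $\F_\mu$ according to the dimensions of its spaces of \lwv s. Recall that $\F_\mu = dx^\mu \bC[x]$ has weight spaces $\F_\mu \cap (\cdot)_{\mu+n} = \bC\, dx^\mu x^n$ for $n \ge 0$, one-dimensional in each weight $\ge \mu$ and zero below; its unique \lwv\ (up to scalars) has weight $\mu$. Hence for any admissible $\db$-module $V$, the multiplicity of $\F_\mu$ in a decomposition into such modules equals $\dim V_\mu^{e_{-1}}$, and summing the known weight multiplicities of the $\F_\mu$'s over $\mu$ recovers $\dim V_\mu$. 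So it suffices to identify $\dim V_\mu$ for the tensor product and then peel off the lowest-weight pieces.

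The first step is to record that a basis of $\bigotimes_{j=1}^k \F_{\lambda_j}$ is given by the pure tensors $dx^{\lambda_1} x^{m_1} \ot \cdots \ot dx^{\lambda_k} x^{m_k}$ with each $m_j \in \bN$, and that such a tensor is a weight vector of weight $(\lambda_1 + \cdots + \lambda_k) + (m_1 + \cdots + m_k)$. Writing $\Lambda := \lambda_1 + \cdots + \lambda_k$, the weight-$(\Lambda + n)$ space therefore has dimension equal to the number of $k$-tuples $(m_1, \ldots, m_k) \in \bN^k$ with $m_1 + \cdots + m_k = n$, which is $\binom{n + k - 1}{k - 1}$. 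In particular all weights are $\ge \Lambda$, the module is admissible, and its weight multiplicities are these ``stars and bars'' numbers.

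The second step is to deduce the multiplicities $t_k(n)$ of $\F_{\Lambda + n}$. Since the weight-$\mu$ multiplicity of $\F_{\Lambda+n}$ is $1$ for $\mu \ge \Lambda+n$ and $0$ otherwise, matching total weight multiplicities forces, for every $N \ge 0$,
\begin{equation*}
   \sum_{n=0}^N t_k(n) = \binom{N + k - 1}{k - 1}.
\end{equation*}
Taking successive differences and using the hockey-stick identity $\binom{N+k-1}{k-1} - \binom{N+k-2}{k-1} = \binom{N+k-2}{k-2}$ gives $t_k(n) = \binom{n + k - 2}{k - 2}$ as claimed (the empty-sum/degenerate cases $k = 1, 2$ are checked directly: for $k=1$ the product is $\F_{\lambda_1}$ itself, and $t_1(n) = \binom{n-1}{-1}$ is $1$ for $n=0$ and $0$ for $n > 0$, matching the convention; for $k = 2$, $t_2(n) = \binom{n}{0} = 1$ for all $n$, matching $\dim V_{\Lambda+n} = n+1 = \sum_{n'=0}^n 1$). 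Finally, one must exhibit an actual $\db$-submodule isomorphic to the corresponding direct sum: concretely, for each $n$ the $t_k(n)$-dimensional space $\bigl(\bigotimes \F_{\lambda_j}\bigr)_{\Lambda+n}^{e_{-1}}$ of \lwv s generates a sum of copies of $\F_{\Lambda+n}$ under $\db$, and because $\db$ is solvable with the weight filtration respected, these pieces assemble to give the full decomposition; alternatively, invoke complete reducibility of admissible $\db$-modules into the $\F_\mu$ (which follows from the one-dimensionality of the weight spaces of each $\F_\mu$ together with $e_{-1}$ acting injectively off the lowest weight).

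The main obstacle is the last point: \emph{a priori} a $\db$-module with these weight multiplicities need not split as a direct sum of $\F_\mu$'s — one must rule out nontrivial self-extensions among lowest-weight modules at the level of $\db = \spanrm\{e_{-1}, e_0\}$. This is where one uses that $e_{-1}: V_{\Lambda+n} \to V_{\Lambda+n+1}$ is injective for each $n$ in the tensor product (which is immediate from the explicit basis, since $e_{-1}$ acts as a sum of "lowering-by-$x$-inverse" — actually \emph{raising} — operators with no kernel in positive corank), forcing each \lwv\ to generate a clean copy of $\F_{\Lambda+n}$ and forcing the multiplicities to add up exactly; no extension problem survives. Once that injectivity is in hand, the decomposition and the formula for $t_k(n)$ follow by the bookkeeping above.
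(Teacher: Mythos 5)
Your weight bookkeeping (the $(\Lambda+n)$-weight space of the tensor product has dimension $w_k(n)=\binom{n+k-1}{k-1}$, and successive differences give $t_k(n)$) matches the paper's count, but the structural step that justifies \emph{any} decomposition into $\F_\mu$'s is where your argument breaks, and none of the three mechanisms you offer for it works.  First, in this paper's conventions $e_{-1}=\px$ satisfies $[e_0,e_{-1}]=-e_{-1}$, so it maps $V_{\Lambda+n}$ to $V_{\Lambda+n-1}$ (it lowers weight, it is not ``raising''), and it is nowhere near injective there: its kernel in weight $\Lambda+n$ is exactly the space of \lwv s whose dimension $t_k(n)>0$ the lemma is computing, so the claimed injectivity is false and cannot ``force'' the splitting.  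Second, a \lwv\ does not generate a copy of $\F_\mu$ under $\db=\spanrm\{e_{-1},e_0\}$: since $\db$ consists only of the Cartan element and the lowering operator, the $\db$-submodule generated by a \lwv\ is just the line it spans.  Third, ``complete reducibility of admissible $\db$-modules into the $\F_\mu$'' is not a true general principle (the trivial module, or any admissible module with weights bounded below on which $e_{-1}$ acts by zero, is a counterexample), so it cannot simply be invoked; and without an established decomposition, your telescoping identity $\sum_{n\le N}t_k(n)=w_k(N)$ has nothing to telescope.

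The ingredient that closes the gap --- and is the paper's route --- is \emph{surjectivity} of $e_{-1}$: it acts surjectively on each $\F_{\lambda_j}$ (it is differentiation on $dx^{\lambda_j}\bC[x]$), hence on $\bigotimes_j\F_{\lambda_j}$, where it acts as $\sum_j\partial_{x_j}$ on $\bC[x_1,\dots,x_k]$ up to the placeholders.  One then checks that any $\db$-module with finite-dimensional weight spaces, weights bounded below, and $e_{-1}$ acting surjectively is a direct sum of \tdm s: choosing linear sections $s_\nu$ of $e_{-1}\colon V_{\nu+1}\twoheadrightarrow V_\nu$ gives $V_{\nu+1}=\bigl(\ker e_{-1}\cap V_{\nu+1}\bigr)\oplus s_\nu(V_\nu)$, so the strings obtained by iterating the sections starting from \lwv s are $\db$-copies of the $\F_\mu$ and exhaust $V$.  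With that in place, the multiplicity of $\F_{\Lambda+n}$ is the dimension of the weight-$(\Lambda+n)$ space of \lwv s, which by surjectivity and rank--nullity is $w_k(n)-w_k(n-1)=t_k(n)$, exactly the paper's conclusion.
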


\begin{proof}
First use the fact that $e_{-1}$ acts surjectively on $\F_\lambda$ to see that it also acts surjectively on $\bigotimes_j \F_{\lambda_j}$.  Then check that any $\db$-module on which $e_{-1}$ acts surjectively and whose weights are bounded below is a direct sum of \tdm s.

Standard counting arguments show that the dimension of the $\big( n + \sum_j \lambda_j \big)$-weight space of $\bigotimes_j \F_{\lambda_j}$ is $w_k(n) := \binom{n + k - 1}{k - 1}$.  By the $e_{-1}$-surjectivity, the dimension of the subspace of \lwv s is $w_k(n) - w_k(n-1)$, which is $t_k(n)$.
\end{proof}

\begin{lemma} \label{Sym F}
For any scalar $\lambda$, there is a $\db$-module isomorphism
\begin{equation*}
   \S^k (\F_\lambda) \bcong
   \bigoplus_{n=0}^\infty q_k(n) \F_{n + k \lambda},
\end{equation*}
where $q_k(n)$ denotes the number of partitions of $n$ with all part sizes in $[2, k]$.
\end{lemma}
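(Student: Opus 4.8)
The plan is to reduce the symmetric power computation to a generating function identity, exploiting the $\db$-module isomorphism already established in Lemma~\ref{Tensor F}. Since $\F_\lambda$ is a $\db$-module whose weights are bounded below (the lowest weight being $\lambda$, on the \lwv\ $dx^\lambda$) and on which $e_{-1}$ acts surjectively, the same holds for $\S^k(\F_\lambda)$: surjectivity of $e_{-1}$ on a symmetric power follows from surjectivity on the underlying module together with the Leibniz rule, and boundedness of weights is immediate. Hence, by the structural fact invoked in the proof of Lemma~\ref{Tensor F} (any such $\db$-module is a direct sum of \tdm s), $\S^k(\F_\lambda)$ decomposes as $\bigoplus_n m_k(n) \F_{n + k\lambda}$ for some multiplicities $m_k(n)$, and it remains only to identify $m_k(n)$ with $q_k(n)$.

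The key step is to count the dimension of each weight space of $\S^k(\F_\lambda)$ and then extract the \lwv\ multiplicities by the same subtraction trick used before: $m_k(n) = \dim \S^k(\F_\lambda)_{n + k\lambda} - \dim \S^k(\F_\lambda)_{n - 1 + k\lambda}$. A monomial basis of $\S^k(\F_\lambda)$ consists of unordered $k$-element multisets $\{dx^\lambda x^{a_1}, \ldots, dx^\lambda x^{a_k}\}$ with each $a_i \in \bN$, and such a basis vector has weight $(a_1 + \cdots + a_k) + k\lambda$. Therefore $\dim \S^k(\F_\lambda)_{n + k\lambda}$ equals the number of multisets of size $k$ drawn from $\bN$ with entry sum $n$, equivalently the number of partitions of $n$ into at most $k$ parts. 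I would encode this with the generating function $\prod_{j \ge 1} \frac{1}{1 - q^j}$ truncated appropriately — more precisely, $\sum_n \dim \S^k(\F_\lambda)_{n+k\lambda}\, q^n = \prod_{i=1}^{k} \frac{1}{1 - q^i}$ after the standard change of variables sending a partition into at most $k$ parts to a partition into parts of size at most $k$ (conjugation). Multiplying the \lwv-count generating function by the geometric correction, we get
\begin{equation*}
   \sum_{n \ge 0} m_k(n)\, q^n = (1 - q) \prod_{i=1}^{k} \frac{1}{1 - q^i}
   = \prod_{i=2}^{k} \frac{1}{1 - q^i},
\end{equation*}
and the right-hand side is by inspection the generating function for partitions of $n$ with all part sizes in $[2, k]$, i.e.\ $\sum_n q_k(n) q^n$. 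Comparing coefficients gives $m_k(n) = q_k(n)$, which is the claim.

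The main obstacle, such as it is, lies not in any deep argument but in getting the combinatorial bookkeeping exactly right: one must be careful that "partitions into at most $k$ parts" (the weight-space count) conjugates to "partitions into parts of size $\le k$", so that the generating function is the finite product $\prod_{i=1}^k (1-q^i)^{-1}$ and not something with an off-by-one error in the range; and one must confirm that the telescoping factor of $(1-q)$ cancels exactly the $i = 1$ term, leaving $\prod_{i=2}^k(1-q^i)^{-1}$. I would also remark that the hypothesis $k \ge 2$ is implicit here (for $k = 1$ the product is empty, correctly giving $q_1(n) = \delta_{n,0}$ since $\S^1(\F_\lambda) = \F_\lambda$ has a single \lwv). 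Everything else is a direct appeal to Lemma~\ref{Tensor F}'s method, so the proof is short.
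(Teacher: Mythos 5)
Your proposal is correct and follows essentially the same route as the paper: identify the $(n+k\lambda)$-weight space dimension of $\S^k(\F_\lambda)$ with $p_k(n)$ via Young diagram conjugation, use surjectivity of $e_{-1}$ (as in Lemma~\ref{Tensor F}) to see the module splits into \tdm s with \lwv\ multiplicities $p_k(n)-p_k(n-1)$, and identify this difference with $q_k(n)$. The only cosmetic difference is that you package the last step as a generating-function cancellation $(1-q)\prod_{i=1}^k(1-q^i)^{-1}=\prod_{i=2}^k(1-q^i)^{-1}$, where the paper states the combinatorial identity directly.
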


\begin{proof}
A Young diagram conjugation argument shows that the dimension of the $(n + k \lambda)$-weight space of $\S^k (\F_\lambda)$ is $p_k(n)$, the number of partitions of $n$ with all part sizes in $[1, k]$.  As before, $e_{-1}$ acts surjectively on $\S^k (\F_\lambda)$, and so the dimension of the subspace of \lwv s is $p_k(n) - p_k(n-1)$, which is $q_k(n)$.
\end{proof}

We emphasize that these are only $\db$-decompositions, and they are not unique.  The precise $\da$-decompositions may be found in the appendix of \cite{CM07}.  Under $\VR$ these modules are in general indecomposable.

Observe that the tensor density action $\pi_\lambda$ carries $\dU(\VR)$ into the Weyl algebra $\bC[x, \px]$.  This restricts the possible actions of lowest weight elements.  It also permits us to reduce the study of the annihilators of the $\F_{a, \lambda}$ to those of the $\F_\lambda$.

\begin{lemma} \label{LWV action}
Let $\Theta$ be an element of $\dU(\VR)^{e_{-1}}_n$.  For all $\lambda \in \bC$,
\begin{enumerate}
\item[(i)]
If $n > 0$, then $\pi_\lambda(\Theta) = 0$.
\smallbreak \item[(ii)]
If $n \le 0$, then $\pi_\lambda(\Theta)$ is a multiple of $\px^{|n|}$.
\end{enumerate}
\end{lemma}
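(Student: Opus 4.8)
The plan is to exploit the Weyl-algebra realization $\pi_\lambda : \dU(\VR) \to \bC[x, \px]$ together with weight considerations. First I would determine the shape of $\pi_\lambda(\Theta)$ as a differential operator. Since $\Theta$ is a weight vector for $e_0$ of weight $n$, and $\pi_\lambda(e_0) = x\px + \lambda$ (the Euler operator), the operator $\pi_\lambda(\Theta)$ must be homogeneous of weight $n$ under the adjoint action of $x\px$ on $\bC[x,\px]$. Writing a general element of $\bC[x,\px]$ as a sum of monomials $x^a \px^b$, each such monomial has weight $a - b$; hence $\pi_\lambda(\Theta)$ is a (finite, since $\Theta \in \dU_n(\VR)$ forces degree $\le n$ in $\px$) linear combination of terms $x^{b+n}\px^b$ with $b \ge 0$ and $b + n \ge 0$. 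In particular, when $n > 0$ every such term is divisible on the left by $x$, and when $n \le 0$ the only term with $b + n = 0$ is a multiple of $\px^{|n|}$ (all other allowed terms have a factor of $x$).

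Next I would bring in the lowest-weight condition $e_{-1} \cdot \Theta = 0$, i.e. $[\,\pi_\lambda(e_{-1}),\, \pi_\lambda(\Theta)\,] = 0$, noting $\pi_\lambda(e_{-1}) = \px$. So $\pi_\lambda(\Theta)$ commutes with $\px$, which means $\pi_\lambda(\Theta) \in \bC[\px]$: it is a polynomial in $\px$ alone, with no $x$-dependence. Combining this with the weight computation above: a monomial $x^{b+n}\px^b$ lies in $\bC[\px]$ only if $b + n = 0$. If $n > 0$ this is impossible for $b \ge 0$, so $\pi_\lambda(\Theta) = 0$, giving (i). If $n \le 0$, the only surviving monomial is $x^0 \px^{|n|}$, so $\pi_\lambda(\Theta)$ is a scalar multiple of $\px^{|n|}$, giving (ii).

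The one point requiring care is justifying that $[\pi_\lambda(e_{-1}), \pi_\lambda(\Theta)] = 0$ genuinely encodes $e_{-1}\cdot\Theta = 0$: here $\Theta \in \dU(\VR)^{e_{-1}}_n$ means $\Theta$ is annihilated by $e_{-1}$ under the adjoint action on $\dU(\VR)$, $\ad(e_{-1})\Theta = [e_{-1},\Theta] = 0$, and since $\pi_\lambda$ is an algebra homomorphism this maps to $[\pi_\lambda(e_{-1}), \pi_\lambda(\Theta)] = 0$ in $\bC[x,\px]$. Likewise $\ad(e_0)\Theta = n\Theta$ maps to $[\pi_\lambda(e_0), \pi_\lambda(\Theta)] = n\,\pi_\lambda(\Theta)$, which is the weight-homogeneity statement used above. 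I expect the main (and only mild) obstacle is being explicit about the set of monomials allowed in $\pi_\lambda(\Theta)$ — in particular confirming the upper bound on the $\px$-degree, which follows because $\Theta$ has degree $\le n$... more precisely, $\Theta \in \dU(\VR)^{e_{-1}}_n$ and each $e_i$ with $i \ge 0$ acts on $\bC[x]dx^\lambda$ as a first-order operator $x^{i+1}\px + \lambda(i+1)x^i$, so $\pi_\lambda$ of a degree-$m$ monomial has $\px$-degree $\le m$; combined with weight-homogeneity this pins down the finitely many possible terms, and the argument closes.
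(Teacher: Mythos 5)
Your argument is correct and is essentially the paper's proof: the image commutes with $\pi_\lambda(e_{-1}) = \px$, hence lies in the commutant $\bC[\px]$, and weight-$n$ homogeneity then forces it to be $0$ for $n>0$ and a multiple of $\px^{|n|}$ for $n\le 0$. (Minor note: the subscript $n$ in $\dU(\VR)^{e_{-1}}_n$ is the weight, not the filtration degree, but your finiteness worry is moot anyway since every element of $\bC[x,\px]$ is a finite sum of monomials.)
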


\begin{proof}
The commutant of $\px$ in $\bC[x, \px]$ is $\bC[\px]$.  Because $\pi_\lambda(e_{-1}) = \px$ and $\Theta$ is a lowest weight element of weight~$n$, $\pi_\lambda(\Theta)$ must lie in $\bC[\px]_n$.
\end{proof}

\begin{lemma} \label{F action}
Suppose that $U$ is a $\db$-invariant subspace of $\dU(\VR)$ which is $\db$-isomorphic to $\F_n$, for some $n > 0$.  Then $\pi_\lambda(U) = 0$ for all $\lambda$.
\end{lemma}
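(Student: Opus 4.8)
The plan is to reduce Lemma~\ref{F action} to Lemma~\ref{LWV action}(i) by locating an explicit lowest weight vector in $U$. Since $U$ is $\db$-isomorphic to $\F_n$ with $n > 0$, it is an admissible lowest weight module: it has a one-dimensional $n$-weight space spanned by a lowest weight vector $\Theta_0 \in \dU(\VR)^{e_{-1}}_n$, and $U$ is spanned over $\dU(\db)$ by $\Theta_0$ (indeed $\F_n = dx^n\bC[x]$ is generated by $dx^n$ under the action of $e_0$ and the nilradical-complementary direction, but more simply every element of $\F_n$ is obtained from $dx^n$ by repeated application of $\VR$, hence every element of $U$ lies in $\dU(\VR)\,\Theta_0\,\dU(\VR)$, i.e.\ in $\la \Theta_0 \ra$).

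First I would invoke Lemma~\ref{LWV action}(i): because $\Theta_0$ is a lowest weight element of $\dU(\VR)$ of weight $n > 0$, we get $\pi_\lambda(\Theta_0) = 0$ for every $\lambda \in \bC$. Next I would observe that $\pi_\lambda\colon \dU(\VR) \to \End(\F_\lambda)$ is an algebra homomorphism and a $\VR$-map (for the adjoint action on $\dU(\VR)$ and the commutator action on $\End(\F_\lambda)$), so its kernel $\Ann_{\VR}(\F_\lambda)$ is a two-sided ideal. Hence $\Theta_0 \in \Ann_{\VR}(\F_\lambda)$ forces $\la \Theta_0 \ra \subseteq \Ann_{\VR}(\F_\lambda)$, and since $U \subseteq \la \Theta_0 \ra$ we conclude $\pi_\lambda(U) = 0$.

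The one point that needs care — and what I expect to be the only real obstacle — is the claim that the lowest weight vector $\Theta_0$ generates all of $U$ as a two-sided ideal, equivalently that $U \subseteq \la \Theta_0 \ra$. This follows from the structure of $\F_n$ for $n > 0$: it is generated as a $\VR$-module by any nonzero lowest weight vector (apply powers of $e_1$, or note that the weight spaces are all one-dimensional and $e_1$ acts nontrivially between consecutive ones since $n \notin -\bN$ so there is no finite-dimensional subquotient obstructing surjectivity upward). Transporting this through the $\db$-isomorphism $U \bcong \F_n$ shows $U = \dU(\db)\,\Theta_0$ as a vector space; a fortiori $U \subseteq \dU(\VR)\,\Theta_0\,\dU(\VR) = \la \Theta_0 \ra$. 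Alternatively, and perhaps more cleanly, one can avoid generation entirely: every element $\Theta \in U$ is a $\bC$-linear combination of vectors of the form $\ad(e_{i_1})\cdots\ad(e_{i_r})\Theta_0$ with each $i_j \ge 0$, and since $\pi_\lambda$ intertwines $\ad$ with bracketing against $\pi_\lambda(e_{i_j}) \in \bC[x,\px]$, each such vector maps under $\pi_\lambda$ to an iterated commutator of $\pi_\lambda(\Theta_0) = 0$, hence to $0$. Either route closes the argument; I would present the second since it uses only Lemma~\ref{LWV action}(i) and the $\db$-module description of $U$ already in hand, with no appeal to irreducibility.
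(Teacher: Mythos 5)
There is a genuine gap: the step from $\pi_\lambda(\Theta_0)=0$ to $\pi_\lambda(U)=0$ needs $U$ to lie in the two-sided ideal $\la \Theta_0 \ra$ (or at least in the $\VR$-module generated by $\Theta_0$), and the hypotheses do not give this. You only know that $U$ is $\db$-invariant and $\db$-isomorphic to $\F_n$, and $\db = \spanrm\{e_{-1}, e_0\}$ consists of a lowering operator and the grading operator: in $\F_n$ the lowest weight vector is killed by $e_{-1}$ and scaled by $e_0$, so $\dU(\db)\,\Theta_0 = \bC\,\Theta_0$, not $U$ (indeed $\F_n$ is not even finitely generated over $\db$), so your assertion that $U = \dU(\db)\,\Theta_0$ fails. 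The $\VR$-generation of $\F_n$ by $dx^n$ via powers of $e_1$ cannot be transported through a mere $\db$-isomorphism, because $U$ is not assumed $\ad(e_1)$-stable, and the higher-weight vectors of $U$ need not lie in the $\VR$-module, let alone the ideal, generated by $\Theta_0$. The same objection defeats your ``cleaner alternative'': there is no reason an arbitrary $\Theta \in U$ is a combination of $\ad(e_{i_1})\cdots\ad(e_{i_r})\Theta_0$ with $i_j \ge 0$. Note also that in the paper's applications (e.g.\ to $L_1$ in Proposition~\ref{L}) the $\VR$-invariance of the subspace is \emph{deduced} from this lemma, so importing a $\VR$-generation statement into its proof would risk circularity.

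The repair is to use the $\db$-structure for exactly what it gives: every weight vector of $U$ is annihilated by a power of $\ad(e_{-1})$, and all weights of $U$ are $\ge n > 0$. This is the paper's route: if $\pi_\lambda(\Theta) \ne 0$ for some $\Theta \in U$ of weight $n+m$, then $\ad(e_{-1})^{m+1}\Theta = 0$, so one may choose $m_0 \le m$ maximal with $\pi_\lambda\bigl(\ad(e_{-1})^{m_0}\Theta\bigr) \ne 0$; maximality forces this image to commute with $\pi_\lambda(e_{-1}) = \px$, hence to lie in $\bC[\px]$ (the same commutant fact behind Lemma~\ref{LWV action}), yet it has weight $n+m-m_0 \ge n > 0$, while $\bC[\px]$ has only non-positive weights --- a contradiction. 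So your instinct to exploit the commutant of $\px$ is right; it must be applied after lowering an arbitrary element of $U$ as far as its $\pi_\lambda$-image survives, not only to an actual lowest weight vector of $U$.
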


\begin{proof}
Suppose that $\pi_\lambda(\Theta) \not= 0$ for some $\Theta \in U$ of weight $n + m$, where $m \in \bN$.  Note that $\ad(e_{-1})^{m+1} \Theta = 0$.  Let $m_0$ be maximal for $\pi_\lambda(\ad(e_{-1})^{m_0} \Theta) \not= 0$.  Deduce that $\pi_\lambda(\ad(e_{-1})^{m_0} \Theta)$ is an element of $\bC[\px]$ of positive weight, a contradiction.
\end{proof}

\begin{prop} \label{no a}
For all~$a$, the annihilator of $\F_{a, \lambda}$ is equal to that of $\F_\lambda$:
\begin{equation*}
   \Ann_{\VR}(\F_{a, \lambda}) = \Ann_{\VR}(\F_\lambda) = \Ann_{\VR}(\F^-_\lambda).
\end{equation*}
\end{prop}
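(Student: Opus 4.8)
The plan is to prove the two equalities in turn. For the first, $\Ann_{\VR}(\F_{a,\lambda}) = \Ann_{\VR}(\F_\lambda)$, I would exploit the observation immediately preceding the proposition: under $\pi_\lambda$ the whole universal enveloping algebra maps into the Weyl algebra $\bC[x,\px]$, and the formula for $\pi_\lambda$ does not actually involve $a$ — the same differential operator $f g' + \lambda f' g$ acts on $dx^\lambda x^a \bC[x,x^{-1}]$ for every $a$. So for any $\Theta \in \dU(\VR)$, the operator $\pi_\lambda(\Theta) \in \bC[x,\px]$ is a single $a$-independent object, and $\Theta$ annihilates $\F_{a,\lambda}$ iff $\pi_\lambda(\Theta)$ kills $dx^\lambda x^a \bC[x,x^{-1}]$, i.e.\ iff $\pi_\lambda(\Theta)\big( x^{a+j}\big) = 0$ for all $j \in \bZ$. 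The key point is that a nonzero element of $\bC[x,\px]$ cannot kill the infinite-dimensional space $x^a\bC[x,x^{-1}]$ for any fixed $a$: writing $\pi_\lambda(\Theta)$ in normal-ordered form $\sum_{i} c_i(x)\px^i$, its action on $x^{a+j}$ is a Laurent polynomial in $x$ whose coefficients are polynomial in $j$, and these vanish for all $j \in \bZ$ only if all the $c_i$ vanish. Hence $\Theta \in \Ann_{\VR}(\F_{a,\lambda})$ iff $\pi_\lambda(\Theta) = 0$ in $\bC[x,\px]$, and this condition is manifestly independent of $a$. In particular it holds for $a \equiv 0$, where $\F_\lambda$ is the submodule $dx^\lambda\bC[x]$; but $\Theta$ killing the submodule of polynomials $dx^\lambda\bC[x]$ already forces $\pi_\lambda(\Theta) = 0$, by the same normal-ordering argument applied only to $x^j$, $j \ge 0$ (infinitely many values of $j$ still suffice to force each $c_i$ to vanish). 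So $\Ann_{\VR}(\F_\lambda) = \ker\big(\pi_\lambda\colon \dU(\VR) \to \bC[x,\px]\big) = \Ann_{\VR}(\F_{a,\lambda})$.

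For the second equality, $\Ann_{\VR}(\F_\lambda) = \Ann_{\VR}(\F^-_\lambda)$, I would argue both inclusions. The inclusion $\Ann_{\VR}(\F_{\lambda,\lambda}) \subseteq \Ann_{\VR}(\F^-_\lambda)$ is immediate since $\F^-_\lambda$ is a quotient of $\F_{\lambda,\lambda}$, and by the first part $\Ann_{\VR}(\F_{\lambda,\lambda}) = \Ann_{\VR}(\F_\lambda)$, giving $\Ann_{\VR}(\F_\lambda) \subseteq \Ann_{\VR}(\F^-_\lambda)$. For the reverse, suppose $\Theta$ annihilates $\F^-_\lambda = dx^\lambda\bC[x^{-1}]x^{-1}$; then $\pi_\lambda(\Theta)$ kills $x^j$ for all $j \le -1$, which is again infinitely many values of the weight shift, so by the normal-ordering/polynomial-in-$j$ argument $\pi_\lambda(\Theta) = 0$, hence $\Theta \in \Ann_{\VR}(\F_\lambda)$. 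Thus all four annihilators coincide with the single ideal $\ker\pi_\lambda$.

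The main obstacle — and really the only substantive point — is the elementary lemma that a nonzero element of the Weyl algebra $\bC[x,\px]$ cannot annihilate $x^a\bC[x,x^{-1}]$, nor even $x^{a}\bC[x]$, for a fixed $a \in \bC$. This should be handled cleanly by putting $\pi_\lambda(\Theta)$ in normal order $\sum_{i=0}^{N} c_i(x)\,\px^i$ with $c_i \in \bC[x,x^{-1}]$, computing $\pi_\lambda(\Theta)\cdot x^{a+j} = \sum_i c_i(x)\,(a+j)(a+j-1)\cdots(a+j-i+1)\,x^{a+j-i}$, and observing that for each fixed power of $x$ the coefficient, viewed as a function of $j$, is a nonzero polynomial unless the relevant $c_i$'s vanish; since it vanishes for infinitely many integers $j$ it must be identically zero, forcing all $c_i = 0$. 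Everything else is bookkeeping: translating ``annihilates the module'' into ``$\pi_\lambda(\Theta) = 0$'' uniformly in $a$ and across the submodule/quotient pair, and then reading off that the four annihilators are literally the same ideal $\ker\pi_\lambda \subseteq \dU(\VR)$.
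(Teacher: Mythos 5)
Your proposal is correct and is essentially the paper's own argument, just written out in full detail: the paper likewise observes that $\pi_\lambda$ sends $\dU(\VR)$ into the polynomial differential operators independently of $a$, and that no nonzero such operator can annihilate $\bC[x]$, $x^{-1}\bC[x^{-1}]$, or $x^a\bC[x,x^{-1}]$, so all the annihilators coincide with $\ker \pi_\lambda$. One small precision in the quotient case: annihilating $\F^-_\lambda$ means $\pi_\lambda(\Theta)$ maps $x^j$ (for $j \le -1$) into $\bC[x]$ rather than to zero, but your falling-factorial computation applied to the coefficients of the negative powers of $x$, which must vanish for infinitely many $j$, still forces $\pi_\lambda(\Theta)=0$.
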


\begin{proof}
Recall that as spaces, $\F_\lambda$ is $\bC[x]$, $\F^-_\lambda$ is $x^{-1} \bC[x^{-1}]$, and $\F_{a, \lambda}$ is $x^a \bC[x^{\pm1}]$; $dx^\lambda$ is just a place-holder indicating the action of $\dU(\VR)$, which acts by polynomial \dog s in all cases.  Any polynomial \dog\ which annihilates any one of these spaces must annihilate all of them.
\end{proof}

\section{The structure of $\dU_2(\VR)$} \label{U2}

In order to analyze annihilators generated in degree~$2$, we must recall the structure of $\dU_2(\VR)$.  The degree~$2$ symmetrizer map, $\sym_2$, is a $\VR$-injection from $\S^2(\VR)$ to $\dU_2(\VR)$.  It will be helpful to abbreviate its image as $\dU^2$:
\begin{equation} \label{U^2}
   \dU_2(\VR) = \bC \oplus \VR \oplus \dU^2,
   \qquad \dU^2 := \sym_2 \big( \S^2(\VR) \big).
\end{equation}

By Lemma~\ref{VRF}, $\dU^2$ is $\VR$-isomorphic to $\S^2(\F_{-1})$, and by Lemma~\ref{Sym F}, there exists a (non-unique) $\db$-isomorphism
\begin{equation} \label{S2b}
   \S^2 (\F_{-1}) \bcong \F_{-2} \oplus \F_0 \oplus \F_2 \oplus \F_4 \oplus \cdots.
\end{equation}
In particular, for each weight in $-2 + 2\bN$, the lowest weight space of $\dU^2$ is $1$-dimensional.  In weight~$-2$, it is clearly $\bC e_{-1}^2$, and there is a well-known technique for constructing it in other weights: the action of the \textit{step algebra} element
\begin{equation*}
   S := 2 e_2 (2 e_0 - 1) - 3 e_1^2
\end{equation*}
of $\dU(\VR)$ on any module maps any \lwv\ of weight $\mu$ either to zero or to a \lwv\ of weight $\mu + 2$.  Keeping track only of the coefficient of $e_{2\ell+1} e_{-1}$, one obtains $\ad(S)^{\ell+1} e_{-1}^2 \not= 0$ for all~$\ell$.  This proves:

\begin{lemma} \label{S formula}
For $\ell \in -1+\bN$, $\ad(S)^{\ell+1} e_{-1}^2$ is a lowest weight element of $\dU^2$ of weight $2\ell$.  These elements form a basis of the lowest weight space $(\dU^2)^{e_{-1}}$.
\end{lemma}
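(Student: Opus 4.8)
The plan is to establish first that, for every $\ell\in-1+\bN$, the element $\ad(S)^{\ell+1}e_{-1}^2$ is a nonzero lowest weight element of $\dU^2$ of weight $2\ell$, and then to read off the basis statement from the $\db$-decomposition~\eqref{S2b}. For the first part I would pass to the model $\dU^2\cong\S^2(\VR)$ supplied by $\sym_2$: since $\sym$ is a $\VR$-isomorphism, $\dU^2=\sym(\S^2(\VR))$ is a submodule of $\dU(\VR)$ for the (extended) adjoint action, and $\sym_2$ intertwines that action with the $\dU(\VR)$-action on $\S^2(\VR)$ extending the adjoint action of $\VR$ by derivations; in the monomial basis $\{e_i e_j\}$, $\ad(e_m)$ sends $e_i e_j$ to $(i-m)e_{m+i}e_j+(j-m)e_i e_{m+j}$. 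Two features of $S$ are all that enter. First, $S$ is homogeneous of $e_0$-weight $2$, so $\ad(S)$ raises weight by exactly $2$ and $\ad(S)^{\ell+1}e_{-1}^2$ is homogeneous of weight $-2+2(\ell+1)=2\ell$. Second, $S$ is a polynomial in $e_0,e_1,e_2$, none of which lowers a subscript, so $\ad(S)$ maps a monomial $e_i e_j$ to a combination of monomials in which neither subscript has decreased; in particular $\ad(S)$ cannot introduce the subscript $-1$ where it was absent.

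Granting the step-algebra property recalled above — $S$ sends a lowest weight vector of any module either to a lowest weight vector or to $0$ — and recalling that $e_{-1}^2$ is a lowest weight vector of the $\dU(\VR)$-module $\dU^2$ (its only one in weight $-2$), each $\ad(S)^{\ell+1}e_{-1}^2$ is automatically a lowest weight element of $\dU^2$, of weight $2\ell$, or else $0$. So the first assertion reduces to the nonvanishing $\ad(S)^{\ell+1}e_{-1}^2\ne0$, and here I would track only the coefficient of $e_{2\ell+1}e_{-1}$. The point is that $e_{2k+1}e_{-1}$ is the unique weight-$2k$ monomial divisible by $e_{-1}$, so, since $\ad(S)$ cannot introduce the subscript $-1$, the $e_{-1}$-divisible part of $\ad(S)^{k+1}e_{-1}^2$ arises entirely from the monomial $e_{2k-1}e_{-1}$ of $\ad(S)^{k}e_{-1}^2$. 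Writing $c_k$ for the coefficient of $e_{2k-1}e_{-1}$ in $\ad(S)^{k}e_{-1}^2$, we get a scalar recursion $c_{k+1}=m_k c_k$ with $c_0=1$, where $m_k$ is the coefficient of $e_{2k+1}e_{-1}$ in $\ad(S)(e_{2k-1}e_{-1})$. A direct computation with the bracket relations evaluates $m_0$ and, for $k\ge1$, $m_k$ as a quadratic polynomial in $k$ (the step $k=0$ being separate, owing to the repeated factor in $e_{-1}^2$), and one checks that these never vanish — the quadratic has irrational, hence non-integer, roots. Hence $c_k\ne0$ for every $k\ge0$, so $\ad(S)^{\ell+1}e_{-1}^2\ne0$ for $\ell\ge0$, while for $\ell=-1$ it is $e_{-1}^2\ne0$. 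I expect this coefficient bookkeeping to be the only real obstacle: one must confirm both that no other monomial feeds $e_{2k+1}e_{-1}$ and that the accumulated multiplier — a sum of the contributions of the several terms of $S$ — is genuinely nonzero at every step.

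For the basis statement, the decomposition~\eqref{S2b} gives $\dU^2\cong\S^2(\F_{-1})\bcong\F_{-2}\oplus\F_0\oplus\F_2\oplus\cdots$ as $\db$-modules, so $(\dU^2)^{e_{-1}}$ is the direct sum of its weight spaces $(\dU^2)^{e_{-1}}_{2\ell}$, $\ell\in-1+\bN$, each one-dimensional, with no lowest weight vectors in any other weight. By the first part $\ad(S)^{\ell+1}e_{-1}^2$ is a nonzero element of the one-dimensional space $(\dU^2)^{e_{-1}}_{2\ell}$, hence spans it; letting $\ell$ range over $-1+\bN$ then gives the asserted basis of $(\dU^2)^{e_{-1}}$.
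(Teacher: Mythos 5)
Your argument is correct and is essentially the paper's own: the paper likewise combines the step-algebra property of $S$ with the one-dimensionality of the lowest weight spaces of $\dU^2$ coming from \eqref{S2b}, and proves nonvanishing by tracking only the coefficient of $e_{2\ell+1}e_{-1}$. Your write-up merely makes the coefficient recursion explicit (the multiplier is indeed a nonvanishing quadratic in $k$, with a separate check at $k=0$), which is a more detailed rendering of the same computation.
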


The lowest weight space of $\dU^2$ of weight~$0$ is spanned by the \textit{Casimir element} of $\da$, the generator of the center of $\dU(\da)$:
\begin{equation*}
   Q := e_0^2 - e_0 - e_1 e_{-1} = -{\ts\frac{1}{48}} \ad(S) e_{-1}^2.
\end{equation*}
In any $\da$-module, this element acts by the scalar $\lambda^2 - \lambda$ on any \lwv\ of weight~$\lambda$.  This can be used to see that it acts by this same scalar on the \tdm s $\F_{a, \lambda}$, for all~$a$.  We write $q(\lambda)$ for said scalar:
\begin{equation} \label{Qq}
   \pi_\lambda(Q) = q(\lambda) := \lambda^2 - \lambda.
\end{equation}
Observe that $q(\lambda') = q(\lambda)$ \iff\ $\lambda'$ is $\lambda$ or $1 - \lambda$.  

We will also encounter the lowest weight space of $\dU^2$ of weight~$2$.  It is spanned by $\ad(e_2)Q$, which we abbreviate as $Q^{e_2}$:
\begin{equation} \label{Qe2}
   Q^{e_2} = 3e_1^2 - 2e_2 (2e_0 + 1) + e_3 e_{-1} = -{\ts\frac{1}{96}} \ad(S)^2 e_{-1}^2.
\end{equation}

The Casimir element gives a refinement of \eqref{S2b}:

\begin{lemma} \label{S2a}
There is a unique $\da$-decomposition
\begin{equation*}
   \S^2(\F_{-1}) \acong \F_{-2} \oplus \F_0 \oplus \F_2 \oplus \F_4 \oplus \cdots.
\end{equation*}
\end{lemma}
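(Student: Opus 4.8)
The plan is to diagonalize the Casimir element $Q$ of $\da$ on $\S^2(\F_{-1})$, in the generalized sense.  Since $Q$ is central in $\dU(\da)$, its (adjoint) action on $\dU^2\cong\S^2(\F_{-1})$ is a $\da$-module endomorphism; it is not a $\VR$-endomorphism, which is exactly why the resulting decomposition is only a $\da$-decomposition.  Being of weight $0$, this endomorphism preserves the (finite dimensional) weight spaces, so $\S^2(\F_{-1})$ is the direct sum of the generalized eigenspaces $M^{[c]}:=\bigcup_{N}\ker(Q-c)^{N}$, each a $\da$-submodule.  Since $e_{-1}$ acts surjectively on $\S^2(\F_{-1})$ and preserves each $M^{[c]}$, it acts surjectively on each of them, so by the argument in the proof of Lemma~\ref{Tensor F} every $M^{[c]}$ is a direct sum of \tdm s.

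Next I would identify the eigenvalues that occur and the $\db$-type of each $M^{[c]}$, using \eqref{S2b}.  That $\db$-isomorphism shows $(\dU^2)^{e_{-1}}$ is one dimensional in each weight $2\ell$ with $\ell\in-1+\bN$ and zero in every other weight, and by \eqref{Qq} the element $Q$ acts on the weight-$2\ell$ lowest weight space by the scalar $q(2\ell)=(2\ell)^2-2\ell$.  The elementary point is that $\ell\mapsto q(2\ell)$ is injective on $-1+\bN$: $q(2\ell)=q(2\ell')$ forces $2\ell'\in\{2\ell,\,1-2\ell\}$, and $1-2\ell$ is odd.  Every nonzero $M^{[c]}$ has weights bounded below, hence contains a lowest weight vector, so $c=q(2\ell)$ for some $\ell$; and every $q(2\ell)$ does occur.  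Therefore $\S^2(\F_{-1})=\bigoplus_{\ell\in-1+\bN}M^{[q(2\ell)]}$, and each summand is a direct sum of \tdm s with one dimensional lowest weight space concentrated in weight $2\ell$, i.e.\ $M^{[q(2\ell)]}\bcong\F_{2\ell}$.

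It remains to upgrade $\bcong$ to $\acong$, which is the delicate step because the lowest weight $\da$-module of lowest weight $2\ell$ is irreducible only for $\ell\ge1$.  For $\ell\ge1$, $\dU(\da)$ applied to the lowest weight vector of $M^{[q(2\ell)]}$ is an irreducible submodule with the same (bounded below, multiplicity one) weights as $\F_{2\ell}$, hence equals $M^{[q(2\ell)]}$, giving $M^{[q(2\ell)]}\acong\F_{2\ell}$.  For $\ell=0$ and $\ell=-1$, I would note that the $\da$-composition factors of $M^{[q(2\ell)]}$ must be $\{\bC,\F_1\}$, respectively $\{L,\F_3\}$ where $L$ is the five dimensional irreducible module, since these are the only simples with Casimir scalar $q(2\ell)$ and their weights fill out exactly those of $\F_{2\ell}$.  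A submodule isomorphic to $\F_1$ (resp.\ $\F_3$) would contribute a lowest weight vector in the odd weight $1$ (resp.\ $3$), contradicting \eqref{S2b}; so in each case the finite dimensional factor is the socle, and $M^{[q(2\ell)]}$ is a nonsplit extension of $\F_1$ by $\bC$ (resp.\ of $\F_3$ by $L$).  Since $\F_0$ and $\F_{-2}$ are such extensions and, in the relevant regular $\dsl_2$ block, extensions between two distinct simples are unique up to isomorphism (equivalently, a short computation with the $\dsl_2$-relations pins down the extension class), this yields $M^{[q(2\ell)]}\acong\F_{2\ell}$.

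Uniqueness is then automatic.  If $\S^2(\F_{-1})=\bigoplus_{\ell}N_\ell$ with $N_\ell\acong\F_{2\ell}$, then $Q$ acts on $N_\ell$ by the scalar $q(2\ell)$ (by \eqref{Qq}, as it does on $\F_{2\ell}$), so $N_\ell\subseteq M^{[q(2\ell)]}$; by the injectivity of $\ell\mapsto q(2\ell)$ and the directness of $\bigoplus_\ell M^{[q(2\ell)]}$ this forces $N_\ell=M^{[q(2\ell)]}$, the canonical generalized eigenspace.  I expect the main obstacle to be precisely the $\ell\in\{-1,0\}$ identification in the third paragraph, where the pertinent lowest weight $\da$-module is reducible; everything else follows directly from \eqref{S2b}, the Casimir scalar \eqref{Qq}, and the injectivity just noted.
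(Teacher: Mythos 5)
Your proof is correct and follows essentially the same route as the paper's: decompose $\S^2(\F_{-1})$ by the (generalized) eigenspaces of the Casimir $Q$, use the distinctness of the scalars $q(2\ell)$ together with \eqref{S2b} to identify each piece $\db$-isomorphically with $\F_{2\ell}$, and then upgrade to a $\da$-isomorphism. Your careful handling of the reducible cases $\ell \in \{-1,0\}$ via composition factors and uniqueness of the nonsplit extension is just an expanded version of what the paper compresses into ``elementary arguments'' and the closing assertion that any $\da$-module which is $\db$-isomorphic to $\F_\lambda$ is in fact $\da$-isomorphic to $\F_\lambda$.
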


\begin{proof}
By~\eqref{S2b}, there exist such decompositions under~$\db$.  Since the scalars $q(\lambda)$ are distinct for $\lambda \in -2 + 2\bN$, elementary arguments show that one of these $\db$-decompositions is given by the eigenspaces of~$Q$, which are $\da$-invariant.  Any $\da$-module which is $\db$-isomorphic to $\F_\lambda$ is in fact $\da$-isomorphic to $\F_\lambda$.
\end{proof}

We remark that the explicit $\da$-maps $\S^2(\F_{-1}) \to \F_{2\ell}$ are instances of \textit {symmetric transvectants}.  The next proposition amalgamates the preceding results:

\begin{prop} \label{U2a}
There is a unique $\da$-decomposition
\begin{equation*}
   \dU^2 = G_{-2} \oplus G_0 \oplus G_2 \oplus G_4 \oplus \cdots,
\end{equation*}
where $G_{2\ell}$ is an $\da$-submodule of $\dU^2$ which is $\da$-isomorphic to $\F_{2\ell}$.  The lowest weight lines are spanned by $\ad(S)^{\ell+1} e_{-1}^2$, or equivalently, by $e_{-1}^2, Q, Q^{e_2}, \ldots$.
\end{prop}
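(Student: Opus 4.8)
The plan is to amalgamate Lemmas~\ref{VRF}, \ref{S formula}, and~\ref{S2a}.  Since the degree-$2$ symmetrizer $\sym_2$ is a $\VR$-isomorphism $\S^2(\VR) \to \dU^2$ and Lemma~\ref{VRF} identifies $\VR$ with $\F_{-1}$, composing gives a $\VR$-isomorphism $\dU^2 \cong \S^2(\F_{-1})$.  Transporting the unique $\da$-decomposition of Lemma~\ref{S2a} through this isomorphism yields a unique $\da$-decomposition $\dU^2 = G_{-2} \oplus G_0 \oplus G_2 \oplus \cdots$ in which $G_{2\ell}$ is $\da$-isomorphic to $\F_{2\ell}$.  (Alternatively, uniqueness follows directly as in the proof of Lemma~\ref{S2a}: the scalars $q(2\ell)$ are pairwise distinct for $\ell \in -1 + \bN$, so any such decomposition must coincide with the decomposition of $\dU^2$ into eigenspaces of the Casimir element $Q$, which are automatically $\da$-invariant.)

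Next I would identify the lowest weight lines.  The module $\F_{2\ell}$ has a one-dimensional space of lowest weight vectors, all of weight~$2\ell$; hence the same holds for $G_{2\ell}$.  Summing over $\ell \in -1 + \bN$, the space $(\dU^2)^{e_{-1}}$ of lowest weight vectors of $\dU^2$ is the direct sum of these lines, the weight-$2\ell$ line coming from $G_{2\ell}$.  By Lemma~\ref{S formula}, $\ad(S)^{\ell+1} e_{-1}^2$ is a nonzero lowest weight vector of $\dU^2$ of weight~$2\ell$; being the unique such vector up to scalar, it spans the lowest weight line of $G_{2\ell}$.  Translating into more familiar generators: for $\ell = -1$ this line is $\bC e_{-1}^2$, and the identities $Q = -{\ts\frac{1}{48}} \ad(S) e_{-1}^2$ and~\eqref{Qe2} show that for $\ell = 0$ and $\ell = 1$ it is spanned by $Q$ and $Q^{e_2}$ respectively.

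I do not anticipate any serious obstacle here: the substantive content lies in Lemmas~\ref{S formula} and~\ref{S2a}, and the present argument merely aligns them.  The one point worth a word of care is that a $\da$-submodule of $\dU^2$ which is only \emph{a priori} $\db$-isomorphic to some $\F_{2\ell}$ is in fact $\da$-isomorphic to it; but this was already observed in the proof of Lemma~\ref{S2a}, and the same remark applies verbatim here.
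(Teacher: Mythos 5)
Your proposal is correct and follows essentially the same route as the paper, which gives no separate argument but presents Proposition~\ref{U2a} as an amalgamation of Lemma~\ref{VRF}, Lemma~\ref{S formula}, and Lemma~\ref{S2a} together with \eqref{U^2} --- precisely the assembly you carry out, including the identification of the lowest weight lines via \eqref{Qe2} and the formula $Q = -\frac{1}{48}\ad(S)e_{-1}^2$.
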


\begin{defn}
For $\ell \in -1+\bN$, set $H_{2\ell} := G_{2\ell} \oplus G_{2\ell+2} \oplus \cdots$.
\end{defn}

\begin{prop} \label{H2ell}
\begin{enumerate}
\item[(i)]
For $\ell \in -1+\bN$, $H_{2\ell}$ is a $\VR$-submodule of $\dU^2$.
\smallbreak \item[(ii)]
The quotient $H_{2\ell} / H_{2\ell+2}$ is $\VR$-isomorphic to $\F_{2\ell}$.
\smallbreak \item[(iii)]
Under $\VR$, $\dU^2$ is uniserial.  Its only submodules are the $H_{2\ell}$ and $\bC Q \oplus H_2$.
\end{enumerate}
\end{prop}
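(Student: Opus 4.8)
The plan is to prove all three parts by working in an explicit model. By Lemma~\ref{VRF} and the symmetrizer map, $\dU^2$ is $\VR$-isomorphic to $\S^2(\F_{-1})$, which I realize as the symmetric polynomials $\bC[x,y]^{S_2}$ inside $\F_{-1}\ot\F_{-1}\cong\bC[x,y]$ with the two-point density action: a field $f\px$ acts by $f(x)\px+f(y)\partial_y-f'(x)-f'(y)$. The only elementary input needed is that $x-y$ divides $f(x)-f(y)$ for every $f\in\bC[x]$; from this, differentiating the factor $(x-y)^k$ restores the power, so each $(x-y)^k\bC[x,y]$ is $\VR$-stable in $\F_{-1}\ot\F_{-1}$. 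Intersecting with the symmetric part, $P_{2m}:=(x-y)^{2m}\bC[x,y]^{S_2}$ is a $\VR$-submodule of $\dU^2$.

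Next I would identify $P_{2m}=H_{2m-2}$, which is (i). Here $P_{2m}$ is an $\da$-submodule of lowest weight $2m-2$, with $(x-y)^{2m}$ spanning its lowest weight space. Since the Casimir values $q(2j)$ on the summands $G_{2j}$ are pairwise distinct, any $\da$-submodule of $\dU^2$ is the direct sum of its intersections with the $G_{2j}$; as every nonzero $\da$-submodule of $G_{2j}\cong\F_{2j}$ has lowest weight $2j$, the module $H_{2\ell}$ is the largest $\da$-submodule of $\dU^2$ of lowest weight $\ge 2\ell$. Thus $P_{2m}\subseteq H_{2m-2}$, and comparing the dimensions of the weight spaces (both equal $\lfloor w/2\rfloor-m+2$ in weight $w\ge 2m-2$) gives equality. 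For (ii), the map $(x-y)^{2\ell+2}q(x,y)\mapsto dx^{2\ell}q(x,x)$ descends to a $\VR$-map $H_{2\ell}/H_{2\ell+2}\to\F_{2\ell}$ — the terms from differentiating $(x-y)^{2\ell+2}$ remain divisible by the removed power of $x-y$ and so vanish modulo $H_{2\ell+2}$, and a one-line check identifies the surviving action with $\pi_{2\ell}$; it is injective ($q$ vanishing on the diagonal is divisible by $(x-y)^2$) and surjective ($g$ lifts to $\tfrac12(g(x)+g(y))$), hence an isomorphism.

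For (iii) the engine is a generation statement: the $\VR$-submodule generated by $(x-y)^{2m}$ is all of $P_{2m}=H_{2m-2}$ when $m\neq 1$, and is $\bC Q\oplus H_2$ when $m=1$. Indeed $e_1$ and $e_2$ send $(x-y)^{2m}$ to explicit multiples of $(x+y)(x-y)^{2m}$, of $(x+y)^2(x-y)^{2m}$, and of $xy(x-y)^{2m}$, the pertinent $2\times2$ coefficient determinant being $-4m^2+18m-12$; this has no integer root, so iterating one reaches $(x+y)^a(xy)^b(x-y)^{2m}$ for all $a,b\ge 0$, hence all of $P_{2m}$ — except that for $m=1$ one has $e_1(x-y)^2=0$, which kills the weight-one direction, so one only recovers $\bC(x-y)^2$ together with $\langle(x-y)^4\rangle_{\VR}=H_2$. (The same polynomial governs $Q^{e_2}\cdot(x-y)^{2m}=(-4m^2+18m-12)(x-y)^{2m+2}$, so $Q^{e_2}$ sends each lowest weight vector of $\dU^2$ to a nonzero multiple of the next; as $\pi_\mu(Q^{e_2})=0$ for all $\mu$, this shows at once that each two-step quotient $H_{2m}/H_{2m+4}$ is a nonsplit extension of $\F_{2m}$ by $\F_{2m+2}$.) Granting this, one lists the submodules by induction on the length of $\dU^2/H_{2N}$: if $M$ is a submodule and $m_0$ is largest with $M\subseteq H_{2m_0-2}$, then $M$ has nonzero image in $H_{2m_0-2}/H_{2m_0}\cong\F_{2m_0-2}$; using irreducibility of $\F_{2\ell}$ for $\ell\neq 0$, the structure of $\F_0$ (socle $\bC$, head $\F_1$), and the generation statement (which forbids both "diagonal" submodules and complements, since any competitor would contain the $\VR$-submodule generated by its own lowest weight vector, forced to be either all of $H_{2m_0-2}$ or $\bC Q\oplus H_2$), one concludes $M=H_{2m_0-2}$, or, when $m_0=1$ and the image in $\F_0$ is only $\bC$, $M=\bC Q\oplus H_2$. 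So the submodules form the chain $\dU^2=H_{-2}\supset H_0\supset\bC Q\oplus H_2\supset H_2\supset H_4\supset\cdots$ (together with $0$), which is (iii).

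The hard part is the generation statement — equivalently the nonsplitting of the successive extensions inside $\dU^2$; everything else reduces to the divisibility fact, a weight-space count, or routine bookkeeping. What makes it go is the arithmetic non-vanishing of $-4m^2+18m-12$ at every integer $m$, together with the exceptional relation $e_1(x-y)^2=0$ at $m=1$, which is precisely what yields the extra submodule $\bC Q\oplus H_2$ and reflects the reducibility of $\F_0$.
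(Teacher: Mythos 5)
Your proposal is correct in substance but takes a genuinely different route. The paper argues abstractly: (i) comes from PBW with $\da$ on the right; (ii) from the fact that a $\VR$-module which is $\da$-isomorphic to $\F_\lambda$ is automatically $\VR$-isomorphic to it; and (iii) from the observation that any $\VR$-submodule of $\dU^2$ is stable under $\ad(S)$, hence contains, along with the lowest weight vector at its minimal weight, all higher ones $\ad(S)^{m+1}e_{-1}^2$, after which $\da$-irreducibility of $\F_{2m}$ for $m>0$, the submodule structure of $\F_0$, and a two-dimensionality check in weight~$1$ finish the classification. You instead work in the explicit model $\S^2(\F_{-1}) \cong \bC[x,y]^{S_2}$, where $H_{2\ell}$ is identified with $(x-y)^{2\ell+2}\bC[x,y]^{S_2}$: (i) becomes a divisibility statement, (ii) restriction to the diagonal, and your computation $\ad(Q^{e_2})(x-y)^{2m} = (-4m^2+18m-12)(x-y)^{2m+2}$ (which checks out) plays exactly the role of the paper's Lemma~\ref{S formula}, with the irrationality of the roots of $-4m^2+18m-12$ replacing the paper's tracking of the coefficient of $e_{2\ell+1}e_{-1}$. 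Your model buys transparency and explicit formulas; the paper's argument buys brevity and independence of any realization.

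Two local points need repair. In (ii), the terms from differentiating the factor $(x-y)^{2\ell+2}$ do \emph{not} vanish modulo $H_{2\ell+2}$: writing $f(x)-f(y) = (x-y)f'(x) + (x-y)^2 r(x,y)$, only the $r$-part lands in $H_{2\ell+2}$ (a symmetric polynomial divisible by an odd power of $x-y$ is divisible by the next even power), while the $f'(x)$-part contributes $(2\ell+2)f'g$ to the image — precisely what converts the naive $fg'-2f'g$ into $\pi_{2\ell}(f\px)g = fg'+2\ell f'g$; discarding those terms as written would give the wrong surviving action. Second, ``iterating one reaches $(x+y)^a(xy)^b(x-y)^{2m}$ for all $a,b$'' is not justified by the single determinant at the first level; an induction over all weight spaces would be needed. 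But your own parenthetical supplies the clean fix: any $\VR$-submodule is stable under $\ad(Q^{e_2})$, so containing one lowest weight vector forces containing all subsequent ones; $\da$-irreducibility of $G_{2j}\cong\F_{2j}$ for $j\ge1$ then gives all of $H_2$ at once, and only the $G_0$- and $G_{-2}$-layers require your explicit weight-$0$ and weight-$1$ computations (where $e_1(x-y)^2=0$ and the $m=0$ determinant enter). With that reorganization — essentially the paper's (iii) with $Q^{e_2}$ in place of $S$ — together with your correct observation that $H_{2\ell}$ is the largest $\da$-submodule of minimal weight $\ge 2\ell$, the classification closes. (Minor bookkeeping: the two-step determinant is $(2m-2)(-4m^2+18m-12)$; the extra factor is harmless since you treat $m=1$ separately.)
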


\begin{proof}
By the Poincar\'e-Birkhoff-Witt theorem with $\da$ on the right, the $\VR$-submodule of $\dU^2$ generated by $H_{2\ell}$ has weights in $2\ell + \bN$.  This implies~(i).  For~(ii), elementary arguments show that any $\VR$-module which is $\da$-isomorphic to $\F_\lambda$ is in fact $\VR$-isomorphic to $\F_\lambda$.

For~(iii), suppose that $U$ is a $\VR$-submodule of $\dU^2$.  Then its minimal weight is $2\ell$ for some $\ell \in -1+\bN$, and it contains the lowest weight element $\ad(S)^{\ell+1} e_{-1}^2$.  Consequently, it contains $\ad(S)^{m+1} e_{-1}^2$ for $m \ge \ell$.  Since $\F_\mu$ is $\da$-\irr\ for $\mu \not\in -\oh \bN$, we find that if $\ell > 0$, then $U = H_{2\ell}$.

If $\ell = 0$, then $U$ contains $Q$.  Because $\F_0$ has exactly one proper non-trivial $\VR$-submodule, $\bC$, there are two possibilities: $U$ could be either $H_0$ or $\bC Q \oplus H_2$.  Note that they are distinguished by the dimension of the $1$-weight space $U_1$.

If $\ell = -1$, then $U$ contains $e_{-1}^2$.  Since $\dU^2 / H_0 \cong \F_{-2}$ is $\VR$-\irr, $U$ must surject to it.  Because $\ad(e_3) e_{-1}^2$ and $\ad(e_2 e_1) e_{-1}^2$ are not proportional, $U_1$ is $2$-dimensional.  It follows that $U = \dU^2$.  The uniseriality is now clear.
\end{proof}

\begin{cor} \label{ZQS}
\begin{enumerate}
\item[(i)]
$Q$ has a unique $\ad(e_{-1})$-preimage $Z$ in $\dU^2$:
\begin{equation*}
   Z := {\ts\frac{1}{2}} (e_1 e_0 - e_2 e_{-1} - e_1).
\end{equation*}
\smallbreak \item[(ii)]
$H_0$ is generated as a $\VR$-module by $Z$.
\smallbreak \item[(iii)]
$\bC Q \oplus H_2$ is generated as a $\VR$-module by $Q$.
\smallbreak \item[(iv)]
For $\ell \not= 0$, $H_{2\ell}$ is generated as a $\VR$-module by $\ad(S)^{m+1} e_{-1}^2$.
\smallbreak \item[(v)]
In particular, $H_2$ is generated as a $\VR$-module by $Q^{e_2}$.
\end{enumerate}
\end{cor}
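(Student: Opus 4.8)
The plan is to establish part~(i) by a small dimension count and then to read off (ii)--(v) from the uniseriality of $\dU^2$ proved in Proposition~\ref{H2ell}. For~(i): by Proposition~\ref{U2a} the weight spaces $(\dU^2)_0$ and $(\dU^2)_1$ are each $2$-dimensional, receiving one dimension from $G_{-2}\cong\F_{-2}$ and one from $G_0\cong\F_0$, since every $G_{2\ell}$ with $\ell\ge1$ has minimal weight $2\ell\ge2$ and so contributes nothing in weights $0$ and~$1$. The kernel of $\ad(e_{-1})$ on $(\dU^2)_1$ consists of lowest weight elements of weight~$1$, of which there are none by Lemma~\ref{S formula} (all lowest weight elements of $\dU^2$ have weight in $-2+2\bN$). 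Hence $\ad(e_{-1})\colon(\dU^2)_1\to(\dU^2)_0$ is an injection between spaces of equal dimension, so an isomorphism, and $Q\in(\dU^2)_0$ has a unique $\ad(e_{-1})$-preimage in $\dU^2$. A direct computation with $[e_m,e_n]=(n-m)e_{m+n}$ then confirms that the stated $Z$ satisfies $\ad(e_{-1})Z=Q$, so $Z$ is that preimage.

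For (ii)--(v) the key point is that in a uniserial module the $\VR$-submodule generated by a nonzero element is simply the smallest submodule containing it, and by Proposition~\ref{H2ell}(iii) the submodules of $\dU^2$ form the chain $\dU^2\supsetneq H_0\supsetneq\bC Q\oplus H_2\supsetneq H_2\supsetneq H_4\supsetneq\cdots$, with minimal weights $-2,0,0,2,4,\ldots$ respectively. So for a weight vector one just checks which terms of the chain contain it and takes the smallest. The element $Q$ has weight~$0$, lies in $\bC Q\oplus H_2$, and lies in no $H_{2\ell}$ with $\ell\ge1$ (those have minimal weight $\ge2$); this gives~(iii). For $\ell\ge1$ the generator $\ad(S)^{\ell+1}e_{-1}^2$ spans the lowest weight line of $G_{2\ell}$, of weight~$2\ell$, so it lies in $H_{2\ell}$ but not in $H_{2\ell+2}$, whence the generated submodule is $H_{2\ell}$; for $\ell=-1$ the generator $e_{-1}^2$ has weight~$-2$ and forces the submodule to be all of $\dU^2=H_{-2}$. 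This proves~(iv) (with $m=\ell$), and~(v) is the case $\ell=1$ together with the proportionality $Q^{e_2}=-\frac{1}{96}\ad(S)^2e_{-1}^2$ from~\eqref{Qe2}.

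Finally, for~(ii): since $Z$ has weight~$1$, the only terms of the chain that could contain it are $\dU^2$ and $H_0$, so it suffices to check that $Z\in H_0$. Here $H_0$ is $\ad(e_{-1})$-stable, the weight space $(H_0)_1$ equals $(G_0)_1$ and is $1$-dimensional, and $(H_0)_0=(G_0)_0=\bC Q$; exactly as in~(i), $\ad(e_{-1})$ is injective on $(H_0)_1$ and therefore maps it onto $\bC Q$, so some element of $(H_0)_1$ is an $\ad(e_{-1})$-preimage of $Q$. By the uniqueness in~(i) that element is $Z$, hence $Z\in H_0$ and the $\VR$-submodule it generates is $H_0$. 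The only steps that are not pure bookkeeping with the submodule chain of Proposition~\ref{H2ell} are the injectivity statements for $\ad(e_{-1})$ in weights $0$ and~$1$, and these both reduce to the single observation that $\dU^2$ has no lowest weight element of weight~$1$; I expect this to be the only place where any thought is required.
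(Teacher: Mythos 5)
Your argument is essentially the paper's: verify $\ad(e_{-1})Z=Q$ by direct computation, get uniqueness from the absence of weight-$1$ lowest weight vectors, and read (ii)--(v) off the uniserial submodule chain of Proposition~\ref{H2ell}. One small point to repair: your uniqueness statement lives inside $\dU^2$, so to conclude that the explicitly given $Z$ \emph{is} that preimage (and later, in (ii), that ``by the uniqueness in (i)'' the element of $(H_0)_1$ is $Z$) you also need to know $Z\in\dU^2$, which you never check. The fix is one line: either observe that $e_1e_0 - e_2e_{-1} - e_1 = \sym_2(e_1e_0) - \sym_2(e_2e_{-1})$, so $Z\in\dU^2$ by definition, or do as the paper does and note that all of $\dU_2(\VR)$ has no weight-$1$ lowest weight vectors (your Lemma~\ref{S formula} argument plus $\ad(e_{-1})e_1=2e_0\ne 0$), so $\ad(e_{-1})$ is injective on the full weight-$1$ space of $\dU_2(\VR)$ and the stated $Z$ must coincide with the preimages you produce inside $(\dU^2)_1$ and $(H_0)_1$.
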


\begin{proof}
For~(i), verify that $\ad(e_{-1}) Z = Q$ and use the fact that $\dU_2(\VR)$ contains no $1$-lowest weight elements.  Everything else follows from Proposition~\ref{H2ell}.
\end{proof}

\begin{rem}
In fact, $Z$ is the only preimage of $Q$ in $\dU_2(\VR)$.  It projects to a lowest weight element of the $\VR$-module $H_0 / (\bC Q \oplus H_2)$, which is isomorphic to $\F_1$.
\end{rem}

\begin{lemma} \label{ZQe2}
For $\lambda \in \bC$, $H_2$, $Z$, and $H_0$ have the following images under $\pi_\lambda$:
\begin{equation*}
   \pi_\lambda(H_2) = 0, \qquad \pi_\lambda(Z) = q(\lambda) x,
   \qquad \pi_\lambda(H_0) = \pi_\lambda(G_0) = q(\lambda) \bC[x].
\end{equation*}
\end{lemma}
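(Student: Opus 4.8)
The plan is to treat the three images separately, using the $\VR$-module generators of $H_2$ and $H_0$ furnished by Corollary~\ref{ZQS} together with the fact that $\pi_\lambda$ carries $\dU(\VR)$ into $\bC[x,\px]$ as an algebra map, sending adjoint brackets to commutators; here $\pi_\lambda(e_n) = x^{n+1}\px + \lambda(n+1)x^n$ on $\F_\lambda = dx^\lambda\bC[x]$, and $\pi_\lambda(Q) = q(\lambda)$ by~\eqref{Qq}. For $\pi_\lambda(H_2) = 0$: by Proposition~\ref{U2a} each summand $G_{2\ell}$ of $H_2 = G_2 \oplus G_4 \oplus \cdots$ is an $\da$-submodule of $\dU(\VR)$, hence a $\db$-invariant subspace, and it is $\db$-isomorphic to $\F_{2\ell}$ with $2\ell > 0$; so Lemma~\ref{F action} gives $\pi_\lambda(G_{2\ell}) = 0$ for each $\ell \ge 1$, and summing over $\ell$ yields $\pi_\lambda(H_2) = 0$. (Equivalently, by Corollary~\ref{ZQS}(v) the module $H_2$ is generated over $\VR$ by $Q^{e_2} = \ad(e_2)Q$, and $\pi_\lambda(Q^{e_2}) = [\pi_\lambda(e_2), q(\lambda)] = 0$ because $q(\lambda)$ is a scalar; applying $\pi_\lambda$ to iterated adjoint brackets of $Q^{e_2}$ then annihilates all of $H_2$.)

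For $\pi_\lambda(Z) = q(\lambda)x$, the direct route is to substitute $\pi_\lambda(e_{-1}) = \px$, $\pi_\lambda(e_0) = x\px + \lambda$, $\pi_\lambda(e_1) = x^2\px + 2\lambda x$, $\pi_\lambda(e_2) = x^3\px + 3\lambda x^2$ into $Z = \oh(e_1 e_0 - e_2 e_{-1} - e_1)$ (Corollary~\ref{ZQS}(i)) and observe that the order-two and order-one terms cancel, so that $\pi_\lambda(e_1 e_0 - e_2 e_{-1} - e_1) = 2q(\lambda)x$ and hence $\pi_\lambda(Z) = q(\lambda)x$. A computation-free variant: $\pi_\lambda(Z) \in \bC[x,\px]$ has order $\le 2$ and polynomial coefficients, and since $[e_0, Z] = Z$ it satisfies $[x\px, \pi_\lambda(Z)] = \pi_\lambda(Z)$, forcing $\pi_\lambda(Z) = c_0 x + c_1 x^2\px + c_2 x^3\px^2$; then $\ad(e_{-1})Z = Q$ gives $[\px, \pi_\lambda(Z)] = q(\lambda)$, i.e. $c_0 + 2c_1 x\px + 3c_2 x^2\px^2 = q(\lambda)$, so $c_1 = c_2 = 0$ and $c_0 = q(\lambda)$.

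For $\pi_\lambda(H_0) = \pi_\lambda(G_0) = q(\lambda)\bC[x]$: since $H_0 = G_0 \oplus H_2$ and $\pi_\lambda(H_2) = 0$, the first equality is automatic. By Corollary~\ref{ZQS}(ii), $H_0$ is generated over $\VR$ by $Z$, so $\pi_\lambda(H_0)$ is the subspace of $\bC[x,\px]$ obtained from $\pi_\lambda(Z) = q(\lambda)x$ by taking iterated commutators with the operators $\pi_\lambda(e_n)$. The identity $[\pi_\lambda(e_n), x^m] = m\,x^{n+m}$ (for $n \ge -1$, $m \ge 0$) shows that the multiplication operators $\bC[x]$ form an invariant subspace on which this action is precisely the $\F_0$-action $\pi_0$; hence $\pi_\lambda(H_0) \subseteq q(\lambda)\bC[x]$, and since the weight-$1$ vector $x$ generates $\F_0$ over $\VR$ (apply $\px$ to obtain $1$, and iterate $x^2\px$ to obtain every higher power), equality holds. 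This reads as $\pi_\lambda(H_0) = 0$ when $q(\lambda) = 0$ and as $\pi_\lambda(H_0) = \bC[x]$ otherwise.

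The one spot calling for care is the last paragraph: one must separate the case $q(\lambda) = 0$, in which $\pi_\lambda(Z) = 0$ and everything collapses to zero, from the case $q(\lambda) \ne 0$, where both inclusions between $\pi_\lambda(H_0)$ and $q(\lambda)\bC[x]$ must be checked; the identification $\pi_\lambda(G_0) = \pi_\lambda(H_0)$ and the two arguments for $H_2$ are routine once Corollary~\ref{ZQS} is available.
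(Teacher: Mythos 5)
Your proof is correct and follows essentially the same route as the paper: $\pi_\lambda(H_2)=0$ via Lemma~\ref{F action} applied to the summands $G_{2\ell}$, the value $\pi_\lambda(Z)=q(\lambda)x$ pinned down by the weight and the relation $\ad(e_{-1})Z=Q$ (the paper phrases this as $(\bC[x,\px])_1^{e_{-1}}=0$, which is exactly your ``computation-free variant''), and $\pi_\lambda(H_0)=q(\lambda)\bC[x]$ by acting with the $e_n$ on the $Z$-formula. Your explicit treatment of the case $q(\lambda)=0$ and of both inclusions is a slightly more careful writeup of the same argument, and the direct substitution check of $\pi_\lambda(Z)$ is also correct.
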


\begin{proof}
For $H_2$, use Lemma~\ref{F action}.  For $Z$, combine \eqref{Qq} with the fact that $\big( \bC[x, \px] \big)_1^{e_{-1}} = 0$.  For~$H_0$, act by $e_1$ repeatedly on the $Z$-action formula.
\end{proof}

\section{Annihilators over $\VR$ generated in degree~2} \label{F_0 F_Lambda}

In this section we use Proposition~\ref{machine} to give clarified proofs of two of the results of \cite{CM07}.  As discussed in the introduction, the approach taken here is adaptable to more general vector field Lie algebras and superalgebras.

We begin with some elementary results on duals.  In the context of admissible \r s, they apply also to restricted duals.  We omit the proofs.

\begin{defn}
Let $\dg$ be a Lie algebra.  The transpose anti-involution of $\dU(\dg)$ is
\begin{equation*}
   \Theta \mapsto \Theta^T, \qquad (X_1 \cdots X_k)^T := (-1)^k X_k \cdots X_1.
\end{equation*}
\end{defn}

\begin{lemma} \label{transpose parity}
Transposition is $\ad$-invariant and acts by $(-1)^k$ on $\sym_k(\S^k(\dg))$.
\end{lemma}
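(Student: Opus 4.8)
The plan is to reduce everything to the single structural fact, implicit in the Definition, that transposition is the unique anti-automorphism $T$ of $\dU(\dg)$ restricting to $-\mathrm{id}$ on $\dg$; I would establish this first and then let formal consequences do the rest. Concretely, $X \mapsto -X$ is a Lie algebra homomorphism from $\dg$ into the underlying Lie algebra of $\dU(\dg)^{\mathrm{op}}$ (one checks $[-X,-Y]_{\mathrm{op}} = -[X,Y]$, which is the image of $[X,Y]$), so by the universal property it extends to an algebra homomorphism $\dU(\dg) \to \dU(\dg)^{\mathrm{op}}$, i.e.\ an anti-endomorphism of $\dU(\dg)$; since $T^2$ is an algebra endomorphism fixing $\dg$ pointwise, $T^2 = \mathrm{id}$, so $T$ is an anti-automorphism. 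A one-line induction on degree confirms it agrees with the stated formula $(X_1\cdots X_k)^T = (-1)^k X_k \cdots X_1$.

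For $\ad$-invariance I would compute directly: for $X \in \dg$ and $\Theta \in \dU(\dg)$, using that $T$ reverses products and $X^T = -X$,
\begin{equation*}
   (\ad(X)\Theta)^T = (X\Theta - \Theta X)^T = \Theta^T X^T - X^T \Theta^T = X\Theta^T - \Theta^T X = \ad(X)(\Theta^T).
\end{equation*}
Hence $T$ commutes with the adjoint action of $\dg$ on $\dU(\dg)$; being an involution, it is in particular a $\dg$-module automorphism, which is what ``$\ad$-invariant'' means here.

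For the second assertion I would apply $T$ termwise to $\sym_k(X_1 \cdots X_k) = \frac{1}{k!}\sum_{\sigma \in S_k} X_{\sigma(1)} \cdots X_{\sigma(k)}$, obtaining $\frac{(-1)^k}{k!}\sum_{\sigma} X_{\sigma(k)} \cdots X_{\sigma(1)}$, and then reindex by the order-reversing substitution $\tau(i) := \sigma(k+1-i)$, which again runs over all of $S_k$; this rewrites the sum as $(-1)^k \sym_k(X_1 \cdots X_k)$. Since the products $X_1 \cdots X_k$ span $\S^k(\dg)$ and $\sym_k$ is linear, $T$ acts by $(-1)^k$ on all of $\sym_k(\S^k(\dg))$. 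There is no genuine obstacle in this argument; the only point that deserves a moment's care is the well-definedness of $T$ as an anti-automorphism, which is precisely why I would dispatch it at the outset.
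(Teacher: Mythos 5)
Your proof is correct. The paper in fact omits the proof of this lemma (it is stated among ``elementary results'' whose proofs are skipped), and your argument---constructing $\Theta \mapsto \Theta^T$ via the universal property of $\dU(\dg)$ applied to $X \mapsto -X$, checking $(\ad(X)\Theta)^T = \ad(X)(\Theta^T)$ directly, and reindexing the symmetrizer sum by the order-reversing permutation to get the factor $(-1)^k$ on $\sym_k(\S^k(\dg))$---is exactly the standard argument the authors intend, with the well-definedness point handled cleanly at the outset.
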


\begin{lemma} \label{transpose}
For any \r\ $(\pi, V)$ of a Lie algebra $\dg$, the annihilator of its dual is the transpose of its annihilator:
\begin{equation*}
   \Ann_\dg(V^*) = \Ann_\dg(V)^T.
\end{equation*}
\end{lemma}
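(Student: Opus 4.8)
The statement to prove is Lemma~\ref{transpose}: for any representation $(\pi,V)$ of a Lie algebra $\dg$, one has $\Ann_\dg(V^*) = \Ann_\dg(V)^T$.

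The plan is as follows. First I would set up the action on the dual space explicitly. If $\pi:\dg\to\End(V)$ is the given representation, the dual (contragredient) representation $\pi^*$ on $V^*$ is defined by $\langle \pi^*(X)\phi, v\rangle = -\langle \phi, \pi(X)v\rangle$ for $X\in\dg$, $\phi\in V^*$, $v\in V$; the sign is what makes this a Lie algebra homomorphism. Extending to the universal enveloping algebra, the key identity is that for any $\Theta\in\dU(\dg)$, the operator $\pi^*|_{\dU(\dg)}(\Theta)$ equals the transpose (adjoint) linear map of $\pi|_{\dU(\dg)}(\Theta^T)$ on $V$. I would prove this by induction on the degree (length) of a monomial $\Theta = X_1\cdots X_k$: the base case $k=0,1$ is the definition, and the inductive step uses that transpose of linear maps reverses products, $(AB)^{\mathrm{t}} = B^{\mathrm{t}}A^{\mathrm{t}}$, matching the reversal $(X_1\cdots X_k)^T = (-1)^k X_k\cdots X_1$ together with the sign bookkeeping. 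Since both sides are linear in $\Theta$, this extends from monomials to all of $\dU(\dg)$.

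Next, given that identity, the conclusion is immediate: $\Theta \in \Ann_\dg(V^*)$ means $\pi^*|_{\dU(\dg)}(\Theta) = 0$, which by the identity means the transpose linear map of $\pi|_{\dU(\dg)}(\Theta^T)$ is zero, hence $\pi|_{\dU(\dg)}(\Theta^T) = 0$ (a linear map is zero iff its transpose is), i.e. $\Theta^T \in \Ann_\dg(V)$, i.e. $\Theta \in \Ann_\dg(V)^T$. The reverse inclusion follows since transposition is an involution ($(\Theta^T)^T = \Theta$, which one can check on monomials using $(-1)^k(-1)^k = 1$ and double reversal), so $\Ann_\dg(V)^T$ is exactly the set of such $\Theta$. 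In the admissible setting, one replaces $V^*$ by the restricted dual (the span of the weight functionals); the same computation works verbatim since the restricted dual is a submodule of the full dual closed under the $\dg$-action, and the pairing between weight spaces of $V$ and the restricted dual is nondegenerate, so a linear map on the restricted dual vanishes iff its transpose on $V$ vanishes.

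The only mild subtlety — and the one step I would be careful with — is the sign in the induction: one must verify that the $(-1)^k$ appearing in the definition of $\Theta^T$ on a length-$k$ monomial is precisely the sign generated by iterating the contragredient action $k$ times (each application of $\pi^*(X)$ contributes a factor $-1$), and that it is consistent with the order reversal forced by the transpose-of-a-product rule. This is purely bookkeeping but it is where an error would hide; everything else is formal. Since the excerpt says ``We omit the proofs,'' I would in the actual paper simply remark that this follows by a straightforward induction on degree from the defining identity $\langle\pi^*(\Theta)\phi,v\rangle = \langle\phi,\pi(\Theta^T)v\rangle$, together with the fact that transposition is an involution.
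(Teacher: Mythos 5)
Your proof is correct, and since the paper explicitly omits the proofs of these dual-module lemmas, your argument is exactly the standard one the authors have in mind: the identity $\langle\pi^*(\Theta)\phi,v\rangle=\langle\phi,\pi(\Theta^T)v\rangle$, verified on monomials with the sign bookkeeping you describe, plus the involutivity of transposition and the fact that the (restricted) dual separates points of an admissible module. Nothing further is needed.
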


\begin{thm}[\cite{CM07}] \label{I0}
$\Ann_{\VR}(\F_1) = \Ann_{\VR}(\F_0) = \la Z \ra$.
\end{thm}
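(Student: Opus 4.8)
The plan is to verify the hypotheses of Proposition~\ref{machine} with $d = 2$, the representation $\pi = \pi_0$ (acting on $\F_0 = \bC[x]$), the subspace $J := \da$, and the candidate annihilator subspace $I := \la Z \ra \cap \dU_2(\VR)$, which by Corollary~\ref{ZQS}(ii) is the $\VR$-module $H_0$ together with the lower-degree pieces $\bC \oplus \VR$. For $\J$ we take a cross-section whose degree-$k$ piece projects onto $\S^{k-1}(\da)\dg$; concretely, $\J$ should be built from the symmetrizer images of products of at most one factor outside $\da$, i.e. $\J \cap \dU_k$ projects onto $\S^{k-1}(\da)\VR$, which has the shape $\S^{k-d+1}(J)\S^{d-1}(\dg)$ required by condition~(c) when $d=2$.

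First I would check that $\la Z \ra$ really does annihilate $\F_0$: by Lemma~\ref{ZQe2}, $\pi_\lambda(Z) = q(\lambda)x$ and $\pi_\lambda(H_0) = q(\lambda)\bC[x]$, and since $q(0) = 0$ we get $\pi_0(Z) = 0$, so $\la Z\ra \subseteq \Ann_{\VR}(\F_0)$; the same computation with $\lambda = 1$, together with Lemma~\ref{transpose} and the self-duality $\F_0^* \cong \F^-_1$ paired with Proposition~\ref{no a}, handles the $\F_1$ side once the $\F_0$ case is done (or one argues directly that $\pi_1(Z) = q(1)x = 0$). Next, condition~(d): I must show $\proj_\da \circ \proj_2$ maps $I$ onto $\S^2(\VR/\da)$. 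The key point is that $H_0$ (or rather $\dU^2$ modulo $H_0$) is controlled by Proposition~\ref{H2ell}(iii): $\dU^2$ is uniserial with $\dU^2/H_0 \cong \F_{-2}$, and $H_0/(\bC Q \oplus H_2) \cong \F_1$. Since $\proj_2$ identifies $\dU^2$ with $\S^2(\F_{-1}) \cong \S^2(\VR)$ and $\S^2(\VR/\da)$ is the quotient of $\S^2(\VR)$ by $\da\cdot\VR$, I need that the composite $H_0 \hookrightarrow \S^2(\VR) \twoheadrightarrow \S^2(\VR/\da)$ is onto; equivalently that $H_0 + \da\S^1(\VR) + \dU_1 = \dU_2$, which is Lemma~\ref{consequences}-type bookkeeping using that $\dU^2/H_0$ and the $\F_1$ layer below the top of $H_0$ exhaust the part of $\S^2(\VR)$ lying in $\da\cdot\VR$. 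Condition~(a), injectivity of $\pi_0$ on $\J$, follows because $\pi_0$ is the standard action on $\bC[x]$ by the operators $x^{n+1}\px$, and a degree count shows the symmetrized monomials in $\J$ have linearly independent images; condition~(b) is immediate from the construction of $\J$, and condition~(c) is the defining property of $\J$.

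The main obstacle I expect is condition~(d) combined with getting $I$ exactly right: one has to be careful that $I = \Ann_{\VR}(\F_0)\cap\dU_2$ is not larger than $\la Z\ra\cap\dU_2$, i.e. that no element of $\dU^2$ outside $H_0$ — in particular nothing projecting nontrivially to the top layer $\F_{-2}$ — annihilates $\F_0$. This is where Lemma~\ref{ZQe2} is essential: $\pi_0$ is nonzero on any submodule of $\dU^2$ not contained in $H_0$, because such a submodule is all of $\dU^2$ by uniseriality, and $\pi_0(e_{-1}^2) = \px^2 \neq 0$. Combined with the observation that $\pi_0$ on $\dU_1 = \bC\oplus\VR$ has kernel exactly $0\oplus 0$ (the only lower-degree relation would be a constant or a vector field killing all polynomials, and there is none beyond what the grading forces), this pins down $I = H_0 = \la Z\ra \cap \dU_2$. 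Once (a)--(d) are verified, Proposition~\ref{machine}(iii) gives $\Ann_{\VR}(\F_0) = \la I\ra = \la Z\ra$ directly, since $\la Z\ra$ is generated in degree~$2$ by $H_0$'s generator $Z$; and the $\F_1$ equality then follows from Proposition~\ref{no a} (as $\F_1 = \F_{0,1}$... more precisely from $\F_1^* \cong \F^-_0$, Lemma~\ref{transpose}, Lemma~\ref{transpose parity} showing $\la Z\ra$ is transpose-stable up to the degree~$2$ sign, and Proposition~\ref{no a}). I would close by noting the parallel computation $\pi_1(Z) = q(1)x = 0$ gives the $\F_1$ inclusion without duality if one prefers a self-contained argument.
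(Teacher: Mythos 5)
Your overall strategy is the paper's strategy (run Proposition~\ref{machine} with $d=2$, $I$ essentially $H_0$, and deduce the $\F_1$ case by duality/transpose), but the choice $J := \da$ is a genuine error that breaks the machine. First, conditions (a)--(c) become incompatible for $\pi_0$: if $\J \cap \dU_2(\VR)$ projects onto $\S^1(\da)\vtsp\S^1(\VR) \supseteq \S^2(\da)$ and $\dU_1(\VR) \subset \J$, then $\J$ is forced to contain the Casimir element $Q$ (any lift of $\proj_2(Q)$ differs from $Q$ by an element of $\dU_1(\VR) \subset \J$), and $\pi_0(Q) = q(0) = 0$, so $\pi_0$ is not injective on $\J$. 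Second, condition (d) cannot hold either: by Lemma~\ref{consequences}(iii), (a)--(c) would force $\proj_J \circ \proj_2$ to be injective on $I = H_0$, hence surjectivity onto $\S^2(\VR/\da)$ would require equal weight-space dimensions; but $H_0 \acong \F_0 \oplus \F_2 \oplus \F_4 \oplus \cdots$ while $\VR/\da \bcong \F_2$, so $\S^2(\VR/\da) \bcong \F_4 \oplus \F_6 \oplus \cdots$ by Lemma~\ref{Sym F}, and the dimensions do not match (e.g.\ in weight $4$ they are $3$ versus $1$). The paper instead takes $J := \dc = \bC e_{-1}$ and $\J := \dU_1(\VR)\vtsp\dU(\dc)$: then (a) is the observation that the monomials $e_n e_{-1}^k$ act on $\bC[x]$ by the independent operators $x^{n+1}\px^{k+1}$, and (d) is the clean count comparing $H_0$ with $\S^2(\VR/\dc) \bcong \S^2(\F_0) \bcong \F_0 \oplus \F_2 \oplus \F_4 \oplus \cdots$. (Intuitively, $J$ must be small enough that $\S^{k-1}(J)\S^1(\VR)$ avoids the degree-$2$ annihilator, which contains $Q$; $\da$ is too big, and for the universal module in Theorem~\ref{I2} the right choice is $\db$.)

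Two smaller points. Your $I := \la Z \ra \cap \dU_2(\VR)$ is not ``$H_0$ together with $\bC \oplus \VR$'': Proposition~\ref{machine} requires $I \subseteq \Ann_{\VR}(V) \cap \dU_2(\VR)$, and neither constants nor vector fields annihilate $\F_0$; the correct input is simply $I := H_0$, which lies in the annihilator by Lemma~\ref{ZQe2} and Corollary~\ref{ZQS}(ii) since $q(0)=0$. Also, the direct computation $\pi_1(Z) = q(1)x = 0$ only gives the inclusion $\la Z \ra \subseteq \Ann_{\VR}(\F_1)$; for equality you either rerun the machine for $\pi_1$ or, as in the paper, use \eqref{dual}, Proposition~\ref{no a}, and Lemmas~\ref{transpose parity} and~\ref{transpose} (transposition fixes $Z \in \dU^2$, so $\la Z \ra$ is transpose-stable and the two annihilators coincide).
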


\begin{proof}
For the first equality, combining \eqref{dual}, Proposition~\ref{no a}, and Lemma~\ref{transpose} shows that $\Ann_{\VR}(\F_1)$ is the transpose of $\Ann_{\VR}(\F_0)$.  Since $Z \in \dU^2$, it is transpose-invariant by Lemma~\ref{transpose parity}, so it suffices to prove the second equality.  

By Lemma~\ref{ZQe2}, $Z$ annihilates $\F_0$.  Hence Corollary~\ref{ZQS}(ii) implies that $H_0$ annihilates $\F_0$.  Recall the constant subalgebra $\dc$ from~\eqref{abc}.  The ingredients for Proposition~\ref{machine} are $d := 2$, $I := H_0$, $J := \dc$, and
\begin{equation*}
   \J := \dU_1(\VR) \dU(\dc) = \spanrm \big\{ e_n e_{-1}^k, e_{-1}^k: n, k \in \bN \big\}.
\end{equation*}

We must check that conditions~(a)-(d) of Proposition~\ref{machine} hold.  For~(a), note that the monomials $e_n e_{-1}^k$ act by the monomial basis elements $x^{n+1} \px^{k+1}$ of $\bC[x, \px]$.  Conditions~(b) and~(c) are clear.

For~(d), by Lemma~\ref{consequences}(iii) it suffices to show that $H_0$ and $\S^2((\VR)/\dc)$ have the same weight space dimensions.  By definition, $H_0$ is $\da$-isomorphic to $\oplus_0^\infty \F_{2\ell}$.  By Lemma~\ref{Sym F}, $\S^2(\F_0)$ is $\db$-isomorphic to $\oplus_0^\infty \F_{2\ell}$ (in fact, $\da$-isomorphic, by Casimir eigenvalues).  To complete the proof, note that $(\VR)/\dc$ is $\db$-isomorphic to $\F_0$.
\end{proof}

The following definition will be useful both here and subsequently.

\begin{defn}
Let $\Lambda$ be a central indeterminate.  The \textit{universal \tdm} $\F_\Lambda$ of $\VR$ and its action $\pi_\Lambda$ are given by
\begin{equation*}
   \F_\Lambda := dx^\Lambda \bC[\Lambda, x], \qquad
   \pi_\Lambda(f \px) (dx^\Lambda g):= dx^\Lambda (fg' + \Lambda f'g).
\end{equation*}
\end{defn}

\begin{thm}[\cite{CM07}] \label{I2}
$\bigcap_{\lambda \in \bC} \Ann_{\VR}(\F_\lambda) = \la Q^{e_2} \ra$.
\end{thm}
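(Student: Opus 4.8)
The plan is to apply Proposition~\ref{machine} to the module $V := \F_\Lambda$, exactly as was done for $\F_0$ in Theorem~\ref{I0}, but now with $d := 2$ and $I := H_2$. First I would verify that $H_2$ annihilates $\F_\Lambda$: by Corollary~\ref{ZQS}(v), $H_2$ is generated as a $\VR$-module by $Q^{e_2}$, and since $\pi_\Lambda(Q^{e_2})$ agrees with $\pi_\lambda(Q^{e_2})$ for all $\lambda$, Lemma~\ref{ZQe2} gives $\pi_\Lambda(Q^{e_2}) = 0$, hence $\pi_\Lambda(H_2) = 0$. (Alternatively, one can invoke Lemma~\ref{F action} directly, since $H_2$ is a quotient-free $\db$-module that is a direct sum of $\F_{2\ell}$ with $\ell \ge 1$; but the $\VR$-generation statement is cleaner.) Since $\bigcap_\lambda \Ann_{\VR}(\F_\lambda) = \Ann_{\VR}(\F_\Lambda)$ — any polynomial differential operator vanishing for all $\lambda \in \bC$ vanishes identically as a polynomial in $\Lambda$, and conversely — it suffices to prove that $\Ann_{\VR}(\F_\Lambda) = \la H_2 \ra = \la Q^{e_2} \ra$.

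Next I would set up the cross-section. The natural choice, parallel to Theorem~\ref{I0}, is $J := \db$ (the affine subalgebra, $\spanrm\{e_{-1}, e_0\}$) and
\begin{equation*}
   \J := \dU_1(\VR) \dU(\db) = \spanrm \big\{ e_n e_0^j e_{-1}^k,\ e_0^j e_{-1}^k : n, j, k \in \bN \big\}.
\end{equation*}
Condition~(b) of Proposition~\ref{machine} is immediate, and condition~(c) follows from the PBW theorem with $\db$ on the right: $\proj_k$ sends $\J \cap \dU_k(\VR)$ onto $\S^{k-1}(J)\,\S^1(\VR)$, which is $\S^{k-1}(J)\,\S^{d-1}(\VR)$ with $d=2$. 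For condition~(a), injectivity of $\pi_\Lambda$ on $\J$: the monomials $e_n e_0^j e_{-1}^k$ act on $dx^\Lambda \bC[\Lambda, x]$ and I would check that their images are linearly independent over $\bC$ by examining leading behavior — $\pi_\Lambda(e_{-1}) = \px$ and $\pi_\Lambda(e_0) = x\px + \Lambda$, so $\pi_\Lambda(e_0^j e_{-1}^k)$ is, up to lower-order-in-$\px$ terms, $(x\px)^j \px^k$ plus $\Lambda$-dependent corrections, and the extra factor $e_n = x^{n+1}\px$ on the left is detected by the power of $x$; the presence of the free indeterminate $\Lambda$ only helps. This is the routine but slightly fussy step.

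Finally, condition~(d): $\proj_J \circ \proj_2$ must map $I = H_2$ \emph{onto} $\S^2(\VR/\db)$. By Lemma~\ref{consequences}(iii) the map is automatically injective once (a)--(c) hold, so I only need surjectivity, which by a dimension/character count reduces to showing $H_2$ and $\S^2(\VR/\db)$ have the same weight-space dimensions. Now $H_2$ is $\da$-isomorphic to $\bigoplus_{\ell \ge 1} \F_{2\ell}$ by Proposition~\ref{U2a} and the definition of $H_{2\ell}$, while $\VR/\db$ is $\db$-isomorphic to $\F_1$ (it is spanned by the images of $e_1, e_2, \ldots$, a lowest weight module with lowest weight $1$ and $e_{-1}$ acting surjectively, hence $\cong \F_1$ by the argument in Lemma~\ref{Tensor F}). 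By Lemma~\ref{Sym F}, $\S^2(\F_1) \bcong \bigoplus_{n \ge 0} q_2(n)\,\F_{n+2}$, and $q_2(n)$ is the number of partitions of $n$ into parts of size exactly $2$, i.e.\ $q_2(n) = 1$ if $n$ is even and $0$ if $n$ is odd; thus $\S^2(\F_1) \bcong \bigoplus_{\ell \ge 1} \F_{2\ell}$ as well. Hence the weight-space dimensions match and (d) holds.

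The main obstacle I anticipate is \emph{not} the verification of conditions (a)--(d) — those are straightforward given the machinery already assembled — but rather being careful about the role of the indeterminate $\Lambda$. One must ensure that working over the polynomial ring $\bC[\Lambda]$ rather than over $\bC$ does not disturb the injectivity argument in~(a), and that the identification $\bigcap_\lambda \Ann_{\VR}(\F_\lambda) = \Ann_{\VR}(\F_\Lambda)$ is genuinely valid (it is, because a polynomial in $\Lambda$ with coefficients in the Weyl algebra that vanishes at every $\lambda \in \bC$ is the zero polynomial, $\bC$ being infinite). Once Proposition~\ref{machine} applies, conclusions (i)--(iii) give immediately that $\Ann_{\VR}(\F_\Lambda)$ is generated in degree~$2$, equals $\dU_2(\VR) \cap \Ann_{\VR}(\F_\Lambda) = H_2$ there, and is generated by $H_2$, hence by $Q^{e_2}$ via Corollary~\ref{ZQS}(v); combined with the first paragraph this yields $\bigcap_\lambda \Ann_{\VR}(\F_\lambda) = \la Q^{e_2} \ra$.
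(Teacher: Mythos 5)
Your route is precisely the paper's: the same ingredients $d=2$, $I=H_2$, $J=\db$, $\J=\dU_1(\VR)\dU(\db)$ for Proposition~\ref{machine} applied to $\F_\Lambda$, the same identification $\bigcap_\lambda \Ann_{\VR}(\F_\lambda)=\Ann_{\VR}(\F_\Lambda)$, and the same reduction of condition~(d) to a weight-space dimension count via Lemma~\ref{consequences}(iii), Lemma~\ref{Sym F} (with $q_2$), and $\VR/\db\bcong\F_1$, $H_2\acong\bigoplus_{\ell\ge1}\F_{2\ell}$. So in outline the proposal is correct and essentially identical to the paper's proof.

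The one place where it falls short is exactly the step you flag as ``routine but slightly fussy'': the verification of condition~(a). The leading-behavior argument you sketch -- take the $\px$-leading term of $\pi_\Lambda(e_n e_0^j e_{-1}^k)$ and detect $e_n$ by the power of $x$ -- does not work, because distinct basis monomials of $\J$ of the same weight and degree have identical $\px$-leading symbols: for instance $\pi_\Lambda(e_1e_0)$ and $\pi_\Lambda(e_2e_{-1})$ both have $\px$-leading part $x^3\px^2$, and more generally $e_\mu e_0^{k-1},\, e_{\mu+1}e_0^{k-2}e_{-1},\,\ldots$ all lead with $x^{\mu+k}\px^{k}$. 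Saying that ``the presence of the free indeterminate $\Lambda$ only helps'' is not yet an argument; the indeterminate $\Lambda$ is in fact the whole point, and it has to be used quantitatively. The paper's fix is to filter $\bC[\Lambda,x,\px]$ by the \emph{total} $(\Lambda,\px)$-degree, check that $\pi_\Lambda$ maps $\dU_k(\VR)$ into total degree $\le k$, compute the resulting symbols of $\pi_\Lambda(e_0^{k_0}e_{-1}^{k_{-1}})$ and $\pi_\Lambda(e_n e_0^{k_0}e_{-1}^{k_{-1}})$ as in \eqref{symbols}, and observe that along the basis elements of fixed weight and degree the $\Lambda$-degree occurring in the symbol strictly decreases, which forces linear independence of the symbols and hence injectivity on $\J$. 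You would need to supply this (or an equivalent) computation; with it in place, the rest of your argument goes through exactly as in the paper.
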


\begin{proof}
Clearly $\bigcap_{\lambda \in \bC} \Ann_{\VR}(\F_\lambda)$ is the annihilator of both $\bigoplus_\lambda \F_\lambda$ and $\F_\Lambda$.  By Lemma~\ref{ZQe2}, it contains $H_2$.  The ingredients for Proposition~\ref{machine} applied to $\F_\Lambda$ are $d := 2$, $I := H_2$, $J := \db$, and
\begin{equation*}
   \J := \dU_1(\VR) \dU(\db) =
   \spanrm \big\{ e_n e_0^{k_0} e_{-1}^{k_{-1}}, e_0^{k_0} e_{-1}^{k_{-1}}:
   n \in \bZ^+,\, k_0, k_{-1} \in \bN \big\}.
\end{equation*}

We must check that conditions~(a)-(d) of Proposition~\ref{machine} hold.  For~(a), observe that $\pi_\Lambda$ maps $\dU(\VR)$ into the algebra $\bC[\Lambda, x, \px]$.  Consider the total $(\Lambda, \px)$ degree on this algebra.  Check that it defines a filtration compatible with multiplication, and that $\pi_\Lambda$ maps $\dU_k(\VR)$ to operators of total degree~$\le k$.  Consequently, to prove that $\pi_\Lambda$ is injective on $\J$, it is enough to prove that the $(\Lambda, \px)$-symbols of the $\pi_\Lambda$-images of the given basis elements of $\J$ are linearly independent.

For reference, $\pi_\Lambda(e_0^{k_0} e_{-1}^{k_{-1}})$ and $\pi_\Lambda(e_n e_0^{k_0} e_{-1}^{k_{-1}})$ have the following $(\Lambda, \px)$-symbols:
\begin{align} \label{symbols}
   & \pi_\Lambda(e_0^{k_0} e_{-1}^{k_{-1}}) \equiv
   \sum_{\ell = 0}^{k_0}
   \binom{k_0}{\ell}x^{k_0 - \ell} \Lambda^\ell \px^{k_0 + k_{-1} - \ell},
   \nonumber \\[4pt]
   & \pi_\Lambda(e_n e_0^{k_0} e_{-1}^{k_{-1}}) \equiv
   x^{n + k_0 + 1} \px^{k_0 + k_{-1} + 1}
   + (n + 1) x^n \Lambda^{k_0 + 1} \px^{k_{-1}}
   \\[4pt] \nonumber
   & \qquad + \sum_{\ell = 1}^{k_0}
   \bigg[ \binom{k_0}{\ell} + (n + 1) \binom{k_0}{\ell - 1} \bigg]
   x^{n + k_0 - \ell + 1} \Lambda^\ell \px^{k_0 + k_{-1} - \ell + 1}.
\end{align}

Consider the basis elements of $\J$ of weight~$\mu$ and degree~$k > 0$.  These are
\begin{align*}
   & e_\mu e_0^{k-1},\, e_{\mu+1} e_0^{k-2} e_{-1},\, e_{\mu+2} e_0^{k-3} e_{-1}^2,\,
   \ldots,\, e_{\mu+k-1} e_{-1}^{k-1} \ \mathrm{\ for\ }\ \mu > 0, \\[4pt]
   & e_0^{k+\mu} e_{-1}^{-\mu},\, e_1 e_0^{k-1+\mu} e_{-1}^{1-\mu},\, \ldots,\,
   e_{k-1+\mu} e_{-1}^{k-1} \ \mathrm{\ for\ }\ 0 \ge \mu \ge -k.
\end{align*}
Using \eqref{symbols}, one finds that in both lists the $\Lambda$-degree of the $(\Lambda, \px)$-symbol decreases by~$1$ with each element, from $k$ to $1$ for $\mu > 0$, and from $k+\mu$ to $1$ for $\mu \le 0$.  Therefore the symbols are independent.

Conditions~(b) and~(c) are clear.  Treat~(d) as in Theorem~\ref{I0}: by Lemma~\ref{consequences}(iii), it suffices to show that $H_2$ and $\S^2((\VR)/\db)$ have the same weight space dimensions.  By definition, $H_2$ is $\da$-isomorphic to $\oplus_1^\infty \F_{2\ell}$.  By Lemma~\ref{Sym F}, $\S^2(\F_1)$ is $\db$-isomorphic to $\oplus_1^\infty \F_{2\ell}$.  Finally, $(\VR)/\db$ is $\db$-isomorphic to $\F_1$.
\end{proof}

\section{Annihilators generated in multiple degrees} \label{Multiple}

In this section, the method given in Section~\ref{Single} is adapted to annihilators generated in more than one degree.  As before, it is applicable only under strong conditions.

\begin{prop} \label{multimachine}
Let $\pi$ be a \r\ of $\dg$ on a space $V$ such that there exist
\begin{itemize}
\item
positive integers~$d_1 < d_2 < \cdots < d_r$;
\smallbreak \item
subspaces $J_{d_1} \supset J_{d_2} \supset \cdots \supset J_{d_r}$ of $\dg$;
\smallbreak \item
a subspace $\J$ of $\dU(\dg)$;
\smallbreak \item
a subspace $I$ of $\Ann_\dg(V) \cap \dU_{d_r}(\dg)$;
\end{itemize}
satisfying the following conditions:
\begin{enumerate}
\item[(a)]
$\pi: \dU(\dg) \to \End(V)$ is injective on $\J$;
\smallbreak \item[(b)]
$\dU_{d_1 - 1}(\dg) \subset \J$;
\smallbreak \item[(c)]
for $1 \le s \le r$ and $d_s \le k < d_{s+1}$ (take $d_{r+1}$ to be infinity),
\begin{equation*}
   \proj_k \bigl( \J \cap \dU_k(\dg) \bigr) = \S^{k - d_1 + 1}(J_{d_s}) \S^{d_1 - 1}(\dg);
\end{equation*}
\smallbreak \item[(d)]
for $1 \le s \le r$,
\begin{equation*}
   \proj_{d_s} \bigl( I \cap \dU_{d_s}(\dg) \bigr)
   + \S^{d_s - d_1 + 1}(J_{d_s}) \S^{d_1 - 1}(\dg)
   = \S^{d_s}(\dg);
\end{equation*}
\smallbreak \item[(e)]
for $1 < s \le r$, $I \cap \dU_{d_s - 1}(\dg) = I \cap \dU_{d_{s-1}}(\dg)$.
\end{enumerate}
Then
\begin{enumerate}
\item[(i)]
$\Ann_\dg(V)$ is generated in some subset of the degrees~$d_1, \ldots, d_r$.
\smallbreak \item[(ii)]
For $1 < s \le r$,
\begin{equation*}
   \Ann_\dg(V) \cap \dU_{d_s - 1}(\dg)
   = \bigl( \Ann_\dg(V) \cap \dU_{d_{s-1}}(\dg) \bigr) \dU_{d_s - d_{s-1} - 1}(\dg).
\end{equation*}
\smallbreak \item[(iii)]
$\Ann_\dg(V) = \la I \ra$.
\smallbreak \item[(iv)]
$\J$ is a cross-section: $\dU(\dg) = \Ann_\dg(V) \oplus \J$.
\end{enumerate}
\end{prop}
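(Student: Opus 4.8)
The plan is to imitate the proof of Proposition~\ref{machine}, but to argue degree by degree, letting the symbol space of the cross-section $\J$ shrink from $J_{d_{s-1}}$ to $J_{d_s}$ each time the degree crosses a threshold $d_s$, the shrinkage being paid for by the new generators appearing in degree $d_s$. Throughout I would write $\dU_k := \dU_k(\dg)$, $\I := \Ann_\dg(V)$, $\I_k := \I \cap \dU_k$, $\J_k := \J \cap \dU_k$, and $I^{(t)} := I \cap \dU_{d_t}$ (by condition~(e), $I$ has no elements of degree strictly between consecutive $d_t$'s, so this is the natural filtration of $I$); let $s(k) := \max\{t : d_t \le k\}$, with $s(k) = 0$ for $k < d_1$. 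As in Proposition~\ref{machine}, conditions~(a) and~(b) give $\I \cap \J = 0$ and $\I_{d_1 - 1} = 0$ immediately. Finally I would abbreviate $\dg^\ell := \S^\ell(\dg)$ and $\o I_{d_t} := \proj_{d_t}(I^{(t)}) \subseteq \dg^{d_t}$.

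The first step is to prove, by induction on $k \ge d_1$, the symbol identity
\begin{equation*}
   \dg^k = \sum_{t = 1}^{s(k)} \o I_{d_t}\,\dg^{k - d_t}
   + \S^{k - d_1 + 1}(J_{d_{s(k)}})\,\dg^{d_1 - 1}.
   \tag{$(E_k)$}
\end{equation*}
For $k = d_s$ this is immediate from condition~(d): the right-hand side contains $\o I_{d_s} + \S^{d_s - d_1 + 1}(J_{d_s})\,\dg^{d_1 - 1} = \dg^{d_s}$. For $d_s < k < d_{s+1}$, so that $s(k) = s(k-1) = s$, I would run exactly the rearrangement from the proof of Proposition~\ref{machine}: writing $J := J_{d_s}$ and $B := \sum_{t \le s} \o I_{d_t}\,\dg^{d_s - d_t} \subseteq \dg^{d_s}$, the first sum in $(E_k)$ equals $B\,\dg^{k - d_s}$, while $(E_{k-1})$ reads $\dg^{k-1} = B\,\dg^{k - 1 - d_s} + \S^{k - d_1}(J)\,\dg^{d_1 - 1}$. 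Pulling a factor of $J$ out of the right-hand side of $(E_k)$ and substituting $(E_{k-1})$ collapses it to $B\,\dg^{k - d_s} + J\,\dg^{k - 1} = (B + J\,\dg^{d_s - 1})\,\dg^{k - d_s}$, and $B + J\,\dg^{d_s - 1} = \dg^{d_s}$ because $B \supseteq \o I_{d_s}$ and $J\,\dg^{d_s - 1} \supseteq \S^{d_s - d_1 + 1}(J)\,\dg^{d_1 - 1}$, so condition~(d) again forces equality. (For $k < d_1$ the corresponding statement $\dU_k = \J_k$ is just condition~(b).)

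The second step lifts $(E_k)$ to the splitting $\dU_k = \bigl( \sum_{t \le s(k)} I^{(t)}\,\dU_{k - d_t} \bigr) \oplus \J_k$, again by induction on $k$: the sum is direct since $\sum_t I^{(t)}\,\dU_{k - d_t} \subseteq \I_k$ and $\I_k \cap \J_k = 0$, and for the reverse inclusion, given $u \in \dU_k$ one applies $\proj_k$, uses graded commutativity to identify $\o I_{d_t}\,\dg^{k - d_t} = \proj_k\bigl( I^{(t)}\,\dU_{k - d_t} \bigr)$, uses condition~(c) to identify $\S^{k - d_1 + 1}(J_{d_{s(k)}})\,\dg^{d_1 - 1} = \proj_k(\J_k)$, and then disposes of the resulting degree-$(k-1)$ error via the inductive hypothesis together with $\J_{k-1} \subseteq \J_k$ and $\dU_{k - 1 - d_t} \subseteq \dU_{k - d_t}$. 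Intersecting with $\I$ then yields $\I_k = \sum_{t \le s(k)} I^{(t)}\,\dU_{k - d_t}$ for every $k$. In particular $\I_k \subseteq \la I \ra$, so conclusion~(iii) follows (the reverse inclusion $\la I \ra \subseteq \I$ being trivial); conclusion~(iv) follows on taking the union over $k$; conclusion~(ii) follows from the elementary identity $\dU_{a + b} = \dU_a\,\dU_b$ applied to the formulas for $\I_{d_s - 1}$ and $\I_{d_{s-1}}$; and conclusion~(i) is read off by comparing the formula for $\I_k$ with the definition of an ideal generated in several degrees, condition~(e) being exactly what guarantees that the genuine generation degrees lie among $d_1, \ldots, d_r$.

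I expect the main obstacle to be stating $(E_k)$ correctly and verifying the absorption $B + J_{d_s}\,\dg^{d_s - 1} = \dg^{d_s}$. The delicate point is that in $(E_k)$ the exponent on $\dg$ in the cross-section term stays equal to $d_1 - 1$ while the argument of the symmetric power shrinks from $J_{d_1}$ down through $J_{d_{s(k)}}$, and it is precisely this asymmetry — together with the hypothesis $d_s - d_1 + 1 \ge 1$ — that lets condition~(d) feed the inductive step and that couples correctly to condition~(c). Once $(E_k)$ is formulated this way, the remaining computation is the same term-by-term manipulation already carried out in Proposition~\ref{machine}.
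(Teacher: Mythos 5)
Your proposal is correct and takes essentially the same route as the paper's proof: the same symbol-level induction on $k$, with condition (d) handling the threshold degrees $d_s$, condition (c) identifying $\proj_k(\J \cap \dU_k(\dg))$, and the same absorption step $\proj_{d_s}(I \cap \dU_{d_s}(\dg)) + J_{d_s}\S^{d_s-1}(\dg) = \S^{d_s}(\dg)$ driving the induction for $d_s < k < d_{s+1}$. The only difference is bookkeeping: you carry all the terms $\o I_{d_t}\,\S^{k-d_t}(\dg)$ and lift to the explicit decomposition $\dU_k(\dg) = \bigl(\sum_t (I \cap \dU_{d_t}(\dg))\,\dU_{k-d_t}(\dg)\bigr) \oplus \bigl(\J \cap \dU_k(\dg)\bigr)$, which makes conclusions (i) and (ii) immediate, whereas the paper tracks only the top term $\o I_{d_s}\,\S^{k-d_s}(\dg)$ and leaves (ii)--(iv) as brief consequences.
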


\begin{rem}
We index the subspaces $J_\bullet$ by $d_s$ rather than simply by~$s$, as it is convenient for their subscripts to indicate the degrees in which they take effect.  Also, note that in~(c), the exponents involve $d_1$, not $d_s$.
\end{rem}

\begin{proof}
We use the same abbreviations as in the proof of Theorem~\ref{machine}: $\dU_k := \dU_k(\dg)$, and for any subspaces $J \subseteq \dg$ and $\L \subseteq \dU$,
\begin{equation} \label{abbrevs}
   J^k := \S^k(J), \qquad
   \L_k := \L \cap \dU_k, \qquad
   \o\L_k := \proj_k(\L_k).
\end{equation}

The core of the argument consists in proving $\la I \ra \oplus \J = \dU$.  By~(a), $\la I \ra \cap \J = 0$, so by induction, it suffices to prove $\o{\la I \ra}_k + \o\J_k = \dg^k$ for all~$k$.  By~(b), this holds for $k < d_1$, and by (c) and~(d), it holds for $k = d_s$.  Proceeding by induction for $d_s < k < d_{s+1}$, we come down to proving that for such $k$,
\begin{equation*}
   \o I_{d_s} \dg^{k-d_s} + J_{d_s}^{k-d_1+1} \dg^{d_1-1} = \dg^k.
\end{equation*}
Expanding $\dg^{k-d_s}$ as $\dg^{k-d_s} + J_{d_s} \dg^{k-d_s-1}$, the left side becomes
\begin{equation*}
   \o I_{d_s} \dg^{k-d_s} + J_{d_s} (\o I_{d_s} \dg^{k-d_s-1} + J_{d_s}^{k-d_1} \dg^{d_1-1}).
\end{equation*}
By induction, the space in parentheses is $\dg^{k-1}$, so the expression is
\begin{equation*}
   \dg^{k-d_s} (\o I_{d_s} + J_{d_s} \dg^{d_s - 1}).
\end{equation*}
By~(d), this is $\dg^k$.

Statements~(ii)-(iv) now follow from $I \subset \Ann_\dg(V)$, and~(i) follows from~(e).
\end{proof}

\begin{rem}
This argument does not rule out the possibility that $\Ann_\dg(V)$ is generated in a proper subset of the degrees $d_1, \ldots, d_r$.  However, in our application of the result, we will have $r = 2$, $d_1 = 2$, and $d_2 = 3$, and direct arguments will prove that the ideal is generated in both degrees.
\end{rem}

\section{The structure of $\dU_3(\VR)$} \label{U3}

In this section and the next we will write $\dU$ for $\dU(\VR)$, and we maintain the abbreviations \eqref{abbrevs}.  Building on \eqref{U^2}, we have
\begin{equation} \label{U3U^3}
   \dU_3 = \bC \oplus \VR \oplus \dU^2 \oplus \dU^3,
   \qquad \dU^3 := \sym_3 \big( \S^3(\VR) \big).
\end{equation}

By Lemma~\ref{VRF}, $\dU^3$ is $\VR$-isomorphic to $\S^3 (\F_{-1})$.  Applying Lemma~\ref{Sym F}, we find that $\S^3 (\F_{-1})$ is $\db$-isomorphic to
\begin{equation} \label{S3b}
   \bigoplus_{n_2, n_3 \in \bN} \F_{2n_2 + 3n_3 - 3}
   = \F_{-3} \oplus \F_{-1} \oplus \F_0 \oplus \F_1
   \oplus \F_2 \oplus 2\F_3 \oplus \F_4 \oplus \cdots.
\end{equation}

There is an analog of Lemma~\ref{S2a} refining this decomposition.  Let us recall the following standard results in $\dsl_2$-theory:
\begin{itemize}
\item
For $\mu \in -\oh \bN$, there is up to isomorphism a unique indecomposable $\da$-module $\t\F_\mu$ that is a $\db$-split extension of $\F_\mu$ by $\F_{1-\mu}$.
\smallbreak \item
Any $\da$-module that is $\db$-isomorphic to $\F_\mu \oplus \F_{1-\mu}$ is $\da$-isomorphic to either $\t\F_\mu$ or the $\da$-direct sum $\F_\mu \oplus \F_{1-\mu}$.
\smallbreak \item
These two $\da$-modules are distinguished by their \textit{highest weight spaces}, the kernels of the actions of $e_1$: $\t\F_\mu$ has no highest weight vectors, while the split module has a highest weight line of weight $-\mu$.
\smallbreak \item
$Q$ acts non-semisimply on $\t\F_\mu$, with unique eigenvalue $q(\mu)$.
\end{itemize}

\begin{lemma} \label{S3a}
There is a unique $\da$-decomposition
\begin{equation*}
   \S^3 (\F_{-1}) \acong \F_{-3} \oplus \F_{-1} \oplus \t\F_0
   \oplus \F_2 \oplus 2\F_3 \oplus \F_4 \oplus \cdots.
\end{equation*}
It differs from the $\db$-decomposition \eqref{S3b} only in that $\F_0 \oplus \F_1$ is replaced by $\t\F_0$.
\end{lemma}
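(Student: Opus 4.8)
The plan is to mimic the proof of Lemma~\ref{S2a}, using the Casimir element $Q$ together with highest weight considerations to pin down the single ambiguous summand. By Lemma~\ref{Sym F} we already have the $\db$-decomposition~\eqref{S3b}, so the only issue is which $\da$-module structure is carried by the isotypic block spanning the weights appearing in $\F_0 \oplus \F_1$. Every other summand $\F_\mu$ with $\mu \in \{-3, -1, 2, 3, 4, \dots\}$ occurs either with $\mu \notin -\oh\bN$, where $\F_\mu$ is $\da$-irreducible, or (for $\mu = 3$, occurring with multiplicity~$2$) still outside the critical range $-\oh\bN$ so that the whole isotypic component is forced to be semisimple; either way a $\db$-submodule isomorphic to $\F_\mu$ that is $\da$-invariant is automatically $\da$-isomorphic to $\F_\mu$, exactly as in the proof of Lemma~\ref{S2a}. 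The point at which $\da$-semisimplicity can genuinely fail is precisely the pair $(\F_0, \F_1)$, since $1 = 1 - 0$ and $0 \in -\oh\bN$.

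First I would establish that the $Q$-eigenspace (generalized eigenspace) decomposition of $\S^3(\F_{-1})$ refines the $\db$-decomposition in all weights outside those shared by $\F_0$ and $\F_1$, using that the Casimir eigenvalues $q(\mu)$ for the summands are $q(-3)=12$, $q(-1)=2$, $q(0)=q(1)=0$, $q(2)=2$, $q(3)=6$, $q(4)=12$, \dots. These values are \emph{not} all distinct: $q(-1)=q(2)=2$ and $q(-3)=q(4)=12$, and so on, reflecting the symmetry $q(\mu)=q(1-\mu)$. So the crude argument of Lemma~\ref{S2a} (distinctness of Casimir values) does not apply verbatim; instead I would use the $\db$-grading by lowest weight jointly with the $Q$-action: within a fixed lowest weight $\mu \ge 2$, all summands have the same $q$-value only if they are genuinely the same $\F_\mu$, and $Q$ acts semisimply there because $\F_\mu$ is $\da$-irreducible for $\mu \notin -\oh\bN$ (here all lowest weights $\ge 2$ lie outside that set). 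Thus for weights $\ge 2$ the decomposition is forced to be a direct sum of copies of $\F_\mu$, $\da$-isomorphic to the $\db$-summands. The summands $\F_{-3}$ and $\F_{-1}$ are likewise $\da$-irreducible and uniquely placed.

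It remains to analyze the block $B$ which is $\db$-isomorphic to $\F_0 \oplus \F_1$. By the standard $\dsl_2$ facts recalled just before the lemma, $B$ is $\da$-isomorphic either to $\t\F_0$ or to $\F_0 \oplus \F_1$, and these are distinguished by the highest weight space: $\t\F_0$ has none, while the split module has a highest weight line of weight~$0$. So the crux is to show that $\S^3(\F_{-1})$ has \emph{no} highest weight vector of weight~$0$, equivalently that $\S^3(\VR)$ (or $\dU^3$, via $\sym_3$) has no weight-$0$ vector killed by $\ad(e_1)$, beyond those already accounted for in higher blocks — but since $0$ is the lowest weight occurring in $B$ and no summand other than those in $B$ and $\F_0 \oplus \F_1$'s neighbours contributes weight~$0$, this reduces to a direct computation in the weight-$0$ subspace of $\S^3(\VR)$. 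Concretely I would write down a basis of the weight-$0$ component of $\S^3(\F_{-1})$ — the monomials in $e_{-1}, e_0, e_1$ (and possibly $e_{-1}^a e_b e_{-b}$-type terms) of total weight~$0$ — compute the matrix of $\ad(e_1)$ on it, and verify its kernel has the dimension predicted by $\t\F_0$ rather than by the split module (that is, the kernel is spanned exactly by the weight-$0$ highest weight vectors coming from the $\F_\mu$ with $\mu$ large, with no extra dimension). The main obstacle is precisely this linear-algebra check: organizing the weight-$0$ basis of $\S^3(\VR)$ and computing $\ker \ad(e_1)$ on it cleanly. Once the absence of an extra weight-$0$ highest weight vector is confirmed, $B \acong \t\F_0$, uniqueness of the decomposition follows as in Lemma~\ref{S2a} (each isotypic piece being rigid), and the stated refinement of~\eqref{S3b} is proved.
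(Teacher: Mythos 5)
There is a genuine gap in how you dispose of the blocks other than $(\F_0,\F_1)$. You assert that ``$\F_{-3}$ and $\F_{-1}$ are likewise $\da$-irreducible and uniquely placed,'' but $-1,-3\in-\oh\bN$, so these modules are \emph{not} $\da$-irreducible: each has a finite-dimensional $\da$-submodule, and by the standard facts quoted before the lemma each can occur glued to its partner of the same Casimir value in a nonsplit extension. Concretely, since $q(-1)=q(2)$ and $q(-3)=q(4)$, the $Q$-generalized eigenspaces of $\S^3(\F_{-1})$ that are $\db$-isomorphic to $\F_{-1}\oplus\F_2$ and $\F_{-3}\oplus\F_4$ could a priori be the indecomposables $\t\F_{-1}$ and $\t\F_{-3}$; your argument (``lowest weight vectors of weight $\ge 2$ generate copies of $\F_\mu$'') shows these eigenspaces contain $\da$-copies of $\F_2$ and $\F_4$ but does not show the extensions split, which is exactly what the statement of the lemma claims. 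Your proposed computation only checks that there is no highest weight vector of weight $0$, which settles the $(\F_0,\F_1)$ block (and that check is indeed feasible: the weight-$0$ component of $\S^3(\VR)$ is spanned by $e_{-1}^2e_2$, $e_{-1}e_0e_1$, $e_0^3$, and $\ad(e_1)$ has trivial kernel there), but it says nothing about weights $1$ and $3$.

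The paper closes this hole by computing the \emph{entire} highest weight space of $\S^3(\F_{-1})$ at once: the highest weight space of $\S(\F_\mu)$ is nonzero only for $\mu\in-\oh\bN$, in which case it is the highest weight space of the symmetric algebra of the finite-dimensional $\da$-submodule of $\F_\mu$; for $\F_{-1}$ that submodule is the $3$-dimensional adjoint copy, and its symmetric cube yields exactly two highest weight lines, of weights $3$ and $1$. The presence of these two lines forces the $(\F_{-3},\F_4)$ and $(\F_{-1},\F_2)$ eigenspaces to be $\da$-split, while the absence of a weight-$0$ line forces the third block to be $\t\F_0$. To repair your argument you would need to supply the splitting of the first two blocks as well, e.g.\ by exhibiting highest weight vectors of weights $3$ and $1$ (such as $e_1^3$ and the weight-$1$ highest weight vector in $\S^3(\da)$) or by adopting the paper's symmetric-algebra count.
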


\begin{proof}
First use \eqref{S3b} to deduce that the $Q$-generalized eigenspaces of $\S^3(\F_{-1})$ are $\db$-isomorphic to
\begin{equation} \label{S3 Q}
   (\F_{-3} \oplus F_4), \qquad (\F_{-1} \oplus F_2), \qquad (\F_0 \oplus F_1), \qquad 2\F_3,
\end{equation}
and multiples of $\F_m$ for $m \in 5 + \bN$.

It is not difficult to prove that the highest weight space of $\S(\F_\mu)$ is non-zero \iff\ $\mu \in -\oh \bN$, in which case it is the highest weight space of the symmetric algebra of the unique finite-dimensional $\da$-submodule of $\F_\mu$, the span of $\{ dx^\mu x^n: 0 \le n \le -2\mu \}$.  It follows that $\S^3(\F_{-1})$ has exactly two highest weight lines, one of weight $3$ and one of weight~$1$.  Therefore the first two generalized eigenspaces in \eqref{S3 Q} are $\da$-split, but the third is not, as there is no highest weight line of weight~$0$.

All the other generalized eigenspaces are $\da$-isomorphic to multiples of $\F_m$, because any lowest weight vector of weight $\mu \not\in -\oh \bN$ generates an $\da$-copy of $\F_\mu$.
\end{proof}

Lemma~\ref{S3a} gives the $\dU^3$-analog of Proposition~\ref{U2a}:

\begin{prop} \label{U3a}
There is a unique $\da$-decomposition
\begin{equation*}
   \dU^3 = K_{-3} \oplus K_{-1} \oplus \t K_0 \oplus K_2 \oplus K_3 \oplus K_4 \oplus \cdots,
\end{equation*}
where the summands are $\da$-submodules of $\dU^3$ such that
\begin{equation*}
   K_{-3} \acong \F_{-3}, \qquad K_{-1} \acong \F_{-1}, \qquad
   \t K_0 \acong \t\F_0, \qquad K_2 \acong \F_2,
\end{equation*}
and for $\mu \in 3 + \bN$, $K_\mu$ is $\da$-isomorphic to a multiple of $\F_\mu$.  Moreover, $\t K_0$ contains a unique $\da$-submodule $K_1$ that is $\da$-isomorphic to $\F_1$.
\end{prop}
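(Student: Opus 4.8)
The plan is to obtain Proposition~\ref{U3a} from Lemma~\ref{S3a} by transport of structure along a $\da$-isomorphism $\S^3(\F_{-1}) \to \dU^3$, exactly as Proposition~\ref{U2a} is obtained from Lemma~\ref{S2a}. First I would produce such an isomorphism $\phi\colon \S^3(\F_{-1}) \to \dU^3$: applying the functor $\S^3$ to the $\VR$-isomorphism $\VR \to \F_{-1}$ of Lemma~\ref{VRF} gives a $\VR$-isomorphism $\S^3(\VR) \to \S^3(\F_{-1})$, while by \eqref{U3U^3} and its defining properties $\sym_3$ is a $\VR$-isomorphism from $\S^3(\VR)$ onto $\dU^3$; composing the inverse of the first with the second and restricting to $\da$ gives $\phi$. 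I would then define $K_{-3}$, $K_{-1}$, $\t K_0$, $K_2$, $K_3$, $K_4, \ldots$ to be the $\phi$-images of the corresponding summands in Lemma~\ref{S3a}, and $K_1$ to be the $\phi$-image of the submodule $\F_1 \subseteq \t\F_0$. Since $\phi$ is a $\da$-isomorphism onto $\dU^3$, this yields at once the decomposition claimed in the proposition, with the claimed isomorphism types, and with $K_1$ a $\da$-submodule of $\t K_0$ isomorphic to $\F_1$.

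Uniqueness of the decomposition transports in the same way: a $\da$-decomposition of $\dU^3$ of the stated form pulls back under $\phi^{-1}$ to a $\da$-decomposition of $\S^3(\F_{-1})$ of the form in Lemma~\ref{S3a}. Equivalently, one can recover the summands directly, as in the proofs of Proposition~\ref{U2a} and Lemma~\ref{S3a}: the generalized eigenspaces of the Casimir $Q$ on $\dU^3$ are canonical $\da$-submodules; the one for the eigenvalue $q(0) = 0$ is $\t K_0$, which is $\da$-indecomposable rather than split because $\dU^3$ has no highest weight line of weight~$0$ (again transported from Lemma~\ref{S3a}); and the remaining $K_\mu$ are split off from their $Q$-eigenspaces using that $q$ separates the relevant weights, or, where two of those weights share a $Q$-value, that the corresponding constituents are non-isomorphic --- the same bookkeeping already carried out for Lemma~\ref{S3a}.

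The remaining point is that $K_1$ is the \emph{unique} $\da$-submodule of $\t K_0$ isomorphic to $\F_1$, equivalently that $\F_1$ is the unique such submodule of $\t\F_0$. If $M \subseteq \t\F_0$ is $\da$-isomorphic to $\F_1$, then, $\F_1$ being irreducible, $M \cap \F_1$ is either $M$, forcing $M = \F_1$, or $0$, in which case $M$ embeds into the quotient $\t\F_0/\F_1 \cong \F_0$; but the $\da$-submodules of $\F_0$ are only $0$, $\bC$, and $\F_0$, none of which is isomorphic to $\F_1$. Hence $\F_1$ is the unique irreducible $\da$-submodule of $\t\F_0$, and $K_1 = \phi(\F_1)$ is the unique $\da$-submodule of $\t K_0$ isomorphic to $\F_1$.

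Essentially all of the content sits in Lemma~\ref{S3a} and in the $\dsl_2$-module facts recalled just before it; granting those, Proposition~\ref{U3a} is a formality, with no real obstacle. The only things worth double-checking are that $\phi$ is genuinely an isomorphism onto $\dU^3$ (so the named spaces lie in $\dU^3 \subseteq \dU_3(\VR)$, as recorded in \eqref{U3U^3}), and that the $q$-value coincidences $q(-3) = q(4)$, $q(-1) = q(2)$, $q(0) = q(1)$ are handled by the highest-weight-space argument exactly as in the proof of Lemma~\ref{S3a}.
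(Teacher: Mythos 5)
Your proposal is correct and is essentially the paper's own route: the paper offers no separate proof of Proposition~\ref{U3a}, presenting it as the immediate transport of Lemma~\ref{S3a} to $\dU^3$ via Lemma~\ref{VRF} and the symmetrizer $\sym_3$, exactly as you do. Your extra paragraph pinning down the uniqueness of $K_1$ (using that in $\t\F_0$ the copy of $\F_1$ is the \emph{submodule}, with quotient $\F_0$, since a sub copy of $\F_0$ would force a highest weight vector of weight~$0$) is a correct filling-in of a detail the paper leaves implicit.
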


Combining \eqref{U3U^3} with Propositions~\ref{U2a} and~\ref{U3a} gives the $\da$-structure of $\dU_3$:

\begin{prop} \label{U_3a}
$\dU_3$ has the following $\da$-decomposition:
\begin{align*}
   \dU_3 & = \bC \oplus \VR \oplus (G_{-2} \oplus G_0 \oplus G_2 \oplus \cdots)
   \oplus (K_{-3} \oplus K_{-1} \oplus \t K_0 \oplus \cdots) \\[4pt]
   & \acong \F_{-3} \oplus \F_{-2} \oplus 2\F_{-1} \oplus \bC \oplus \F_0
   \oplus \t\F_0 \oplus 2\F_2 \oplus 2\F_3 \oplus 2\F_4 \oplus 3\F_5 \oplus \cdots.
\end{align*}
\end{prop}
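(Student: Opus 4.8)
The plan is to substitute, into the direct-sum decomposition \eqref{U3U^3}, namely $\dU_3 = \bC \oplus \VR \oplus \dU^2 \oplus \dU^3$ in which all four summands are $\da$-submodules, the $\da$-module descriptions of the pieces that have already been proved, and then to regroup the resulting $\da$-summands by weight. Inserting the $\da$-decomposition $\dU^2 = G_{-2} \oplus G_0 \oplus G_2 \oplus \cdots$ of Proposition~\ref{U2a} and the $\da$-decomposition $\dU^3 = K_{-3} \oplus K_{-1} \oplus \t K_0 \oplus K_2 \oplus K_3 \oplus \cdots$ of Proposition~\ref{U3a} produces the first displayed line.

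For the second line I would replace each summand by its isomorphism type. The summand $\bC$ is the trivial module and is left untouched. By Lemma~\ref{VRF}, $\VR \acong \F_{-1}$. By Proposition~\ref{U2a} (with $G_{2\ell} \acong \F_{2\ell}$), $\dU^2 \acong \F_{-2} \oplus \F_0 \oplus \F_2 \oplus \F_4 \oplus \cdots$, with only even weights, each occurring once. By Proposition~\ref{U3a}, together with \eqref{S3b} and Lemma~\ref{S3a} — and Lemma~\ref{Sym F}, which identifies the multiplicity of $\F_\mu$ in $\dU^3$ with the number of partitions of $\mu + 3$ into parts of size $2$ and $3$ — one has $\dU^3 \acong \F_{-3} \oplus \F_{-1} \oplus \t\F_0 \oplus \F_2 \oplus 2\F_3 \oplus \F_4 \oplus \cdots$, in which the pair $\F_0 \oplus \F_1$ of the naive $\db$-decomposition is fused into the single indecomposable $\t\F_0$ coming from $\t K_0$.

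Finally I would add the four contributions and collect by isomorphism type. The trivial module $\bC$ remains a summand on its own; the copy of $\F_0$ coming from $G_0 \subset \dU^2$ and the summand $\t\F_0$ coming from $\t K_0 \subset \dU^3$ both appear, so the low-weight part reads $\cdots \oplus \bC \oplus \F_0 \oplus \t\F_0 \oplus \cdots$; and for every other weight $\mu$ the multiplicity of $\F_\mu$ in $\dU_3$ is just the sum of its multiplicities in the four pieces — the single copy from $\VR$ at weight $-1$, one from each relevant $G_{2\ell}$ at an even weight, and the partition-count many from $\dU^3$ for $\mu \ge 2$. Reading off the totals gives the second displayed line.

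There is no essential obstacle: once Propositions~\ref{U2a} and~\ref{U3a} are available the argument is pure bookkeeping. The one place that needs care is keeping track of the non-semisimple block $\t\F_0$, which lives entirely inside $\dU^3$ and must be carried through as $\t\F_0$ rather than split into $\F_0 \oplus \F_1$; and if one wants this $\da$-decomposition to be the unique one of its shape, that follows from the uniqueness already established in Lemma~\ref{S2a} (hence Proposition~\ref{U2a}) and Lemma~\ref{S3a} (hence Proposition~\ref{U3a}).
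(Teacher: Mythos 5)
Your proposal is correct and is essentially the paper's own argument: Proposition~\ref{U_3a} is given there with no proof beyond the phrase ``combining \eqref{U3U^3} with Propositions~\ref{U2a} and~\ref{U3a}'', i.e.\ exactly your substitute-and-regroup bookkeeping via Lemma~\ref{VRF} and the partition count of Lemma~\ref{Sym F}, with $\t\F_0$ carried along unsplit. One remark: pushing your count one weight further gives multiplicity $q_3(8)=2$ for $\F_5$ (the summand $\dU^2$ contributes nothing in odd weights) and $1+q_3(9)=3$ for $\F_6$, so the tail of the displayed line should read $2\F_4 \oplus 2\F_5 \oplus 3\F_6 \oplus \cdots$; the ``$3\F_5$'' in the statement is a typo in the paper, not a gap in your argument.
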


We will need the lowest weight elements of $\dU^3$.  Recall the element $Z$ of $\dU^2$ from Corollary~\ref{ZQS}, and define
\begin{equation*}
   Y := Q (e_0 - \oh) - Z e_{-1}, \qquad y(\lambda) := (\lambda - \oh) q(\lambda).
\end{equation*}

\begin{lemma} \label{U3stuff}
\begin{enumerate}
\item[(i)]
$\pi_\lambda(Y) = y(\lambda)$ for all $\lambda$.
\smallbreak \item[(ii)]
The lowest weight lines of $\dU^3$ of weights~$-3$, $-1$, $0$, $1$ are spanned by
\begin{equation*}
   e_{-1}^3, \qquad (Q - {\ts\frac{1}{3}}) e_{-1}, \qquad Y, \qquad Q^{e_2} e_{-1}.
\end{equation*}
\smallbreak \item[(iii)]
$K_{-1} = (Q - \frac{1}{3}) \VR$.
\smallbreak \item[(iv)]
$\ad(e_1) Y = \oh Q^{e_2} e_{-1}$.
\end{enumerate}
\end{lemma}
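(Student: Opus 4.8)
The plan is to prove the four parts of Lemma~\ref{U3stuff} in the order (i), (ii), (iv), (iii), since (i) supplies the key scalar identity, (ii) identifies the lowest weight vectors, and then (iv) is a direct computation that feeds into (iii).

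For part~(i), I would compute $\pi_\lambda(Y)$ directly from the definition $Y := Q(e_0 - \oh) - Z e_{-1}$, using the already-established formulas $\pi_\lambda(Q) = q(\lambda)$ from \eqref{Qq} and $\pi_\lambda(Z) = q(\lambda)x$ from Lemma~\ref{ZQe2}. Since $\pi_\lambda(e_0) = x\px + \lambda$ and $\pi_\lambda(e_{-1}) = \px$, one gets $\pi_\lambda(Y) = q(\lambda)(x\px + \lambda - \oh) - q(\lambda)x\px = q(\lambda)(\lambda - \oh) = y(\lambda)$. This is the routine core. (Strictly, one should check $\pi_\lambda(Q)$ commutes with $\pi_\lambda(e_0-\oh)$, which holds since $Q$ is central in $\dU(\da)$.)

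For part~(ii), I would first locate the relevant lowest weight lines using Proposition~\ref{U3a}: in $\dU^3$ the weight $-3$ piece is $K_{-3}\acong\F_{-3}$, weight $-1$ is $K_{-1}\acong\F_{-1}$, and weights $0$ and $1$ come from $\t K_0\acong\t\F_0$ (whose lowest weight line has weight $0$ and which contains the submodule $K_1\acong\F_1$, with lowest weight line of weight $1$). Each of these lowest weight spaces is one-dimensional, so it suffices to exhibit a nonzero element of $\dU^3$ of the correct weight that is killed by $\ad(e_{-1})$. The element $e_{-1}^3$ is obviously such for weight $-3$. For weight $-1$: $(Q - \frac13)e_{-1}$ has weight $-1$; one checks $\ad(e_{-1})$ annihilates it because $\ad(e_{-1})Q$ has weight $-1$ and $\dU_2(\VR)$ has no $(-1)$-lowest weight elements (cf. the remark after Corollary~\ref{ZQS}), so $\ad(e_{-1})Q$ is a multiple of $e_{-1}^2$ — in fact one computes $\ad(e_{-1})Q = 0$ since $Q\in\dU^2$ and $(\dU^2)^{e_{-1}}$ in weight $-1$ is zero... wait, more carefully: $Q$ has weight $0$, $\ad(e_{-1})Q$ has weight $-1$, and lies in $\dU^2$; by Proposition~\ref{H2ell}(iii) the weight $-1$ part of $\dU^2$ is spanned by $\ad(e_{-1})e_{-1}^2 = 0$, hence $\ad(e_{-1})Q = 0$, so $\ad(e_{-1})\big((Q-\frac13)e_{-1}\big) = (Q-\frac13)\ad(e_{-1})e_{-1} = 0$. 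The constant shift by $\frac13$ is pinned down by requiring the element to lie in the canonical $\da$-summand $K_{-1}$ rather than having a component in $\VR\oplus\bC$; this is verified by checking $\pi_\lambda\big((Q-\frac13)e_{-1}\big)$ or by the symbol argument — I expect this normalization to be the fiddly point. For weight $0$: $Y$ has weight $0$; that $\ad(e_{-1})Y = 0$ follows from the relation $\ad(e_{-1})\big(Q(e_0-\oh)\big) = Q\ad(e_{-1})(e_0-\oh) = -Q e_{-1}$ and $\ad(e_{-1})(Ze_{-1}) = (\ad(e_{-1})Z)e_{-1} = Qe_{-1}$ by Corollary~\ref{ZQS}(i), which cancel. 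For weight $1$: $Q^{e_2}e_{-1}$ has weight $1$, and $\ad(e_{-1})(Q^{e_2}e_{-1}) = (\ad(e_{-1})Q^{e_2})e_{-1}$; since $Q^{e_2} = \ad(e_2)Q$ and $[\ad(e_{-1}),\ad(e_2)] = -3\ad(e_1)$, we get $\ad(e_{-1})Q^{e_2} = -3\ad(e_1)Q + \ad(e_2)\ad(e_{-1})Q = -3\ad(e_1)Q$, and $\ad(e_1)Q = 0$ because $Q$ is central in $\dU(\da)$ and $e_1\in\da$. Hence $\ad(e_{-1})(Q^{e_2}e_{-1}) = 0$.

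For part~(iv), I would compute $\ad(e_1)Y$ directly: $\ad(e_1)Y = \ad(e_1)\big(Q(e_0-\oh)\big) - \ad(e_1)(Ze_{-1})$. Since $Q$ is central in $\dU(\da)$, $\ad(e_1)Q = 0$, so the first term is $Q\ad(e_1)(e_0-\oh) = Q(-e_1) = -Qe_1$. For the second, $\ad(e_1)(Ze_{-1}) = (\ad(e_1)Z)e_{-1} + Z\ad(e_1)e_{-1} = (\ad(e_1)Z)e_{-1} + Z(-2e_0)$. I need $\ad(e_1)Z$, which I can extract from the explicit formula $Z = \oh(e_1e_0 - e_2e_{-1} - e_1)$ in Corollary~\ref{ZQS}(i), or better, note $\ad(e_1)Z$ has weight $1$, lies in $\dU^2$, and relates to $Q^{e_2}$; combining these pieces and simplifying should yield $\ad(e_1)Y = \oh Q^{e_2}e_{-1}$. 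An efficient alternative: both sides are weight-$1$ lowest weight vectors in $\dU^3$ (apply $\ad(e_{-1})$ and use central-ness arguments as above), so by the one-dimensionality of that lowest weight line, $\ad(e_1)Y = c\, Q^{e_2}e_{-1}$ for a scalar $c$; then evaluate under $\pi_\lambda$: the left side is $\ad(e_1)y(\lambda)$-related... actually $\pi_\lambda(\ad(e_1)Y)$ requires care since $\ad(e_1)$ is not a module map on scalars — instead one computes $\pi_\lambda(\ad(e_1)Y) = [\pi_\lambda(e_1), \pi_\lambda(Y)] = [\pi_\lambda(e_1), y(\lambda)] = 0$?? No — $y(\lambda)$ is a scalar so this is $0$, but $Q^{e_2}e_{-1}$ also acts by... $\pi_\lambda(Q^{e_2}) = 0$ by Lemma~\ref{F action} (it lies in $H_2$), so $\pi_\lambda(Q^{e_2}e_{-1}) = 0$ too, giving $0 = 0$ with no information on $c$. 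So the scalar $c$ must be fixed by a genuine computation in $\dU^3$, tracking one convenient monomial coefficient (say the coefficient of $e_3e_{-1}^2$ or $e_2e_1e_{-1}$) on both sides.

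The main obstacle will be the bookkeeping in part~(ii) to justify that the stated elements lie in the \emph{canonical} $\da$-summands of Proposition~\ref{U3a} — i.e.\ that they have no lower-degree components in $\VR\oplus\bC$ — which is what pins down the normalizing constants $-\frac13$ in $(Q-\frac13)e_{-1}$ and the precise form of $Y$; and in part~(iv), extracting the scalar $\oh$, since the representation-theoretic shortcuts all collapse to $0=0$ and force an explicit monomial computation in $\dU_3(\VR)$.
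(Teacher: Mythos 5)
The core gap is in part~(ii). The statement concerns the lowest weight lines of $\dU^3 = \sym_3\big(\S^3(\VR)\big)$, not of $\dU_3(\VR)$, and your verifications only establish the easy half: that the four elements are lowest weight elements of $\dU_3(\VR)$ of the right weights. What remains is to show they have no components in $\bC \oplus \VR \oplus \dU^2$ under the decomposition \eqref{U3U^3} — and that is precisely where the normalizations live, since the weight~$-1$ lowest weight space of $\dU_3(\VR)$ is two-dimensional (spanned by $e_{-1}$ and $Q e_{-1}$) and the weight~$0$ one is three-dimensional (spanned by $1$, $Q$, $Y$); one-dimensionality holds only inside $\dU^3$, so it cannot be invoked before membership in $\dU^3$ is proven. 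You flag this as ``the fiddly point,'' but the two tools you propose cannot resolve it: the degree-$3$ symbol $\proj_3$ is blind to lower-degree corrections, and $\pi_\lambda$-evaluation does not see the symmetrization decomposition at all (every $(Q-c)e_{-1}$ is a lowest weight element acting by $(q(\lambda)-c)\px$, so no value of $c$ is singled out). The paper settles these points with two specific devices you would need, or substitutes for: for $Y$, the transpose anti-involution of Lemma~\ref{transpose parity} — one checks $Y^T=-Y$, which kills the components in $\bC$ and $\dU^2$, and $\VR$ has no weight-$0$ lowest weight vector, so $Y \in \dU^3$; for $(Q-\frac13)e_{-1}$, an explicit application of $\sym_3$ to the lowest weight element $e_{-1}e_0^2 - e_{-1}^2 e_1$ of $\S^3(\da)$, which is what produces the constant $\frac13$. (Membership of $Q^{e_2}e_{-1}$ is then free, either from~(iv) as in the paper, or because $(\dU_3)_1^{e_{-1}}$ is one-dimensional and already lies in $\dU^3$ by Proposition~\ref{U_3a}; you did not note this either, though it is a small patch.)

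Part~(iii) is a second, independent gap: you announce the order (i), (ii), (iv), (iii) and say (iv) ``feeds into'' (iii), but no argument for $K_{-1} = (Q-\frac13)\VR$ ever appears. In the paper this is extracted from the same $\sym_3$ computation together with the $\da$-structure of $\dU_3(\VR)$; note it does not follow formally from knowing the lowest weight vector alone, since $\F_{-1}$ is not generated over $\da$ by its lowest weight vector. By contrast, parts (i) and (iv) of your plan are essentially the paper's proof: (i) is the same computation from \eqref{Qq} and Lemma~\ref{ZQe2}, and for (iv) your fallback — proportionality to $Q^{e_2}e_{-1}$ from the one-dimensionality of $(\dU_3)_1^{e_{-1}}$, with the scalar $\oh$ fixed by tracking one monomial (the paper tracks the $e_3$-terms) — is exactly what the paper does, and your observation that the $\pi_\lambda$-shortcut collapses to $0=0$ is correct. (Minor: $[\ad(e_{-1}),\ad(e_2)] = 3\ad(e_1)$, not $-3\ad(e_1)$; harmless here since $\ad(e_1)Q=0$.)
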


\begin{proof}
For~(i), apply Lemma~\ref{ZQe2}.  In~(ii), the quantities are lowest weight elements of $\dU_3$ of the correct weights: this is clear for all but $Y$, and Corollary~\ref{ZQS}(i) can be used to prove it for $Y$.  We must prove that they are actually in $\dU^3$.  This is clear for $e_{-1}^3$.  For $Y$, recall from Lemma~\ref{transpose parity} that transposition preserves lowest weight elements and acts by $(-1)^k$ on $\dU^k$.  In particular, $Q^T = Q$ and $Z^T = Z$.  Use this to deduce $Y^T = -Y$.  Since $\dU^1$ has no lowest weight elements of weight~$0$, $Y$ must be in $\dU^3$.  Hence $Q^{e_2} e_{-1}$ is in $\dU^3$ by~(iv).  To prove $(Q - \frac{1}{3}) e_{-1} \in \dU^3$, apply $\sym_3$ to the lowest weight element $e_{-1} e_0^2 - e_{-1}^2 e_1$ of $\S^3(\da)$.  This also implies~(iii).

Computation gives~(iv), but let us give a conceptual proof.  By Proposition~\ref{U3a}, $Y$ is the $0$-lowest weight element of an $\da$-copy of $\t\F_0$.  Therefore $\ad(e_1) Y$ is the $1$-lowest weight element of said $\t\F_0$.  By Proposition~\ref{U_3a}, $(\dU_3)_1^{e_{-1}}$ is $1$-dimensional, so it is spanned by $Q^{e_2} e_{-1}$.  This proves the result up to a scalar.  To compute the scalar, keep track only of terms involving $e_3$.
\end{proof}

\begin{rem}
$\t K_0$ is $\da$-generated by any $\ad(e_{-1})$-preimage of $Y$.  An explicit basis of the entire $0$-generalized eigenspace of $\dU_3$ may be found in Lemma~2.6 of \cite{CM07}.
\end{rem}

\begin{cor} \label{U3 LWVs}
The following elements of $\dU_3$ form a basis of the sum of its lowest weight spaces of weight~$\le 1$:
\begin{equation*}
   e_{-1}^3,\quad e_{-1}^2,\quad e_{-1},\quad Q e_{-1},\quad
   1,\quad Q,\quad Y,\quad Q^{e_2} e_{-1}.
\end{equation*}
They act on the universal \tdm\ $\F_\Lambda$ by, respectively, the operators
\begin{equation*}
   \px^3,\quad \px^2,\quad \px,\quad q(\Lambda) \px,\quad
   1,\quad q(\Lambda),\quad y(\Lambda),\quad 0.
\end{equation*}
\end{cor}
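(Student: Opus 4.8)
The plan is to establish the corollary in three stages. First, one verifies that the eight listed elements are lowest weight vectors of $\dU_3$ of respective weights $-3,-2,-1,-1,0,0,0,1$: the elements $e_{-1}^3$, $e_{-1}^2$, $e_{-1}$ are visibly annihilated by $\ad(e_{-1})$; the Casimir $Q$ is central in $\dU(\da)$, so $Q$ and $Q e_{-1}$ are annihilated by $\ad(e_{-1})$; the element $1$ is trivially a lowest weight vector of weight~$0$; and $Y$, $Q^{e_2}e_{-1}$ are lowest weight elements of $\dU^3$ of weights $0$ and~$1$ by Lemma~\ref{U3stuff}(ii) (alternatively, $\ad(e_{-1})Y=0$ follows directly from $\ad(e_{-1})Z=Q$, Corollary~\ref{ZQS}(i)).

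Second, one shows the eight elements are linearly independent and span the sum of the lowest weight spaces of $\dU_3$ of weight $\le1$. Independence is a question within each weight: in weight~$-1$ one has $e_{-1}\in\dU_1$ but $Q e_{-1}\notin\dU_2$; in weight~$0$ one has $1\in\dU_0$, $\proj_2(Q)\ne0$, and $\proj_3(Y)\ne0$ (Lemma~\ref{U3stuff}(ii) places $Y$ in $\dU^3$); the remaining weights carry at most one listed element. For spanning, one reads off Proposition~\ref{U_3a}: the summands $\F_{-3},\F_{-2},\F_{-1},\F_{-1},\bC,\F_0$ contribute one lowest weight vector apiece, of weights $-3,-2,-1,-1,0,0$; the summand $\t\F_0$ contributes one lowest weight vector of weight~$0$ and one of weight~$1$ (the latter spanning its submodule $K_1\cong\F_1$); and no summand $\F_\mu$ with $\mu\ge2$ has a lowest weight vector of weight $\le1$. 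Hence these spaces have dimensions $1,1,2,3,1$ in weights $-3,-2,-1,0,1$, totalling $8$, so the eight independent vectors form a basis.

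Third, one computes the actions on $\F_\Lambda$. From its definition $\pi_\Lambda(e_{-1})=\px$, so $e_{-1}^3,e_{-1}^2,e_{-1}$ act by $\px^3,\px^2,\px$ and $1$ by the identity. A short computation in $\bC[\Lambda,x,\px]$ yields $\pi_\Lambda(Q)=\Lambda^2-\Lambda=q(\Lambda)$ — or, more conceptually, $Q$ is central and restricts to the scalar $q(\lambda)$ on each $\F_\lambda$ by~\eqref{Qq}, hence acts on the universal module by the polynomial $q(\Lambda)$ interpolating those scalars — whence $Q$ acts by $q(\Lambda)$ and $Q e_{-1}$ by $q(\Lambda)\px$. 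The same interpolation principle applied to Lemma~\ref{U3stuff}(i) gives $\pi_\Lambda(Y)=y(\Lambda)$, and from $Q^{e_2}\in H_2$ together with Lemma~\ref{ZQe2} we get $\pi_\Lambda(Q^{e_2})=0$, so $Q^{e_2}e_{-1}$ acts by~$0$.

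The step I expect to be most delicate is the dimension bookkeeping in the second stage: one must be careful that $\t\F_0$, although assembled from $\F_0$ and $\F_1$, really does carry a lowest weight vector in each of weights~$0$ and~$1$, and that the lowest weight vectors of every other summand lie outside the weight range $\{-3,-2,-1,0,1\}$ except for those explicitly counted. The remaining verifications — the Weyl-algebra identity for $\pi_\Lambda(Q)$ and the direct checks that the listed elements are lowest weight vectors — are routine.
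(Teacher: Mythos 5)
Your proof is correct and follows essentially the same route the paper intends for this corollary: the count and identification of lowest weight vectors comes from Proposition~\ref{U_3a} together with Lemma~\ref{U3stuff}(ii) (with $Qe_{-1}$ replacing $(Q-\frac13)e_{-1}$ up to the lower-order term $\frac13 e_{-1}$), and the actions come from \eqref{Qq}, Lemma~\ref{U3stuff}(i), and Lemma~\ref{ZQe2}, extended to $\F_\Lambda$ exactly as you do. The dimension bookkeeping you flag as delicate is handled correctly: $\t\F_0$ is $\db$-split, so it contributes lowest weight lines in weights $0$ and $1$, giving the total of $8$.
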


We now proceed toward partial analogs of Proposition~\ref{H2ell} and Lemma~\ref{ZQe2}.  Consider the following subspaces of $\dU^3$:
\begin{equation*}
   L_1 := K_1 \oplus K_2 \oplus K_3 \oplus \cdots, \qquad 
   L_0 := \t K_0 + L_1, \qquad
   L_{-1} := K_{-1} \oplus L_1.
\end{equation*}

\begin{prop} \label{L}
\begin{enumerate}
\item[(i)]
$L_1 = \dU^3 \cap \la Q^{e_2} \ra$.
\smallbreak \item[(ii)]
$L_1$, $L_0$, and $L_{-1}$ are $\VR$-submodules of $\dU^3$.
\smallbreak \item[(iii)]
As $\VR$-modules, $L_0 / L_1 \cong \F_0$ and $L_{-1} / L_1 \cong \F_{-1}$.
\end{enumerate}
\end{prop}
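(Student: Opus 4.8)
The plan is to prove the three assertions in order, using the $\da$-decomposition of $\dU^3$ from Proposition~\ref{U3a} together with \PBW\ (with $\da$ on the right) to control weights, exactly as in the proof of Proposition~\ref{H2ell}. For~(i), I would first observe that $Q^{e_2}$ spans the $2$-lowest weight line of $\dU^2 = \oplus_\ell G_{2\ell}$, so $\la Q^{e_2} \ra$ is generated by a weight-$2$ lowest weight element. Intersecting with $\dU^3$: by \PBW\ the $\dU$-module generated by $Q^{e_2}$ has a filtration whose successive quotients have minimal weight $\ge 2$, so $\dU^3 \cap \la Q^{e_2}\ra$ has minimal weight~$\ge 1$ only through the degree-$3$ contribution $Q^{e_2} e_{-1}$, which by Lemma~\ref{U3stuff}(iv) lies in $\t K_0$ but (being a nonzero lowest weight element of weight~$1$) generates the $\da$-copy $K_1$. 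The containment $K_1 \oplus K_2 \oplus \cdots \subseteq \dU^3 \cap \la Q^{e_2}\ra$ then follows because $\dU^3 \cap \la Q^{e_2}\ra$ is $\ad$-invariant and, by the uniserial-type structure of $\dU^3$ from Proposition~\ref{U3a} (the lowest weight elements of weight $\ge 1$ all lie in $L_1$), once it contains $Q^{e_2} e_{-1}$ it contains every $\ad(\VR)$-translate; the reverse containment is the weight bound just described, since $K_{-3}, K_{-1}$ and the weight-$0$ line of $\t K_0$ (spanned by $Y$) cannot appear — $Y$ has weight~$0 < 1$ but is not a $\dU$-multiple of $Q^{e_2}$, which one checks using $\pi_\lambda(Y) = y(\lambda) = (\lambda-\oh)q(\lambda)$ versus $\pi_\lambda(Q^{e_2}) = 0$ (Lemma~\ref{ZQe2}, Lemma~\ref{U3stuff}(i)).

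For~(ii), I would invoke \PBW\ with $\da$ on the right as in Proposition~\ref{H2ell}(i): the $\VR$-submodule of $\dU^3$ generated by any set of lowest weight elements has weights bounded below by the minimal such weight. Since $L_1$ is $\da$-generated by lowest weight elements of weight $\ge 1$, its $\VR$-span has weights in $1 + \bN$, and one checks this forces it back into $L_1$ (using that the only $\da$-summands of $\dU^3$ of weight~$\ge 1$ are $K_1, K_2, \ldots$, and that $K_1 \subset \t K_0$ is the unique $\da$-copy of $\F_1$ inside $\t K_0$ by Proposition~\ref{U3a}). The same argument applied to $L_0 = \t K_0 + L_1$ (generated over $\da$ by $Y$, of weight~$0$, plus the weight-$\ge 1$ generators) shows its $\VR$-span has weights in $\bN$; combined with the fact that the weight-$0$ and weight-$1$ parts already sit in $\t K_0 \subseteq L_0$, this gives $\VR$-invariance. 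Similarly for $L_{-1} = K_{-1} \oplus L_1$, generated over $\da$ by $(Q-\tfrac13)e_{-1}$ of weight~$-1$; here I use Lemma~\ref{U3stuff}(iii), $K_{-1} = (Q-\tfrac13)\VR$, to see that the $\VR$-span of this single lowest weight element together with $L_1$ lands in $K_{-1} \oplus L_1$.

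For~(iii), the quotient $L_0/L_1$ is $\da$-isomorphic to $\t K_0 / K_1 \cong \t\F_0/\F_1 \cong \F_0$ by Proposition~\ref{U3a}, and $L_{-1}/L_1 \cong K_{-1} \cong \F_{-1}$; the point is to upgrade these to $\VR$-isomorphisms. This is the elementary principle already used twice in the excerpt (proof of Proposition~\ref{H2ell}(ii), and Lemma~\ref{S2a}): a $\VR$-module that is $\da$-isomorphic to $\F_\mu$ is in fact $\VR$-isomorphic to $\F_\mu$, applied with $\mu = 0$ and $\mu = -1$. (In the $\mu=0$ case one must note the quotient has no trivial submodule — equivalently its $0$-weight vector $Y + L_1$ is not killed by $e_{-1}$ — which follows from $\ad(e_{-1})$ acting nontrivially, as $Y$ is the $\da$-generator of $\t K_0 \cong \t\F_0$.)

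The main obstacle I anticipate is part~(i), specifically the reverse inclusion $\dU^3 \cap \la Q^{e_2}\ra \subseteq L_1$: one must rule out that $Y$ (or the weight-$(-3)$, weight-$(-1)$ lowest weight elements of $\dU^3$) lies in $\la Q^{e_2}\ra$. The \PBW\ weight bound handles $K_{-3}$ and $K_{-1}$ cleanly, but $Y$ has weight~$0 \not< 1$, so it is not excluded on weight grounds alone; the separation must come from the representation-theoretic obstruction $\pi_\lambda(Q^{e_2}) = 0$ while $\pi_\lambda(Y) = y(\lambda) \not\equiv 0$, which shows $Y \notin \la Q^{e_2}\ra$ because the latter annihilates every $\F_\lambda$ (Theorem~\ref{I2}). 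Everything else is a routine application of the \PBW-with-$\da$-on-the-right bookkeeping and the "$\da$-iso $\F_\mu$ implies $\VR$-iso $\F_\mu$" principle.
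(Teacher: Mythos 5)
The central gap is in your forward inclusion for part~(i). You argue that once $\dU^3 \cap \la Q^{e_2} \ra$ contains $Q^{e_2} e_{-1}$, $\ad$-invariance together with a ``uniserial-type structure'' of $\dU^3$ forces it to contain all of $L_1 = K_1 \oplus K_2 \oplus \cdots$. But Proposition~\ref{U3a} gives only an $\da$-decomposition with multiplicities (e.g.\ $2\F_3$); unlike $\dU^2$, the module $\dU^3$ is not uniserial, and $\ad$-translates of $Q^{e_2} e_{-1}$ do not reach $K_2$ or the extra copies in $K_3, K_4, \ldots$. Concretely: since $Q^{e_2} e_{-1}$ is a lowest weight element of weight~$1$, PBW with $\da$ on the right shows that the weight-$2$ space of the $\VR$-submodule it generates is spanned by $\ad(e_1)(Q^{e_2} e_{-1})$ alone, hence is $1$-dimensional, whereas the weight-$2$ space of $L_1$ is $2$-dimensional (coming from $K_1$ and the lowest weight line of $K_2$). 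So the generated submodule is a proper subspace of $L_1$, and the containment $L_1 \subseteq \la Q^{e_2} \ra$, though true, is not established by your reasoning. The paper gets it directly from representation theory: each $K_\mu$ with $\mu \ge 1$ is a $\db$-invariant subspace $\db$-isomorphic to $\F_\mu$ with $\mu > 0$, so Lemma~\ref{F action} gives $\pi_\lambda(K_\mu) = 0$ for all $\lambda$, i.e.\ $L_1 \subseteq \Ann_{\VR}(\F_\Lambda)$, which is $\la Q^{e_2} \ra$ by Theorem~\ref{I2}.

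Your reverse inclusion is in the right spirit and close to the paper's (which uses $\la Q^{e_2} \ra = \Ann_{\VR}(\F_\Lambda)$ and the linear independence of the non-zero operators in Corollary~\ref{U3 LWVs}), but your weight bound is not derived correctly: the two-sided ideal $\la Q^{e_2} \ra$ contains elements of arbitrarily negative weight, such as $e_{-1}^k Q^{e_2}$, so ``the $\dU$-module generated by $Q^{e_2}$ has minimal weight $\ge 2$'' holds only for the adjoint module $H_2$. To bound weights you must work degree by degree (the degree-$\le 3$ part of the ideal lies in $H_2 + \VR H_2 + H_2 \VR$, which has weights $\ge 1$), or simply argue as in the paper that a lowest weight element of $\dU^3$ of weight $\le 1$ lying in $\Ann_{\VR}(\F_\Lambda)$ must be a multiple of $Q^{e_2} e_{-1}$, because $\px^3$, $\big(q(\Lambda) - {\ts\frac{1}{3}}\big)\px$, and $y(\Lambda)$ are linearly independent and non-zero. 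Parts (ii) and (iii) of your sketch do follow the paper's argument (invariance of $L_1$ from (i), PBW with $\da$ on the right for $L_0$ and $L_{-1}$, and the ``$\da$-isomorphic to $\F_\mu$ implies $\VR$-isomorphic'' principle); only note that in your parenthetical for (iii), $Y$ is a lowest weight element, so its image in $L_0/L_1$ \emph{is} killed by $e_{-1}$ --- which is consistent, since $\F_0$ does contain the trivial submodule $\bC$, and no such extra check is needed.
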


\begin{proof}
For~(i), Lemma~\ref{F action} implies that $L_1$ is contained in $\Ann_{\VR}(\F_\Lambda)$, which is $\la Q^{e_2} \ra$ by Theorem~\ref{I2}.  The converse follows from the fact that the non-zero operators in Corollary~\ref{U3 LWVs} are linearly independent.

Consider~(ii).  By~(i), $L_1$ is $\VR$-invariant.  Since $L_0$ is $\da$-invariant, applying a PBW basis of $\dU$ with $\da$ on the right, as in the proof of Proposition~\ref{H2ell}, shows that the weights of $\ad(\dU) L_0$ are all non-negative.  Therefore it is simply $L_0$.  The same procedure shows that $\ad(\dU) L_{-1}$ and $L_{-1}$ have the same $0$-weightspace, which implies that they are equal.  For~(iii), recall that any $\VR$-module $\da$-isomorphic to $\F_\lambda$ is $\VR$-isomorphic to it.
\end{proof}

\begin{cor} \label{L action}
$L_1$, $L_0$, and $L_{-1}$ have the following images under $\pi_\Lambda$:
\begin{equation*}
   \pi_\Lambda(L_1) = 0, \qquad \pi_\Lambda(L_0) = y(\Lambda) \bC[x],
   \qquad \pi_\Lambda(L_{-1}) = \big( q(\Lambda) - {\ts\frac{1}{3}} \big) \pi_\Lambda(\VR).
\end{equation*}
\end{cor}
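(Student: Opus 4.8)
The plan is to prove Corollary~\ref{L action} by combining the $\VR$-module descriptions of $L_1$, $L_0$, $L_{-1}$ from Proposition~\ref{L} with the lowest weight element computations in Corollary~\ref{U3 LWVs} and the established fact that $\pi_\Lambda$ is a $\VR$-map. The guiding principle, already used in the proofs of Lemma~\ref{ZQe2} and in Corollary~\ref{ZQS}, is that each of these $\VR$-modules is generated (as a $\VR$-module) by a distinguished lowest weight element, so it suffices to compute $\pi_\Lambda$ on that generator and then propagate by $\VR$-covariance.

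\textbf{Step 1: the vanishing $\pi_\Lambda(L_1) = 0$.} This is immediate from Proposition~\ref{L}(i): $L_1 = \dU^3 \cap \la Q^{e_2} \ra$ is contained in $\la Q^{e_2} \ra$, which by Theorem~\ref{I2} is exactly $\bigcap_\lambda \Ann_{\VR}(\F_\lambda) = \Ann_{\VR}(\F_\Lambda)$. Alternatively one can cite Lemma~\ref{F action} directly, since $L_1 = K_1 \oplus K_2 \oplus \cdots$ is a sum of $\VR$-modules $\da$-isomorphic (hence $\VR$-isomorphic, since weights are bounded below) to $\F_\mu$ with $\mu \ge 1$.

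\textbf{Step 2: the image of $L_0$.} By Proposition~\ref{L}(iii), $L_0/L_1 \cong \F_0$, so $L_0$ is $\VR$-generated by any preimage of a lowest weight generator of $\F_0$; the natural choice is $Y$, whose class in $L_0/L_1$ is the $0$-lowest weight vector. From Corollary~\ref{U3 LWVs} (equivalently Lemma~\ref{U3stuff}(i)) we have $\pi_\Lambda(Y) = y(\Lambda)$. Since $\pi_\Lambda$ is a $\VR$-map and $\pi_\Lambda(L_1) = 0$ by Step~1, it factors through $L_0/L_1 \cong \F_0 = dx^0\,\bC[x]$; acting by $e_1 = x^2\px$ repeatedly on the constant $y(\Lambda)$ (exactly as in the $H_0$ case of Lemma~\ref{ZQe2}) sweeps out $y(\Lambda)\,\bC[x]$, and nothing more can appear because $\F_0$ is spanned by the $e_1$-orbit of its lowest weight vector. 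Hence $\pi_\Lambda(L_0) = y(\Lambda)\,\bC[x]$.

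\textbf{Step 3: the image of $L_{-1}$.} By Proposition~\ref{L}(iii), $L_{-1}/L_1 \cong \F_{-1}$, and by Lemma~\ref{U3stuff}(iii) we have $K_{-1} = (Q - \tfrac13)\VR$; since $L_{-1} = K_{-1} \oplus L_1$ and $\pi_\Lambda(L_1)=0$, it follows that $\pi_\Lambda(L_{-1}) = \pi_\Lambda\big((Q-\tfrac13)\VR\big)$. Now $Q$ is central in $\dU(\da)$ and more importantly $\pi_\Lambda(Q) = q(\Lambda)$ is a scalar (the universal version of \eqref{Qq}), so $\pi_\Lambda\big((Q-\tfrac13)X\big) = (q(\Lambda)-\tfrac13)\,\pi_\Lambda(X)$ for every $X \in \VR$. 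Therefore $\pi_\Lambda(L_{-1}) = (q(\Lambda) - \tfrac13)\,\pi_\Lambda(\VR)$, as claimed. I expect the only mild subtlety here is the interchange ``$\pi_\Lambda((Q-\tfrac13)X) = (q(\Lambda)-\tfrac13)\pi_\Lambda(X)$,'' which rests on $\pi_\Lambda(Q)$ being genuinely a scalar operator on $\F_\Lambda$ rather than merely an element acting by $q(\lambda)$ on each lowest weight vector; this is already implicit in the definition of $q$ and in Lemma~\ref{U3stuff}, so no real obstacle arises. The main conceptual point throughout is simply that $\pi_\Lambda$ being $\VR$-covariant reduces every computation to its value on one lowest weight generator, after which Proposition~\ref{L} supplies the module structure needed to finish.
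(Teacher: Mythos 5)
Your Steps 1 and 3 are fine and coincide with what the paper treats as ``immediate from the preceding results'': $\pi_\Lambda(L_1)=0$ comes from Proposition~\ref{L}(i) with Theorem~\ref{I2} (or from Lemma~\ref{F action}), and $\pi_\Lambda(L_{-1})=\bigl(q(\Lambda)-\tfrac{1}{3}\bigr)\pi_\Lambda(\VR)$ comes from $L_{-1}=K_{-1}\oplus L_1$, $K_{-1}=(Q-\tfrac{1}{3})\VR$, and the fact that $\pi_\Lambda(Q)=q(\Lambda)$ is a scalar.

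The gap is in Step 2, in the sentence ``acting by $e_1$ repeatedly on the constant $y(\Lambda)$ sweeps out $y(\Lambda)\,\bC[x]$, \ldots\ because $\F_0$ is spanned by the $e_1$-orbit of its lowest weight vector.'' That is false: in $\F_0$ the constant spans the trivial submodule $\bC$ and $e_1\cdot 1=0$, so $\F_0$ is \emph{not} generated by its lowest weight vector (it is generated by its $1$-weight space, as the paper itself emphasizes in the proof of Theorem~\ref{Il}). Concretely, the relevant action on the image is by commutators, and $[\pi_\Lambda(e_1),y(\Lambda)]=0$ since $y(\Lambda)$ is central in $\bC[\Lambda,x,\px]$; your sweep therefore produces nothing in positive weight, and what remains of your argument only shows that $\pi_\Lambda(L_0)$ is a quotient of $\F_0$ containing the constant $y(\Lambda)$, without identifying its positive-weight components. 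Note that in Lemma~\ref{ZQe2} the sweep for $H_0$ starts from the weight-one element $Z$ and its action $q(\lambda)x$, not from the constant $q(\lambda)$ --- and the repair here is the exact analog, which is what the paper's proof means by taking $\ad(e_{-1})$-preimages: choose $Y_1\in\t K_0$ with $\ad(e_{-1})Y_1=Y$ (possible since $\t K_0\acong\t\F_0$). Then $[\px,\pi_\Lambda(Y_1)]=y(\Lambda)$, and since the commutant of $\px$ in $\bC[\Lambda,x,\px]$ is $\bC[\Lambda,\px]$, which has no nonzero weight-one elements, $\pi_\Lambda(Y_1)=y(\Lambda)x$ is forced. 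Now acting by $e_1$ repeatedly on $y(\Lambda)x$ gives $y(\Lambda)x^n$ for all $n\ge 1$, and your quotient-of-$\F_0$ observation (weight multiplicities at most one) supplies the reverse inclusion, yielding $\pi_\Lambda(L_0)=y(\Lambda)\,\bC[x]$.
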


\begin{proof}
For~$L_0$, take repeated $\ad(e_{-1})$-preimages of the actions of $Q$ and $Y$.  The rest is immediate from the preceding results.
\end{proof}

\section{Annihilators over $\VR$ generated in degrees 2 and~3} \label{F_lambda}

Here we use Proposition~\ref{multimachine} to prove the third result of \cite{CM07}.  We expect that our approach will lead to descriptions of the annihilators of the indecomposable modules composed of two \tdm s described in \cite{FF80}.

\begin{thm}[\cite{CM07}] \label{Il}
For $\lambda \not= 0$ or~$1$, $\Ann_{\VR}(\F_\lambda) = \big\la Q - q(\lambda), Y - y(\lambda) \big\ra$, and it is generated in degress~$2$ and~$3$.
\end{thm}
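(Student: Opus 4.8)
The plan is to apply Proposition~\ref{multimachine} with $\dg = \VR$, $r = 2$, $d_1 = 2$, $d_2 = 3$, the nested subspaces $J_2 := \db$ and $J_3 := \dc$, and the candidate ideal $I := \la Q - q(\lambda), Y - y(\lambda) \ra \cap \dU_3$, together with the cross-section
\begin{equation*}
   \J := \dU_1(\VR) \dU(\dc) + \dU_2(\VR) \dU(\db).
\end{equation*}
More precisely I would take $\J$ to be spanned by the monomials $e_n e_{-1}^k$ and $e_{-1}^k$ (the degree-$\le 2$ part living over $\dc$, matching Theorem~\ref{I0}) together with the degree-$\ge 3$ monomials $e_m e_n e_0^{k_0} e_{-1}^{k_{-1}}$ and $e_n e_0^{k_0} e_{-1}^{k_{-1}}$ and $e_0^{k_0} e_{-1}^{k_{-1}}$ of degree $\ge 3$ (matching the $\db$-cross-section of Theorem~\ref{I2}); the point of the nesting $J_2 \supset J_3$ is exactly to splice the degree-$2$ cross-section of $\F_0$-type onto the degree-$\ge 3$ cross-section of $\F_\Lambda$-type. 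First I would verify that $I \subseteq \Ann_{\VR}(\F_\lambda)$: the degree-$2$ generator $Q - q(\lambda)$ kills $\F_\lambda$ by~\eqref{Qq} and Proposition~\ref{no a}, and $Y - y(\lambda)$ kills it by Lemma~\ref{U3stuff}(i). For condition~(e) I would show $I \cap \dU_2 = \bC(Q - q(\lambda)) \oplus H_2$: here I use Corollary~\ref{ZQS}(v) and~(iii) to see that $\la Q - q(\lambda) \ra \cap \dU^2 = \bC(Q - q(\lambda)) \oplus H_2$ (for $\lambda \ne 0, 1$ the quotient module behaves like the generic $\F_0$), and then check that adjoining the degree-$3$ generator $Y - y(\lambda)$ produces nothing new in degree~$\le 2$ — this follows because $\ad(\VR)$ applied to $Y - y(\lambda)$ lands in degree~$3$, and left/right multiplication by $\VR$ raises degree, so the only way to drop to degree~$2$ would be through $e_{-1}$-type cancellations, which are controlled by Lemma~\ref{U3stuff} and Corollary~\ref{U3 LWVs}.

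The injectivity condition~(a) is where I expect the real work. For the degree-$\le 2$ monomials over $\dc$ injectivity on $\F_\lambda$ is just the statement that $x^{n+1}\px^{k+1}$ and $\px^k$ are independent in $\bC[x,\px]$, exactly as in Theorem~\ref{I0}. For the degree-$\ge 3$ monomials I would mimic the $(\Lambda, \px)$-symbol argument of Theorem~\ref{I2}, but now specialized to a fixed $\lambda \ne 0, 1$ rather than the universal $\Lambda$: the key is that with $q(\lambda) \ne 0$ (which holds precisely because $\lambda \ne 0, 1$), the operator $\pi_\lambda(e_0^{k_0} e_{-1}^{k_{-1}})$ still has a nonvanishing top $x$-degree term and the operators $\pi_\lambda(e_n e_0^{k_0} e_{-1}^{k_{-1}})$ have the leading terms $x^{n+k_0+1}\px^{k_0+k_{-1}+1}$ visible in~\eqref{symbols}; organizing the weight-$\mu$, degree-$k$ basis elements exactly as in the two displayed lists in the proof of Theorem~\ref{I2} and tracking $\px$-degree (which now plays the role $\Lambda$-degree played there, since specialization $\Lambda \mapsto \lambda$ collapses $\Lambda$-degree but leaves the total degree filtration on $\bC[x,\px]$ intact) gives the independence. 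I should be careful that the degree-$2$ block and the degree-$\ge 3$ block don't interfere: a weight-$\mu$ element of $\J$ of degree~$2$ and one of degree~$3$ could a priori have proportional images, but the degree-$2$ ones have the form $x^{\mu+1}\px$, $x^{\mu+2}\px^2$, $\px^k$ while the degree-$3$ ones have strictly larger $x$-degree or $\px$-degree in their symbols, so they stay independent.

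Conditions~(b) and~(c) are essentially bookkeeping. Condition~(b) says $\dU_1(\VR) \subset \J$, which is built in. For~(c) I need, for $k = 2$, $\proj_2(\J_2) = \S^2(\db)\cdot 1 + \dots$ wait — in the indexing of Proposition~\ref{multimachine} the exponents use $d_1 = 2$, so condition~(c) for $s = 1$, $k = 2$ reads $\proj_2(\J_2) = \S^2(J_2)\S^1(\VR)$, i.e.\ $\S^2(\db)\,\VR$, which should be $\db\,\VR$ — hmm, I need $\S^1(J_2)\S^1(\VR) = \db \cdot \VR$; for $k \ge 3$ and $s = 2$ it reads $\proj_k(\J_k) = \S^{k-1}(\dc)\S^1(\VR) = \dc^{k-1}\VR$. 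These are read off the monomial spanning set the same way as in Theorems~\ref{I0} and~\ref{I2}, using Lemma~\ref{consequences} where convenient. Condition~(d) for $s = 1$ is the statement $\proj_2(I_2) + \db\,\VR = \S^2(\VR)$, which is precisely the content of condition~(d) in Theorem~\ref{I2} since $\proj_2(\bC(Q-q(\lambda)) \oplus H_2) = \proj_2(H_2)$ modulo $\VR\cdot 1 \subset \db\,\VR$; and for $s = 2$ it is $\proj_3(I_3) + \dc^2\,\VR = \S^3(\VR)$, which I would reduce via Lemma~\ref{consequences}(iii) to a weight-space dimension count: $\proj_3(I)$ must fill $\S^3(\VR/\dc)$. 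Using Lemma~\ref{U3stuff}, Corollary~\ref{U3 LWVs}, and Proposition~\ref{L}, the image $\proj_3(I_3)$ contains (the top symbols of) $Q^{e_2}e_{-1}$, $Y$, and a full $\F_1$-worth and the generic $\F_{2\ell}$-worth of lowest weight elements, and $\VR/\dc \bcong \F_1$, so the partition counts of Lemma~\ref{Sym F} match. Finally, once Proposition~\ref{multimachine} delivers $\Ann_{\VR}(\F_\lambda) = \la I \ra$ and that it is generated in a subset of $\{2,3\}$, I would rule out generation in degree~$2$ alone: by Corollary~\ref{ZQS} the degree-$2$ part $\bC(Q - q(\lambda)) \oplus H_2$ generates $\bC(Q-q(\lambda)) \oplus H_2$ plus ideal terms, whose image under $\pi_\Lambda$ (or its structure in $\dU^3$) misses $Y$ — concretely $Y - y(\lambda) \notin \la Q - q(\lambda) \ra$ because $\dU^3 \cap \la Q - q(\lambda) \ra = L_1$ by Proposition~\ref{L}(i) (the ideal generated in degree~$2$ meets $\dU^3$ only in $L_1$, which does not contain the $\t K_0$-generator $Y$), and degree~$3$ alone cannot generate the degree-$2$ element. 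Hence both degrees are needed. The main obstacle, as flagged, is the symbol-independence argument for~(a) across the spliced degrees; everything else is a recombination of the machinery already assembled for Theorems~\ref{I0} and~\ref{I2}.
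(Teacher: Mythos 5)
There is a genuine gap, and it is structural. Your splice runs in the wrong direction. In Proposition~\ref{multimachine} the nesting $J_{d_1} \supset J_{d_2}$ forces the cross-section to be wide in degree $d_1$ and narrow in degrees $\ge d_2$ (the annihilator only grows with degree): with your declared $J_2 = \db$, $J_3 = \dc$, condition~(c) demands $\proj_2(\J \cap \dU_2(\VR)) = \db \, \VR$ and $\proj_k(\J \cap \dU_k(\VR)) = \S^{k-1}(\dc)\,\VR$ for $k \ge 3$. Your $\J = \dU_1(\VR)\dU(\dc) + \dU_2(\VR)\dU(\db)$ does the opposite: its degree-$2$ part projects only onto $\dc\,\VR$, so (c) already fails at $k=2$, and its degree-$k$ part for $k \ge 3$ projects onto $\S^{k-2}(\db)\S^2(\VR)$, which is far too large for~(a): per weight, $\pi_\lambda(\dU_k(\VR))$ lies in the order-$\le k$ part of $\bC[x,\px]$ and so has dimension at most $k+1$, while your degree-$3$ slice alone has weight-space dimensions growing linearly in the weight. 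Concretely, $(Q - q(\lambda))e_{-1} = e_0^2 e_{-1} - e_0 e_{-1} - e_1 e_{-1}^2 - q(\lambda)e_{-1}$ is a nonzero element of your $\J$ annihilating $\F_\lambda$, so injectivity fails outright. Your assertion that specializing $\Lambda \mapsto \lambda$ ``leaves the independence intact'' is exactly what is false: the new annihilating elements $(Q - q(\lambda))e_{-1}$ and $Y - y(\lambda)$ — the content of the theorem — are precisely linear dependences among the specialized operators that were independent over $\Lambda$.

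What you are missing is the paper's key device: Proposition~\ref{multimachine} is not applied to $\F_\lambda$ at all, but to the augmented module $\bC \oplus \F_\lambda$. The reason is that $\Ann_{\VR}(\F_\lambda)$ contains elements with nonzero constant part, $Q - q(\lambda)$ itself being the first; the natural cross-section $\J = \dU_1(\VR)\dU(\dc) + (\VR)\db$ (which is what conditions (b)--(d) with $J_2 = \db$, $J_3 = \dc$ call for) contains $1$, $e_0$, $e_0^2$, $e_1 e_{-1}$ and hence $Q - q(\lambda)$, so (a) cannot hold for $\F_\lambda$ alone. Augmenting by the trivial module removes every annihilating element with nonzero constant term, giving $\Ann_{\VR}(\bC \oplus \F_\lambda) \cap \dU_2(\VR) = H_2$; the machine then runs with $d_1 = 2$, $d_2 = 3$, $J_2 = \db$, $J_3 = \dc$, the $\J$ above, and $I = \Ann_{\VR}(\bC \oplus \F_\lambda) \cap \dU_3(\VR)$, whose structure (Lemma~\ref{I3}, resting on the $\dU_3$-analysis of Section~\ref{U3} and Proposition~\ref{L}) is needed to check~(d) in degree~$3$. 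One proves $\Ann_{\VR}(\bC \oplus \F_\lambda) = \la Q^{e_2},\, (Q - q(\lambda))e_{-1},\, Y - (\lambda - \oh)Q \ra$ and recovers the theorem from the codimension-$1$ relation between the two annihilators. Finally, your ``both degrees are needed'' step misreads Proposition~\ref{L}(i), which concerns $\la Q^{e_2} \ra$, not $\la Q - q(\lambda) \ra$; the paper's argument is simply that $\Ann_{\VR}(\F_\lambda)$ and $\Ann_{\VR}(\F_{1-\lambda})$ have the same intersection with $\dU_2(\VR)$ but differ in degree~$3$, containing $Y - y(\lambda)$ and $Y + y(\lambda)$ respectively.
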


\begin{proof}
Proposition~\ref{multimachine} does not apply to $\F_\lambda$ directly, but rather, to its augmentation $\bC \oplus \F_\lambda$, where $\bC$ is the trivial module: we will prove that
\begin{equation} \label{C+F_lambda}
   \Ann_{\VR}(\bC \oplus \F_\lambda) =
   \big\la Q^{e_2}, (Q - q(\lambda)) e_{-1}, Y - (\lambda - \oh) Q \big\ra.
\end{equation}
The theorem will then be a consequence of the following observations:
\begin{itemize}
\item
$\Ann_{\VR}(\bC \oplus \F_\lambda)$ is of codimension~$1$ in $\Ann_{\VR}(\F_\lambda)$;
\smallbreak \item
$Q - q(\lambda) \not\in \Ann_{\VR}(\bC \oplus \F_\lambda)$;
\smallbreak \item
$\big\la Q^{e_2}, (Q - q(\lambda)) e_{-1}, Y - (\lambda - \oh) Q \big\ra \subset \big\la Q - q(\lambda), Y - y(\lambda) \big\ra$.
\end{itemize}

The ingredients for Proposition~\ref{multimachine} applied to $\bC \oplus \F_\lambda$ are
\begin{align*}
   & r := 2,\quad d_1 := 2,\quad d_2 := 3,\quad J_2 := \db,\quad J_3 := \dc, \\[4pt]
   & \J := \dU_1(\VR) \dU(\dc) + (\VR) \db =
   \spanrm \big\{ e_n e_{-1}^k, e_n e_0, e_{-1}^k: n, k \in \bN \big\}, \\[4pt]
   & I := \Ann_{\VR}(\bC \oplus \F_\lambda) \cap \dU_3.
\end{align*}

We must check that conditions~(a)-(d) of Proposition~\ref{multimachine} hold.  Recall the augmentation ideal $\dU^+$, the annihilator of $\bC$.  Note that
\begin{align*}
   & \pi_\lambda(e_n) = x^{n+1} \px + \lambda (n + 1) x^n, \\[4pt]
   & \pi_\lambda(e_n e_0 - e_n - e_{n+1} e_{-1}) = q(\lambda) (n+1) x^n.
\end{align*}
Together with a symbol argument, this shows that $\pi_\lambda$ is injective on $\J \cap \dU^+$, which verifies~(a).  Conditions (b) and~(c) are clear, and here (e) is vacuous.

In order to check (d) we must describe $I$ more explicitly.  Define
\begin{align*}
   & \t K_0(\lambda) := (\t K_0 \oplus G_0) \cap \Ann_{\VR}(\bC \oplus \F_\lambda), \\[4pt]
   & K_{-1}(\lambda) := (K_{-1} \oplus \VR) \cap \Ann_{\VR}(\bC \oplus \F_\lambda).
\end{align*}
The reader may prove the following lemma using Lemma~\ref{U3stuff} and Corollary~\ref{U3 LWVs}:

\begin{lemma} \label{I3}
Assume $\lambda \not= 0$ or~$1$.
\begin{enumerate}
\item[(i)]
$\t K_0(\lambda) \acong \t\F_0$, with lowest weight elements $Y - (\lambda - \oh) Q$ and $Q^{e_2} e_{-1}$.
\smallbreak \item[(ii)]
$K_{-1}(\lambda) = (Q - q(\lambda)) \VR \acong \F_{-1}$.
\smallbreak \item[(iii)]
$I = K_{-1}(\lambda) \oplus (\t K_0(\lambda) + L_1) \oplus H_2$, and $\t K_0(\lambda) \cap L_1 = K_1$.
\end{enumerate}
\end{lemma}

Condition~(d) is to be be checked in two degrees, $2$ and~$3$.  In degree~$2$, we need
\begin{equation*}
   \o I_2 + \db \VR = \S^2(\VR).
\end{equation*}
By Lemma~\ref{I3}, $I_2$ is the space $H_2$ from Proposition~\ref{H2ell}, so this was already proven in the verification of~(d) for Theorem~\ref{I2}.

In degree~$3$, we must prove that
\begin{equation*}
   \o I_3 + \dc^2 \VR = \S^3(\VR).
\end{equation*}
By Lemma~\ref{I3}, $\sym_3$ projects both $I$ and $L_{-1} + L_0$ to the unique subspace of $\S^3(\VR)$ that is $\da$-isomorphic to $\F_{-1} \oplus \t\F_0 \oplus \F_2 \oplus \cdots$.  On the other hand, it projects $\dc^2 \VR$ to a $\db$-copy of $\F_{-3}$.  Therefore (d) follows from~\eqref{S3b}.

At this point we have verified that Proposition~\ref{multimachine} applies to prove
\begin{equation*}
   \Ann_{\VR}(\bC \oplus \F_\lambda) =
   \big\la K_{-1}(\lambda) + \t K_0(\lambda) + L_1 + H_2 \big\ra.
\end{equation*}
Consider~\eqref{C+F_lambda}.  Clearly its left side contains its right side.  Define
\begin{equation*}
   M_0(\lambda) := \t K_0(\lambda) + L_1 + H_2, \qquad
   M_{-1}(\lambda) := K_{-1}(\lambda) \oplus L_1.
\end{equation*}
Use Propositions~\ref{H2ell} and~\ref{L} and Corollary~\ref{U3 LWVs} to prove:

\begin{lemma}
\begin{enumerate}
\item[(i)]
$L_1 \oplus H_2 = \dU_3 \cap \la Q^{e_2} \ra$.
\smallbreak \item[(ii)]
$M_0(\lambda)$ and $M_{-1}(\lambda)$ are $\VR$-submodules of $\dU_3$.
\smallbreak \item[(iii)]
As $\VR$-modules, $M_0(\lambda) / (L_1 \oplus H_2) \cong \F_0$ and $M_{-1}(\lambda) / L_1 \cong \F_{-1}$.
\end{enumerate}
\end{lemma}

Write $\I$ for the right side of~\eqref{C+F_lambda}.  It contains $L_1 \oplus H_2$ because it contains $Q^{e_2}$.  It contains $M_{-1}(\lambda)$, because $(Q - q(\lambda)) e_{-1}$ projects to a lowest weight element of $M_{-1}(\lambda) / L_1$, which is irreducible under $\VR$.

It remains only to prove that $\I$ contains $M_0(\lambda)$.  By Lemma~\ref{I3}, $Y - (\lambda - \oh) Q$ projects to a lowest weight element of $M_0(\lambda) / (L_1 \oplus H_2)$, but this does not suffice, because $\F_0$ is not $\VR$-irreducible.  However, it will suffice to prove that $\I$ contains an $\ad(e_{-1})$-preimage of $Y - (\lambda - \oh) Q$, because $\F_0$ is generated by its $1$-weightspace.

In order to write such an element explicitly, recall that $Z$ is an $\ad(e_{-1})$-preimage of $Q$ in $G_0$, and let $Y_1$ be any $\ad(e_{-1})$-preimage of $Y$ in $\t K_0$ (there is a line of such elements, parallel to $\bC Q^{e_2} e_{-1}$).  Set
\begin{equation*}
   X := Z \big( Y - (\lambda - \oh) Q \big) - \big( Q - q(\lambda) \big)
   \big( Y_1 - (\lambda - \oh) Z \big).
\end{equation*}
Since $\I$ contains $K_{-1}(\lambda)$, it contains $(Q - q(\lambda)) \dU^+$, and hence $X$.  It is immediate that $\ad(e_{-1}) X = q(\lambda) (Y - (\lambda - \oh) Q)$.  Since $q(\lambda) \not= 0$, 
\eqref{C+F_lambda} is proven.

To verify that $\Ann_{\VR}(\F_\lambda)$ is not generated simply in degree~$2$, note that
\begin{equation*}
   \Ann_{\VR}(\F_\lambda) \cap \dU_2 = \bC \big(Q - q(\lambda) \big) \oplus H_2
   = \Ann_{\VR}(\F_{1-\lambda}) \cap \dU_2,
\end{equation*}   
but $\Ann_{\VR}(\F_\lambda) \not= \Ann_{\VR}(\F_{1-\lambda})$: the former contains $Y - y(\lambda)$, while the latter contains $Y + y(\lambda)$.
\end{proof}

\section{Remarks and questions} \label{Remarks}

Here we elaborate on some ideas mentioned in the introduction, make some conjectures, and pose some more tentative questions.

One might ask whether or not Proposition~\ref{machine} applies to any modules of finite dimensional Lie algebras.  In fact, it applies to the Verma modules of $\dsl_2$.  We may illustrate this by showing that it applies to the $\F_\lambda$ restricted to $\da$, as they are dual to the Verma modules.  The reader may check that it suffices to take $d := 2$, $I := \bC (Q - q(\lambda))$, $J := \spanrm \{ e_{-1}, e_1 \}$, and
\begin{equation*}
   \J := \spanrm \big\{ e_1^{j_1} e_{-1}^{j_{-1}},\, e_1^{j_1} e_0 e_{-1}^{j_{-1}}:
   j_1,\, j_{-1} \in \bN \big\}.
\end{equation*}
Note that here $J$ is not a subalgebra of $\da$.  It would be amusing if Proposition~\ref{machine} applied to any modules of other finite dimensional simple Lie algebras.

Consider indecomposable $\db$-split modules of $\VR$ obtained by extending $\F_\lambda$ by $\F_\mu$.  These were classified in \cite{FF80} and have been studied by many authors.  They do not exist unless $(\lambda, \mu)$ is on the following list, in which case there is exactly one:
\begin{itemize}
\item
$(0, 1)$;
\smallbreak \item
$(\lambda, \lambda + \delta)$ for $\lambda \in \bC$ and $\delta = 2$, $3$, or $4$;
\smallbreak \item
$(-4, 1)$ and $(0, 5)$;
\smallbreak \item
$(-\frac{5}{2} + \rho, \frac{7}{2} + \rho)$ for $\rho = \pm \oh \sqrt{19}$.
\end{itemize}

These extensions are $\da$-split \iff\ $\F_\lambda$ and $\F_\mu$ have distinct $Q$-eigenvalues, i.e., $\lambda + \mu \not= 1$.  In particular, the extension of $\F_0$ by $\F_1$ is not $\da$-split.  Therefore, under $\da$ it is the module $\t\F_0$ occurring in Section~\ref{U3}.  It contains a $\VR$-submodule which is an extension of $\bC$ by $\F_1$.  This submodule is annihilated by $Q$ and $Y$ but not by $Z$, and we conjecture that in fact $Q$ and $Y$ generate its annihilator.

Another ``small'' extension is that of $\bC$ by $\F_2$, which occurs inside the extension of $\F_0$ by $\F_2$.  It is $\da$-split and is given by the Gel'fand-Fuchs cocycle: it is part of the coadjoint representation of the Virasoro Lie algebra.  The lowest weight elements which annihilate it were given in \cite{Ke19}: they include, in order of increasing degree,
\begin{equation*}
   \ad(S) Q^{e_2}, \qquad (Q - 2) e_{-1}, \qquad 2Y - 3Q, \qquad
   Q^{e_2} e_{-1}, \qquad Q (Q - 2),
\end{equation*}
but not $Q^{e_2}$.  We expect that Proposition~\ref{multimachine}, perhaps modified, can be used to deduce a minimal ``good'' generating set for the annihilator.  Clearly any such set must contain the degree~$2$ weight~$4$ element $\ad(S) Q^{e_2}$.  It would seem reasonable to guess the set of the first three lowest weight elements above as a candidate.

Coming from the direction of large modules rather than small ones, consider the ``universal difference~$\delta$ extension'', the direct sum of the extensions of $\F_\lambda$ by $\F_{\lambda + \delta}$ over all~$\lambda$, where $\delta$ is fixed at $2$, $3$, or~$4$.  Call its annihilator $\A_\delta$.  If one could describe $\A_\delta$, one might be able to get at the annihilators of the individual difference~$\delta$ modules by analyzing $\dU(\VR) / \A_\delta$.  This was the strategy used to describe $\Ann_{\VR}(\F_\lambda)$ in \cite{CM07}.

It is easy to see that $\A_\delta$ contains all lowest weight elements of weight~$> \delta$.  In particular, $\A_2$ and $\A_3$ both contain $\ad(S) Q^{e_2}$.  Recall from Proposition~\ref{U_3a} that, speaking imprecisely, there are two degree~$3$ weight~$3$ lowest weight elements.  Both are necessarily in $\A_2$.  One of them is $e_{-1} \ad(S) Q^{e_2}$; call the other one $X$.  It can be shown that $X$ is not in $\A_3$, implying that $\ad(S) Q^{e_2}$ does not generate $\A_2$.  We conjecture that $\ad(S) Q^{e_2}$ and $X$ generate $\A_3$, and that this can be proven using Proposition~\ref{multimachine} in degrees $2$ and~$3$, with $J_2 = \da$ and $J_3 = \db$.

Regarding the difference~$5$ and difference~$6$ extensions, both are opposite-dual pairs, so their annihilators are transpose pairs.  The annihilators of the difference~$5$ extensions are the same as those of the difference~$4$ extensions of $\F_1$ by $\F_5$ and of $\F_{-4}$ by $\F_0$.  A description of the difference~$6$ annihilator pair would be impressive.

Coming from the direction of generators rather than modules, one could study for example the ideals $\la \ad(S)^\ell Q^{e_2} \ra$ for $\ell \ge 1$.  Are they all distinct?  Are they the annihilators of any natural modules?

And one more question, related to the conjectures made at the end of \cite{SP23}: are the annihilators of $\bC$ and the $\F_\lambda$ the only primitive ideals of $\dU(\VR)$?

\section{Related work} \label{Related}

We conclude by discussing some related articles.  Note the following formulas:
\begin{equation} \label{sym formulas} \begin{array}{l}
   e_a e_b = \sym \big( e_a e_b + \oh (b - a) e_{a+b} \big), \\[4pt]
   e_a e_b^2 = \sym \big( e_a e_b^2 + (b - a) e_b e_{a+b}
   - {\ts\frac{1}{6}} a (b - a) e_{a+2b} \big).
\end{array} \end{equation}

\subsubsection*{Work of Sierra and Walton}
In \cite{SW14}, these authors proved that $\dU(\VR)$ is not Noetherian, resolving a conjecture of Dean and Small.  Their proof was non-constructive; in \cite{SW16} they gave a constructive proof, based on the annihilator of the universal Verma module $\F_\Lambda$ under the positive weight subalgebra $x^2 \VR$ of $\VR$.  They prove in Theorem~5.1 that $\Ann_{x^2 \VR} (\F_\Lambda)$ is generated by
\begin{equation*}
   g := e_1 e_5 - 4 e_2 e_4 + 3 e_3^2 + 2 e_6.
\end{equation*}

Let us relate $g$ to our description of $\Ann_{\VR}(\F_\Lambda)$.  By~\eqref{sym formulas}, $g$ is symmetric, as indeed it must be: otherwise, repeated applications of $\ad(e_{-1})$ would give an element of $\VR$.  This means we may work in $\S^2(\VR)$, where a short computation gives $\ad(e_{-1})^4 g = 24 Q^{e_2}$, as noted in Remark~5.15 of \cite{SW16}.  In fact, $g$ is determined up to a scalar: it must be in the $1$-dimensional space $(H_2)_6 \cap \dU(x^2 \VR)$, the linear combinations of the elements $\ad(e_1^{6-2\ell} S^{\ell+1}) e_{-1}^2$ with $\ell =1, 2, 3$ which have no $e_{-1} e_7$ or $e_0 e_6$ terms.

One also finds descriptions of $\Ann_{x^2 \VR} (\F_\lambda)$ in \cite{SW16}.  In Proposition~2.5 it is proven that for $\lambda = 0$ and~$1$ they are equal and are generated by
\begin{equation*}
   h_0 := e_1 e_3 - e_2^2 - e_4.
\end{equation*}
Again, this is symmetric and determined up to a scalar: $\ad(e_{-1})^3 h_0 = -24 Z$ is easily verified by working in $\S^2(\VR)$, and so $h_0$ must be an element of the $1$-dimensional space $(H_0)_4 \cap \dU(x^2 \VR)$, the linear combinations of the elements $\ad(e_1)^3 Z$ and  $\ad(e_1^{4-2\ell} S^{\ell+1}) e_{-1}^2$ with $\ell = 1, 2$ which have no $e_{-1} e_5$ or $e_0 e_4$ terms.

Regarding the case $\lambda \not = 0, 1$, note Remark~3.14 of \cite{SW16}: their parameter $a$ is our $1 - \lambda$.  They prove in Proposition~2.8 that $\Ann_{x^2 \VR} (\F_\lambda)$ is generated by three elements: $g$, which corresponds to $Q^{e_2}$;  a degree~$3$ weight~$5$ element $h_1$, and a degree~$4$ weight~$6$ element $h_3$.  Let us relate $h_1$ to our work.  It is
\begin{equation*}
   h_1 := e_1 e_2^2 - e_1^2 e_3
   + 2 (\lambda - 1) e_2 e_3 - (2\lambda - 3) e_1 e_4
   - (\lambda - 1) (\lambda - 2) e_5.
\end{equation*}
Use~\eqref{sym formulas} coupled with $e_b^2 e_a = -(e_a e_b^2)^T$ and Lemma~\ref{transpose parity} to deduce
\begin{equation*}
   h_1 = \sym \big( e_1 e_2^2 - e_1^2 e_3 + 2 (\lambda - \oh) (e_2 e_3 - e_1 e_4)
   - (q(\lambda) - {\ts\frac{1}{3}}) e_5 \big).
\end{equation*}
A not-too-long symbol computation in $\S^3 (\VR)$ gives
\begin{equation*}
   \ad(e_{-1})^5 (e_1 e_2^2 - e_1^2 e_3) = 240 (e_0^3 - e_{-1}^2 e_2);
\end{equation*}
it helps to observe that the action of $\ad(e_{-1})$ on $\S^k(\VR)$ preserves the space of linear combinations of monomials in the $e_i$ whose coefficients sum to zero.  The right side has the same symbol as $2 \cdot 5! (3Q e_0 - 2Y)$, and so Lemma~\ref{I3} implies
\begin{equation*}
   \ad(e_{-1})^5 h_1 = 6! \big( Q - q(\lambda) \big) e_0 - 4 \cdot 5! \big( Y - (\lambda - \oh) Q \big).
\end{equation*}
It can be shown that up to a scalar, $h_1$ is the only degree~$3$ element of $\Ann_{x^2 \VR} (\F_\lambda)$ of weight~$5$, and there are no such elements of lower weight.  We did not relate $h_3$ to our generators, but perhaps it satisfies similar uniqueness conditions.

\subsubsection*{Work of Billig and Futorny}
In \cite{BF16}, these authors make use of certain operators
\begin{equation*}
   \Omega^{(m)}_{k,s} := \sum_{a=0}^m (-1)^a \binom{m}{a} e_{k-a} e_{s+a}.
\end{equation*}
Let us consider them in the context of our Section~\ref{U2}.  They satisfy the binomial coefficient recursion $\Omega^{(m)}_{k,s} = \Omega^{(m-1)}_{k,s} - \Omega^{(m-1)}_{k-1,s+1}$, whence induction gives
\begin{equation*}
   \ad(e_{-1}) \Omega^{(m)}_{k,s} =
   (k+1-m) \Omega^{(m)}_{k-1 ,s} + (s+1) \Omega^{(m)}_{k, s-1}.
\end{equation*}

Note that $\Omega^{(m)}_{k,s}$ is in $\dU(\VR)$ if $k+1-m$ and $s+1$ are non-negative.  In this case, repeated applications of $\ad(e_{-1})$ map it eventually to $\Omega^{(m)}_{m-1, -1}$, and then to zero.  Consider $\Omega^{(m)}_{m-1, -1}$.  It follows from Lemma~\ref{S formula} that for $m \ge 3$ and odd, it is $0$, while for $m$ even, it is a non-zero multiple of the lowest weight element $\ad(S)^{m/2} e_{-1}^2$ of weight $m-2$.  Working from Proposition~\ref{H2ell}, one finds that
\begin{equation} \label{H Omega}
   H_{2\ell} = \spanrm \big\{ \Omega^{(m)}_{k,s}:\,
   k+1-m,\, s+1 \in \bN,\ m \ge 2\ell+1 \big\}.
\end{equation}

Suppose now that $V$ is a $\db$-trivial extension of $L$ tensor density modules, say $\F_{\lambda_1}, \ldots, \F_{\lambda_L}$, where $\lambda_1 \le \cdots \le \lambda_L$.  (Note that the Jordan-H\"older length of $V$ exceeds $L$ by the number of occurrences of $\F_0$.)  Let us refer to $\lambda_L - \lambda_1$ as $V$'s ``total jump''.  As intimated in Section~\ref{Remarks}, it is easy to prove that $H_{2\ell}$ annihilates $V$ whenever $2\ell$ exceeds the total jump.  The results of \cite{O'D18} lead us to conjecture that if $V$ is indecomposable, then it has total jump at most $2L + 2$, and so is annihilated by $H_{2L+4}$.

Corollary~3.4 of \cite{BF16} states that for every fixed $L$, there is an $m_L$ such that for all $(k, s)$, $\Omega^{(m_L)}_{k, s}$ annihilates all extensions with the same form as $V$.  In light of~\eqref{H Omega}, the preceding conjecture becomes $m_L = 2L+5$.  Example~3.5 of \cite{BF16} explains that for $L = 2$, this follows from \cite{FF80}.

\subsubsection*{Work of Petukhov and Sierra}
Finally, let us mention one result from \cite{PS20}, Theorem~1.7: any ideal in $\dU(x^2 \VR)$ containing a quadratic element is of finite Gel'fand-Kirillov codimension.  The $\VR$-analog would be that any ideal containing some $H_{2\ell}$ is of finite Gel'fand-Kirillov codimension.  Are there non-zero ideals containing no quadratic elements?  Consider Question~7.2 of \cite{CM07}: are the annihilators of the differential operator modules non-zero?  If so, they would be examples.

We also mention that \cite{PS20}, like \cite{SP23}, contains many interesting conjectures.

\def\eightit{\it} 
\def\bib{\bf}
\bibliographystyle{amsalpha}

\end{document}